\newtheoremstyle{myremark}     {10pt}{10pt}{}{}{\bfseries}{.}{.5em}{}
\newtheorem{thm}{Theorem}[section]
\newtheorem{cor}[thm]{Corollary}
\newtheorem{lem}[thm]{Lemma}
\newtheorem{pro}[thm]{Proposition}
\theoremstyle{definition}
\newtheorem{defn}[thm]{Definition}
\theoremstyle{myremark}
\newtheorem{rem}[thm]{Remark}
\begin{document}

\title{Stein's Method for Tempered Stable Distributions}

\author[Barman]{Kalyan Barman}
\address{\hskip-\parindent
Kalyan Barman, Department of Mathematics, IIT Madras,
Chennai - 600036, India.}

\email{barmankalyan.iitm@gmail.com}

\author[Upadhye]{Neelesh S Upadhye}

\address{\hskip-\parindent
Neelesh S Upadhye, Department of Mathematics, IIT Madras,
Chennai - 600036, India.}
\email{neelesh@iitm.ac.in}

\subjclass[2010]{60F05 $($Primary$)$\textbf{.} 62E17}

\keywords{Tempered stable distributions, Variance-gamma approximation, Stein's method, Characteristic function approach, Rate of convergence.}

\begin{abstract}
\noindent
In this article, we develop Stein characterization for two-sided tempered stable distribution. Stein characterizations for normal, gamma, Laplace, and variance-gamma distributions already known in the literature follow easily. One can also derive Stein characterizations for more difficult distributions such as the distribution of product of two normal random variables, a difference between two gamma random variables. Using the semigroup approach, we obtain estimates of the solution to Stein equation. Finally, we apply these estimates to obtain error bounds in the Wasserstein-type distance for tempered stable approximation in three well-known problems: comparison between two tempered stable distributions, Laplace approximation of random geometric sums, and six moment theorem for the symmetric variance-gamma approximation of functionals of double Wiener-It$\ddot{\text{o}}$ integrals. We also compare our results with the existing literature.  
\end{abstract}

\maketitle

\section{Introduction}\label{Sec:Introduction}

\noindent
Stein's method introduced by Charles Stein \cite{k2} is a powerful approach for deriving bounds for normal approximation. 
The method is based on the simple fact that, any real-valued random variable $Z$ has $\mathcal{N}(0,1)$ distribution, if and only if 
\begin{equation*}
\mathbb{E}\left(f^{\prime}(Z)-Zf(Z) \right)=0,
\end{equation*}

\noindent
where $f$ is any real-valued absolutely continuous function such that $\mathbb{E}|f^{\prime}(Z)|<\infty$. This characterization leads us to the Stein equation
\begin{equation}\label{normal2}
f^{\prime}(x)-xf(x)=h(x)-\mathbb{E}h(Z),
\end{equation}
\noindent
where $h$ is a real-valued test function. Replacing $x$ with a random variable $Y$ and taking expectations on both sides of  (\ref{normal2}) gives
\begin{equation}\label{normal3}
\mathbb{E}\left(f^{\prime}(Y)-Yf(Y) \right)=\mathbb{E}h(Y)-\mathbb{E}h(Z).
\end{equation}

\noindent
This equality (\ref{normal3}) plays a crucial role in Stein's method. The $\mathcal{N}(0,1)$ distribution is characterized by (\ref{normal2}) such that the problem of bounding the quantity $|\mathbb{E}h(Y)-\mathbb{E}h(Z)|$ depends on smoothness of the solution to (\ref{normal2}) (see Section 2.2 of \cite{chen}), and behavior of $Y$. For more details on Stein's method, we refer to the reader the monograph \cite{k25}.

\noindent
Over the years, Stein's method has become one of the most popular tool for deriving bounds on the distance between two distributions and approximations to other classical distributions (see, \cite{k3,k4,k6,k5}). Stein's method for various families of distributions is also a topic of keen interest for researchers (see, for example, Pearson \cite{k23}, variance-gamma \cite{N2,k24,kk2}, discrete Gibbs measure \cite{k18,k19} family).

\noindent	
Recently, Arras and Houdr\'e \cite{k0,k22}, Chen et. al. \cite{k16,k17}, Upadhye and Barman \cite{k1}, Xu \cite{k20} have developed Stein's method for stable distributions. It is clear from the above articles that the derivation of Stein's method for the family of stable distributions is not straightforward due to the lack of symmetry and heavy-tailed behavior of stable distributions. One of the major obstacles in developing the method is the moments of stable distribution do not exist whenever the stability parameter $\alpha \in (0,1]$. To overcome these issues, different approaches and various assumptions are used to derive Stein's method for the family of stable distributions.\vspace{2mm}

\noindent
Tempered stable distributions (TSD) were first introduced by Koponen \cite{koponen} by tempering the tail properties of the stable distributions. TSD has mean, variance, exponential moments, and each TSD converges weakly to the stable distribution, whenever the tempering parameters tend to zero. For more details on TSD, we refer to the reader \cite{kk5}. Therefore TSD is an interesting family of probability distributions for researchers in probability theory as well as financial mathematics, see \cite{k14,k15,k9,k10}.\vspace{2mm}


\noindent
 K$\ddot{\text{u}}$chlar and Tappe \cite{k13} define two-sided and one-sided TSD as a six-parameter and three-parameter family of probability distributions, respectively. Again, TSD include many sub-families of distributions, such as CGMY, KoBol, bilateral-gamma, also the variance-gamma distributions, and as the special or limiting cases, the normal, gamma, Laplace, product of two normal and difference of two gamma distributions. Researchers in probability theory have widely studied the Stein's method for normal  \cite{k2}, gamma \cite{k5}, Laplace \cite{pike}, product-normal \cite{kk3} and variance-gamma \cite{k24,kk2} distributions. Therefore, it is of interest to develop the Stein's method for TSD and see its relation for the distributions mentioned above.\vspace{2mm}

\noindent
In this article, we obtain a Stein characterization for two-sided TSD using the characteristic function (cf) approach. It enables us to give the Stein characterizations for normal, gamma, Laplace, product of two normal, difference of two gamma, and variance-gamma distributions from the existing literature. Further, it also enables us to give new Stein characterizations for truncated L\'evy flight, CGMY, KoBol, and bilateral-gamma distributions. Next, we prove the existence of an additive size bias distribution for the one-sided case of TSD, in particular, the gamma distribution. Using the semigroup approach, we solve our Stein equation. We also derive some interesting estimates of the solution to Stein equation. Finally, we apply our estimates to obtain error bounds in the Wasserstein-type distance for tempered stable approximation in three well-known problems: comparison between two TSD, Laplace approximation of random geometric sums, and six moment theorem for the symmetric variance-gamma approximation of functionals of double Wiener-It$\ddot{\text{o}}$ integrals. We also compare our results with the existing literature.  \vspace{2mm}

\noindent
The organization of this article is as follows. Section \ref{pre} introduces some notations and preliminaries. In Section \ref{mr}, we state our results and their relevance to the existing literature on Stein characterization for TSD, in particular, for a sub-family of TSD, namely, the variance-gamma distributions (VGD). We solve our Stein equation by the semigroup approach. We also find estimates of the solution to Stein equation. In Section \ref{PP2:application}, we discuss three applications of our results.

\section{ Notations and Preliminaries}\label{pre}

\noindent
In this section, we review some preliminaries and known results used to develop Stein's method for TSD. Let us first discuss the large family of distributions, namely TSD.

\subsection{Tempered stable distributions} We first define the TSD and its related properties.
\begin{defn}(\cite[p.2]{k13})\label{PP2:TSD} A random variable $X$ having cf 
\begin{equation}\label{e1}
\phi(z)=\exp\left(\int_{\mathbb{R}}(e^{izu}-1)\nu(du)  \right),~~z\in\mathbb{R},
\end{equation}
and the L\'evy measure
\begin{equation}\label{e2}
\nu(du)=\left(\frac{\alpha^{+}  }{u^{1+\beta^{+}}}e^{-\lambda^{+}u}\mathbf{1}_{(0,\infty)}(u)+\frac{\alpha^{-}  }{|u|^{1+\beta^{-}}}e^{-\lambda^{-}|u|}\mathbf{1}_{(-\infty,0)}(u)\right)du
\end{equation}
is said to follow two-sided TSD with parameters $\alpha^{+},\lambda^{+},\alpha^{-},\lambda^{-}\in(0,\infty)$, and $\beta^{+},\beta^{-}\in[0,1)$, and it is denoted by $X\sim \text{TSD} (\alpha^{+},\beta^{+},\lambda^{+};\alpha^{-},\beta^{-},\lambda^{-})$.
 \end{defn}

\begin{defn}(\cite[p.2]{k13})\label{PP2:TSD0}
A random variable $X$ having cf \eqref{e1} and the L\'evy measure $	\nu(du)=\frac{\alpha^{+}  }{u^{1+\beta^{+}}}e^{-\lambda^{+} u}\mathbf{1}_{(0,\infty)}(u)du$ is said to follow one-sided TSD with positive support and parameters $\alpha^{+},\lambda^{+}\in (0,\infty)$ and $\beta^{+}\in [0,1)$, and it is denoted by $X\sim \text{TSD}_1(\alpha^{+},\beta^{+}, \lambda^{+})$.
\end{defn}
\noindent
Observe that, for $\alpha^{-}\to0^{+}$, the Definition \ref{PP2:TSD} reduces to Definition \ref{PP2:TSD0} which is the limiting distribution of $X\sim\text{TSD} (\alpha^{+},\beta^{+},\lambda^{+};\alpha^{-},\beta^{-},\lambda^{-}).$

\begin{defn}(\cite[p.2]{k13})\label{PP2:TSD1}
	A random variable $X$ having cf \eqref{e1} and the L\'evy measure $	\nu(du)=\frac{\alpha^{-}  }{u^{1+\beta^{-}}}e^{-\lambda^{-} u}\mathbf{1}_{(-\infty,0)}(u)du$ is said to follow one-sided TSD with negative support and parameters $\alpha^{+},\lambda^{+}\in (0,\infty)$ and $\beta^{+}\in [0,1)$, and it is denoted by $X\sim \text{TSD}_2(\alpha^{-},\beta^{-}, \lambda^{-})$.
\end{defn}
\noindent
Observe that, for $\alpha^{+}\to0^{+}$, the Definition \ref{PP2:TSD} reduces to Definition \ref{PP2:TSD1} which is the limiting distribution of $X\sim\text{TSD} (\alpha^{+},\beta^{+},\lambda^{+};\alpha^{-},\beta^{-},\lambda^{-}).$



	
%
	

%
\noindent
In the following remark, we note some important properties of TSD.

\begin{rem}\label{p2}
	\begin{enumerate}
		\item[(i)] With an appropriate choice of parameters, TSD cover truncated L\'evy flight, CGMY, KoBol, variance-gamma, bilateral-gamma distributions and others in the existing literature (see, \cite{k13} for more details). 
		
		\item [(ii)]	Let $X\sim \text{TSD}_1(\alpha^{+},0, \lambda^{+})$ with parameters $\alpha^{+},\lambda^{+} \in (0,\infty)$. Then, $X\sim Gamma(\alpha^{+},\lambda^{+})$.
		
		\item [(iii)] Let $X\sim \text{TSD}_2(\alpha^{-},0, \lambda^{-})$ with parameters $\alpha^{-},\lambda^{-} \in (0,\infty)$. Then, $-X\sim Gamma(\alpha^{-},\lambda^{-})$.
		

		\item [(iv)] It is known that density function of stable distributions can not be written in closed form for each $\alpha\in(0,1)$, where $\alpha$ is the stability parameter (see, p.33, \cite{newref1}). Indeed, non-Gaussian stable distributions have heavy tails, and they are asymptotically equivalent to Pareto distribution (see, \cite{newref2}).
		\item[(v)] TSD are designed by tempering the tail properties of the stable distributions (see, Remark 2.3, \cite{k13}). However, the density function of TSD may not be available in closed form but, K$\ddot{u}$chler and Tappe [Section 7, \cite{k13}]	have shown the existence of density function for each TSD with ``nice" asymptotic properties (see, \cite[Proposition 7.2 and Theorem 7.7]{k13}).

		
	\end{enumerate}
\end{rem}

\subsection{Variance-gamma distributions}
\noindent
Next, we discuss an important subclass of TSD, namely VGD. Let us define the various characterizations for VGD in terms of its cf. 

\begin{defn}\label{vgddf1}(K$\ddot{u}$chlar and Tappe \cite{k13})
	 A random variable $X$ with cf given by (\ref{e1}), and the L\'evy measure
	$$\nu_{VGD}(du)=\left(\frac{\alpha  }{u}e^{-\lambda^{+}u}\mathbf{1}_{(0,\infty)}(u)+\frac{\alpha  }{|u|}e^{-\lambda^{-}|u|}\mathbf{1}_{(-\infty,0)}(u)\right)du
	$$is said to follow a VGD with parameters $\alpha,\lambda^{+},\lambda^{-}\in (0,\infty)$, and it is denoted by $X \sim\text{VGD}_0(\alpha,\lambda^{+},$
	$\lambda^{-}).$ 	
\end{defn}

\noindent
Note here that,  $\text{VGD}_0(\alpha,\lambda^{+},\lambda^{-})\overset{d}{=}\text{TSD}(\alpha,0,\lambda^{+},\alpha,0,\lambda^{-})$, where $\overset{d}{=}$ denotes equality in distribution.
\begin{defn}\label{vgddf2}(Finlay and Seneta \cite{kk1}) A random variable $X$ having cf
	\begin{equation}\label{eqvg}
	\phi_{VGD_{1}}(z)=\left(1-iz\left(\frac{1}{\lambda^{+}}-\frac{1}{\lambda^{-}}\right)+\frac{z^{2}}{\lambda^{+}\lambda^{-}}  \right)^{-\alpha}, ~~z\in\mathbb{R}
	\end{equation}
	is said to follow a VGD with parameters $\alpha,\lambda^{+},\lambda^{-}\in(0,\infty)$, and it is denoted by $X \sim \text{VGD}_1(\alpha,$
	$\lambda^{+},\lambda^{-})$.
\end{defn}

\begin{defn}\label{vgddf3}
	 A random variable $X$ having cf  
	\begin{equation}\label{cfvg2}
	\phi_{VGD_{2}}(z)=\left(1-i2\theta z+\sigma^{2}z^{2}\right)^{-\frac{r}{2}},~~z\in\mathbb{R}
	\end{equation}
	\noindent
	is said to follow a VGD with parameters  $\sigma^{2},r \in (0,\infty)$ and $ \theta\in\mathbb{R},$  and it is denoted by $X\sim \text{VGD}_{2}(\sigma^{2},r,\theta)$.
	
\end{defn}


\noindent
Definition \ref{vgddf3} is used later for obtaining a Stein identity for VGD. In the following remark, we discuss relationship of the above representations with each other.

\begin{rem}
	Note that, the cf representations (\ref{eqvg}) and (\ref{e1}) (for $\nu_{VGD}$ defined in Definition \ref{vgddf1}) are exactly same by suitably adjusting the parameters, and by using Frullani's improper integral \cite{frullani} formula. For more details about this integral, we refer the reader to Appendix \ref{app}. Again, substituting $\frac{1}{\lambda^{+}\lambda^{-}}=\sigma^{2}, \left(\frac{1}{\lambda^{+}}-\frac{1}{\lambda^{-}}\right)=2\theta,$ and $\alpha=\frac{r}{2}$ in (\ref{eqvg}), we get (\ref{cfvg2}).
\end{rem}

\noindent
Next, we list the special and limiting cases of VGD (see, \cite{k24} for more details).


\begin{enumerate}
	\item [(O1)] Let $\sigma^{2}>0$ and a random variable $X_r$ has distribution $ \text{VGD}_{2}(\frac{\sigma^{2}}{r},r,0)$ with cf (\ref{cfvg2}). Then, $X_r$ weakly converges to $N(0,\sigma^{2})$, whenever $r\to \infty$. 
	
	\item [(O2)] Let $\alpha,\lambda>0$ and a random variable $X_\sigma$ has distribution $ \text{VGD}_{2}(\sigma^{2},2\alpha,(2\lambda)^{-1})$ with cf (\ref{cfvg2}). Then, $X_\sigma$ weakly converges to $Gamma(\alpha,\lambda)$, whenever $\sigma\to 0$.
	
	\item [(O3)] Let $X\sim N(0,\sigma_X^{2})$ and $Y\sim N(0,\sigma_Y^{2})$ are two independent normal random variables. Then, $XY \sim \text{VGD}_2(\sigma_X^{2}\sigma_Y^{2},1,0)$.
	
	\item [(O4)] Let $\sigma^{2}>0$, then the distribution of $\text{VGD}_2(\sigma^{2},2,0)$ has $Laplace(0,\sigma^{2})$ distribution. 
\end{enumerate}

\subsection{Function spaces and probability metrics}\label{PP2:FS}
\noindent
Next, we define a function space and suitable probability metric required to develop Stein's method for TSD. Let $\mathcal{S}(\mathbb{R})$ be the Schwartz space defined by
$$\mathcal{S}(\mathbb{R}):=\left\{f\in C^\infty(\mathbb{R}): \lim_{|x|\rightarrow \infty} |x^m\frac{d^{n}}{dx^{n}}f(x)|=0, \text{ for all } m,n\in \mathbb{N}\right\},$$
\noindent
where $C^\infty(\mathbb{R})$ is the class of infinitely differentiable functions on $\mathbb{R}$. It is important to note that the Fourier transform on $\mathcal{S}(\mathbb{R})$ is automorphism onto itself. This enables us to identify the elements of dual space $\mathcal{S}^{*}(\mathbb{R})$ with $\mathcal{S}(\mathbb{R}).$ In particular, if $f \in \mathcal{S}(\mathbb{R})$, and $\widehat{f}(u)=\int_{\mathbb{R}}e^{-iux}f(x)dx,~~\text{  }u\in\mathbb{R},$ then $\widehat{f}(u)\in \mathcal{S}(\mathbb{R}).$ Similarly, if $\widehat{f}(u)\in \mathcal{S}(\mathbb{R})$, and $f(x)=\int_{\mathbb{R}}e^{iux}\widehat{f}(u)du,~~\text{  }x\in\mathbb{R},$
then $f(x)\in \mathcal{S}(\mathbb{R})$, see \cite{stein}.

\noindent
Finally, we define Wasserstein-type distance, see \cite{k0}. Let
 $$\mathcal{H}_r = \left\{h:\mathbb{R}\to \mathbb{R}\bigg|h \mbox{ is $r$ times differentiable and},\|h^{(k)}\|\leq 1, k =0, 1,\ldots, r \right\},$$ where $h^{(k)}$, $k=1, \ldots,r$, is the $k$-th derivative of $h$, with $h^{(0)}=h$ and $\|f\|=\sup_{x\in\mathbb{R}}|f(x)|$. Then, for any two random variables $Y$ and $Z$ the distance is given by
\begin{equation*}
	d_{W_r}(Y,Z):=\sup_{h \in \mathcal{H}_r}\left|\mathbb{E}[h(Y)]-\mathbb{E}[h(Z)]\right|.
\end{equation*}
\noindent
We use this distance for studying TSD approximation problems. Note that, $d_{W_{r}}$ has the following order relationship with the classical Wasserstein distance $W_1$.  
$$d_{W_{r}}(Y,Z)\leq d_{W_{1}}(Y,Z)\leq W_{1}(Y,Z)\leq W_{p}(Y,Z),~~r,p\geq 1.$$ 
\noindent
 We use this relationship and discuss the consequences of our results in Section \ref{PP2:application}.

\section{Results}\label{mr}
\noindent
In this section, we present components of Stein's method for TSD.

\subsection{Stein characterization}First, we present a Stein characterization for TSD. 
\begin{thm}\label{th1}
	Let $X\sim \text{TSD}(\alpha^{+},\beta^{+},\lambda^{+};\alpha^{-},\beta^{-},\lambda^{-})$. Then,
	\begin{equation}\label{PP2:StenIdTSD}
	\mathbb{E}\left(Xf(X)-\displaystyle\int_{\mathbb{R}}f(X+u)\nu(du) \right)=0,~~f\in\mathcal{S}(\mathbb{R}).  
	\end{equation}	
\end{thm}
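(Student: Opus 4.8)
The plan is to use the characteristic-function (Fourier) approach of \cite{k0,k1}: prove \eqref{PP2:StenIdTSD} first on the exponentials $e_z(x):=e^{izx}$, $z\in\mathbb{R}$, and then transfer it to all of $\mathcal S(\mathbb{R})$ by Fourier inversion. Since the Fourier transform is an automorphism of $\mathcal S(\mathbb{R})$ (Section~\ref{pre}), write $f(x)=\int_{\mathbb{R}}e^{izx}\,\widehat f(z)\,dz$ with $\widehat f\in\mathcal S(\mathbb{R})$; then, after interchanging $\mathbb E$, the $dz$-integral, and the integral against $\nu$,
\[
\mathbb E\!\left(Xf(X)-\int_{\mathbb{R}}f(X+u)\,u\,\nu(du)\right)=\int_{\mathbb{R}}\widehat f(z)\,\mathbb E\!\left(Xe_z(X)-\int_{\mathbb{R}}e_z(X+u)\,u\,\nu(du)\right)dz ,
\]
so it is enough to show that the inner expectation vanishes for every fixed $z$. (Here, and in \eqref{PP2:StenIdTSD}, the inner integral is to be understood against the finite signed measure $u\,\nu(du)$: this is what makes it converge --- recall $\nu$ itself puts infinite mass on every neighbourhood of the origin once $\beta^{\pm}=0$ --- and it is exactly the measure the computation below produces.)

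For fixed $z$ the inner identity is a one-line manipulation of the L\'evy exponent. Since TSD has a finite first moment (Section~\ref{Sec:Introduction}, \cite{k13}), differentiation under the expectation gives $\mathbb E[Xe_z(X)]=-i\,\phi'(z)$. By \eqref{e1}, $\phi=e^{\psi}$ with $\psi(z)=\int_{\mathbb{R}}(e^{izu}-1)\,\nu(du)$; differentiating under the $\nu$-integral --- legitimate since $\int_{\mathbb{R}}|u|\,\nu(du)<\infty$, with integrability near $0$ coming from $\beta^{\pm}\in[0,1)$ and at infinity from the tempering factors $e^{-\lambda^{\pm}|u|}$ in \eqref{e2} --- gives $\psi'(z)=i\int_{\mathbb{R}}u\,e^{izu}\,\nu(du)$, so $\mathbb E[Xe_z(X)]=-i\,\psi'(z)\,\phi(z)=\phi(z)\int_{\mathbb{R}}u\,e^{izu}\,\nu(du)$. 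On the other hand, using $e_z(X+u)=e^{izX}e^{izu}$ and $\int_{\mathbb{R}}|u|\,\nu(du)<\infty$, Fubini gives
\[
\mathbb E\!\left(\int_{\mathbb{R}}e_z(X+u)\,u\,\nu(du)\right)=\mathbb E[e^{izX}]\int_{\mathbb{R}}e^{izu}\,u\,\nu(du)=\phi(z)\int_{\mathbb{R}}u\,e^{izu}\,\nu(du),
\]
the same quantity. Hence the inner expectation is $0$ for every $z$, and substituting back into the first display completes the proof.

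I expect the only genuinely delicate point to be justifying the interchanges in the first display all at once; this is a single application of Fubini's theorem, since $\int_{\mathbb{R}}|\widehat f(z)|\,dz<\infty$ (as $\widehat f\in\mathcal S(\mathbb{R})\subset L^{1}(\mathbb{R})$), $\int_{\mathbb{R}}|u|\,\nu(du)<\infty$, $\mathbb E|X|<\infty$, and $|e^{izu}|=1$ together dominate every integrand that appears. The algebraic heart of the matter --- $-i\,\phi'=-i\,\psi'\,\phi$, with $\psi'$ equal (up to the factor $i$) to the Fourier transform of $u\,\nu(du)$ --- is immediate from the L\'evy--Khintchine representation \eqref{e1}.
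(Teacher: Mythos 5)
Your proof is correct and takes essentially the same route as the paper's own characteristic-function argument: the key identity $\phi^{\prime}(z)=i\phi(z)\int_{\mathbb{R}}ue^{izu}\nu(du)$ obtained by differentiating the L\'evy--Khintchine exponent, verified on the exponentials $e^{izx}$ and transferred to $\mathcal{S}(\mathbb{R})$ by Fourier inversion and Fubini --- the paper merely performs the inversion at the level of measures before pairing with $f$, whereas you expand $f$ first. Your remark that the inner integral must be read against $u\,\nu(du)$ is also well taken: the paper's own computation (see \eqref{PP2:e10}) produces exactly $\int_{\mathbb{R}}uf(X+u)\nu(du)$, and the factor $u$ is simply dropped in the displayed statement, which as written would diverge near $u=0$.
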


\begin{proof}

Recall first that, for $X\sim \text{TSD}(\alpha^{+},\beta^{+},\lambda^{+};\alpha^{-},\beta^{-},\lambda^{-})$, cf is given by \eqref{e1} with the L\'evy measure \eqref{e2}. Taking logarithms on both sides of (\ref{e1}), and differentiating with respect to $z$, we have




%


\begin{equation}\label{PP2:e4}
\phi^{\prime}(z)=i\int_{\mathbb{R}}ue^{izu}\nu(du)\phi(z).
\end{equation}

\noindent
Let $F_{X}$ be the distribution function (cumulative distribution function) of $X$. Then,

\begin{equation}\label{PP2:e5}
\phi(z)=\displaystyle\int_{\mathbb{R}}e^{izx}F_{X}(dx)~~\text{and}~~\phi^{\prime}(z)=i\displaystyle\int_{\mathbb{R}}xe^{izx}F_{X}(dx).
\end{equation}

\noindent
Using \eqref{PP2:e5} in \eqref{PP2:e4} and rearranging the integrals, we have

\begin{align}
\nonumber	0&=i\displaystyle\int_{\mathbb{R}}xe^{izx}F_{X}(dx)-i\int_{\mathbb{R}}ue^{izu}\nu(du)\phi(z)\\
	&=\displaystyle\int_{\mathbb{R}}xe^{izx}F_{X}(dx)-\int_{\mathbb{R}}ue^{izu}\nu(du)\phi(z)\label{PP2:e6}
\end{align}

\noindent
The second integral of \eqref{PP2:e6} can be written as

\begin{align}
	\nonumber \left(\int_{\mathbb R}ue^{izu}\nu(du)\right)\phi_X(z)&=\int_{\mathbb R}\int_{\mathbb R}ue^{izu}e^{izx}F_{X}(dx)\nu(du)\\
	\nonumber &=\int_{\mathbb R}\int_{\mathbb R}ue^{iz(u+x)}\nu(du)F_{X}(dx)\\
	\nonumber &=\int_{\mathbb R}\int_{\mathbb R}ue^{izy}\nu(du)F_{X}(d(y-u))\\
	\nonumber &=\int_{\mathbb R}\int_{\mathbb R}ue^{izx}\nu(du)F_{X}(d(x-u))\\ 
	&=\int_{\mathbb R}e^{izx}\int_{\mathbb R}uF_{X}(d(x-u))\nu(du).\label{PP2:e7}
\end{align}

\noindent
Substituting \eqref{PP2:e7} in \eqref{PP2:e6}, we have

\begin{align}
\nonumber	0&=\displaystyle\int_{\mathbb{R}}xe^{izx}F_{X}(dx)-\int_{\mathbb R}e^{izx}\int_{\mathbb R}uF_{X}(d(x-u))\nu(du)\\
	&=\displaystyle\int_{\mathbb{R}}e^{izx} \left( xF_{X}(dx)-\int_{\mathbb R}uF_{X}(d(x-u))\nu(du) \right) \label{PP2:e8}
\end{align}

\noindent
On applying Fourier transform to \eqref{PP2:e8}, multiplying with $f\in \mathcal{S}(\mathbb{R}),$ and integrating over $\mathbb{R},$ we get

\begin{align}\label{PP2:e9}
	\displaystyle\int_{\mathbb{R}}f(x) \left( xF_{X}(dx)-\int_{\mathbb R}uF_{X}(d(x-u))\nu(du) \right)=0.
\end{align}

\noindent
The second integral of \eqref{PP2:e9} can be seen as

\begin{align}
	\nonumber \int_{\mathbb R}\int_{\mathbb R}uf(x)F_{X}(d(x-u))\nu(du)&=\int_{\mathbb R}\int_{\mathbb R}uf(y+u)F_{X}(dy)\nu(du)\\ 
	\nonumber &=\int_{\mathbb R}\int_{\mathbb R}uf(x+u)F_{X}(dx)\nu(du)\\
	&=\mathbb{E}\left(\int_{\mathbb R}uf(X+u)\nu(du)\right).\label{PP2:e10}
\end{align}

\noindent
Substituting \eqref{PP2:e10} in \eqref{PP2:e9}, we have

\begin{align*}
	\mathbb{E}\left(Xf(X)-\displaystyle\int_{\mathbb{R}}f(X+u)\nu(du) \right)=0.
\end{align*}

\noindent
Hence the theorem is proved.
\end{proof}

\begin{rem}
	
	%
	One can also prove the converse of Theorem \ref{th1} by choosing $f(x)=e^{isx},$ where $s,x \in \mathbb{R}$ in (\ref{PP2:StenIdTSD}). We refer to the reader Appendix \ref{app} for proof of the converse of Theorem \ref{th1}. We derive the characterizing (Stein) identity (\ref{PP2:StenIdTSD}) for TSD using the L\'evy-Khinchine representation of the cf. Also, observe that several classes of distributions such as variance-gamma, bilateral-gamma, CGMY, and KoBol can be viewed as TSD. Stein characterization for these classes of distributions can be easily derived using (\ref{PP2:StenIdTSD}).
	
\end{rem}

\noindent
Note that, from Definition \ref{vgddf1} and Theorem \ref{th1}, for $f\in\mathcal{S}(\mathbb{R})$ a Stein identity for VGD$_0(\alpha,\lambda^{+},\lambda^{-})$ is 

\begin{align}
	\nonumber\mathbb{E}Xf(X)&=\mathbb{E}\left(\displaystyle\int_{\mathbb{R}}f(X+u)\nu_{\text{VGD}}(du) \right)\\
	&=\alpha\mathbb{E}\displaystyle\int_{0}^{\infty}\left(e^{-\lambda^{+}u}f(X+u) -e^{-\lambda^{-}u}f(X-u) \right)du.\label{PP2:e11}
\end{align}

\noindent
Next, we establish a Stein characterization for VGD$_{2}(\sigma^{2},r,\theta)$.
\begin{cor}\label{corvg}
	Let $X\sim VGD_{2}(\sigma^{2},r,\theta)$  with cf (\ref{cfvg2}). Then,
	\begin{equation}\label{PP2:SteinIdVGD}
	\mathbb{E}\left(\sigma^{2}Xf^{\prime\prime}(X)+\left(\sigma^{2}r+2\theta X \right)f^{\prime}(X)+\left(r\theta-X \right)f(X)  \right)=0,~~f\in\mathcal{S}(\mathbb{R}).
	\end{equation}
\end{cor}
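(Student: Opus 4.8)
The plan is to obtain \eqref{PP2:SteinIdVGD} from the integral Stein identity \eqref{PP2:e11} for $\mathrm{VGD}_0$ (itself an immediate consequence of Theorem \ref{th1}) by converting the integral operator into the stated second order differential operator via repeated integration by parts. First I would fix the change of parameters: by Definition \ref{vgddf3} and the remark following it, a random variable $X\sim\mathrm{VGD}_2(\sigma^2,r,\theta)$ has the same law as a $\mathrm{VGD}_0(r/2,\lambda^+,\lambda^-)$ variable, where $\lambda^+,\lambda^->0$ are uniquely determined by $\lambda^+\lambda^-=\sigma^{-2}$ and $1/\lambda^+-1/\lambda^-=2\theta$ (explicitly $\lambda^+=(\sqrt{\theta^2+\sigma^2}+\theta)^{-1}$ and $\lambda^-=(\sqrt{\theta^2+\sigma^2}-\theta)^{-1}$), so in particular $\lambda^--\lambda^+=2\theta\lambda^+\lambda^-=2\theta/\sigma^2$. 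Thus \eqref{PP2:e11} holds with $\alpha=r/2$ and these $\lambda^{\pm}$; and, since $\mathcal{S}(\mathbb{R})$ is closed under differentiation, it also holds with $f$ replaced by $f'$ and by $f''$.

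Next I would set $g(x)=\int_0^\infty e^{-\lambda^+u}f(x+u)\,du$ and $h(x)=\int_0^\infty e^{-\lambda^-u}f(x-u)\,du$, both bounded since $f$ is. Integrating by parts in $u$ — the boundary terms at $u=+\infty$ vanish thanks to the exponential weight, and the ones at $u=0$ contribute $\mp f(x)$ — gives
\[
\int_0^\infty e^{-\lambda^+u}f'(x+u)\,du=-f(x)+\lambda^+g(x),\qquad \int_0^\infty e^{-\lambda^-u}f'(x-u)\,du=f(x)-\lambda^-h(x),
\]
and applying the same two identities to $f'$ in place of $f$,
\[
\int_0^\infty e^{-\lambda^+u}f''(x+u)\,du=-f'(x)-\lambda^+f(x)+(\lambda^+)^2g(x),\qquad \int_0^\infty e^{-\lambda^-u}f''(x-u)\,du=f'(x)-\lambda^-f(x)+(\lambda^-)^2h(x).
\]
Substituting these into \eqref{PP2:e11} applied successively to $f$, $f'$, $f''$ yields
\begin{align*}
\mathbb{E}[Xf(X)]&=\tfrac{r}{2}\,\mathbb{E}\big[g(X)-h(X)\big],\\
\mathbb{E}[Xf'(X)]&=\tfrac{r}{2}\,\mathbb{E}\big[-2f(X)+\lambda^+g(X)+\lambda^-h(X)\big],\\
\mathbb{E}[Xf''(X)]&=\tfrac{r}{2}\,\mathbb{E}\big[-2f'(X)+(\lambda^--\lambda^+)f(X)+(\lambda^+)^2g(X)-(\lambda^-)^2h(X)\big].
\end{align*}

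Finally I would eliminate $\mathbb{E}[g(X)]$ and $\mathbb{E}[h(X)]$. Down the three lines the $g$-coefficients are $\tfrac{r}{2}$ times $1,\lambda^+,(\lambda^+)^2$ and the $h$-coefficients are $\tfrac{r}{2}$ times $-1,\lambda^-,-(\lambda^-)^2$; since $\lambda^+$ and $-\lambda^-$ are exactly the roots of $t^2+(\lambda^--\lambda^+)t-\lambda^+\lambda^-=t^2+\tfrac{2\theta}{\sigma^2}t-\tfrac{1}{\sigma^2}$, the combination ``(third line) $+\tfrac{2\theta}{\sigma^2}(\text{second line})-\tfrac{1}{\sigma^2}(\text{first line})$'' annihilates both. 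Its right-hand side reduces, using $\lambda^--\lambda^+=2\theta/\sigma^2$, to $\mathbb{E}\big[-rf'(X)-\tfrac{r\theta}{\sigma^2}f(X)\big]$, so
\[
\mathbb{E}[Xf''(X)]+\tfrac{2\theta}{\sigma^2}\mathbb{E}[Xf'(X)]-\tfrac{1}{\sigma^2}\mathbb{E}[Xf(X)]=-r\,\mathbb{E}[f'(X)]-\tfrac{r\theta}{\sigma^2}\mathbb{E}[f(X)];
\]
multiplying through by $\sigma^2$ and collecting terms gives precisely \eqref{PP2:SteinIdVGD}. All the expectations above are finite because $\mathrm{VGD}_2(\sigma^2,r,\theta)$, being a TSD, has a finite first moment while $f,f',f''$ are bounded, and the integration-by-parts boundary terms vanish because $f\in\mathcal{S}(\mathbb{R})$. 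I expect the only real obstacle to be the bookkeeping — getting the four integration-by-parts identities and the final linear combination exactly right — since all the conceptual content is already packaged in \eqref{PP2:e11}; alternatively, one could bypass \eqref{PP2:e11} entirely and feed the logarithmic-derivative identity $(1-2i\theta z+\sigma^2z^2)\phi'(z)=(ri\theta-r\sigma^2z)\phi(z)$ for the cf \eqref{cfvg2} through the Fourier-inversion argument used in the proof of Theorem \ref{th1}, which leads to the same differential identity.
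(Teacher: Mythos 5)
Your proposal is correct and follows essentially the same route as the paper: both derive \eqref{PP2:SteinIdVGD} from the integral identity \eqref{PP2:e11} applied to $f$, $f'$, $f''$, converting the exponential integrals via integration by parts and then matching parameters through $\lambda^{+}\lambda^{-}=\sigma^{-2}$, $1/\lambda^{+}-1/\lambda^{-}=2\theta$, $\alpha=r/2$. Your presentation merely reorganizes the same algebra (eliminating $\mathbb{E}[g(X)]$ and $\mathbb{E}[h(X)]$ by the linear combination whose coefficients come from the quadratic with roots $\lambda^{+}$ and $-\lambda^{-}$, rather than substituting \eqref{PP2:e12}--\eqref{PP2:e13} directly into \eqref{PP2:e14}), and your computations check out.
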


\begin{proof} Applying integration by parts formula twice on the right hand side of \eqref{PP2:e11} and suitably adjusting the integrals, we have

\begin{align}
\nonumber	\mathbb{E}Xf(X)&=\alpha\left(\frac{1}{\lambda^{+}}-\frac{1}{\lambda^{-}}\right)\mathbb{E}f(X)\\
\nonumber	&+\alpha\left(\frac{1}{\lambda^{+}}-\frac{1}{\lambda^{-}}\right)\mathbb{E}\displaystyle\int_{0}^{\infty}\left(e^{-\lambda^{+}u}f^{\prime}(X+u) -e^{-\lambda^{-}u}f^{\prime}(X-u) \right)du\\
\nonumber	&+\frac{\alpha}{\lambda^{-}}\mathbb{E}\displaystyle\int_{0}^{\infty}e^{-\lambda^{+}u}f^{\prime}(X+u)du+\frac{\alpha}{\lambda^{+}}\mathbb{E}\displaystyle\int_{0}^{\infty}e^{-\lambda^{-}u}f^{\prime}(X-u)du\\
\nonumber	&=\alpha\left(\frac{1}{\lambda^{+}}-\frac{1}{\lambda^{-}}\right)\mathbb{E}f(X)\\
\nonumber	&+\alpha\left(\frac{1}{\lambda^{+}}-\frac{1}{\lambda^{-}}\right)\mathbb{E}\displaystyle\int_{0}^{\infty}\left(e^{-\lambda^{+}u}f^{\prime}(X+u) -e^{-\lambda^{-}u}f^{\prime}(X-u) \right)du\\
	&+\frac{2\alpha}{\lambda^{+}\lambda^{-}}\mathbb{E}f^{\prime}(X)+\frac{\alpha}{\lambda^{+}\lambda^{-}}\mathbb{E}\displaystyle\int_{0}^{\infty}\left(e^{-\lambda^{+}u}f^{\prime\prime}(X+u) -e^{-\lambda^{-}u}f^{\prime\prime}(X-u) \right)du.\label{PP2:e14}
\end{align}


\noindent
Next observe that
\begin{subequations}
	\begin{align}\label{PP2:e12}
		(a)~~\mathbb{E}Xf^{\prime}(X)&=\alpha\mathbb{E}\displaystyle\int_{0}^{\infty}\left(e^{-\lambda^{+}u}f^{\prime}(X+u) -e^{-\lambda^{-}u}f^{\prime}(X-u) \right)du,\\
		\label{PP2:e13} (b)~~\mathbb{E}Xf^{\prime\prime}(X)&=\alpha\mathbb{E}\displaystyle\int_{0}^{\infty}\left(e^{-\lambda^{+}u}f^{\prime\prime}(X+u) -e^{-\lambda^{-}u}f^{\prime\prime}(X-u) \right)du, 
	\end{align}
\end{subequations}
\noindent
as $f\in\mathcal{S}(\mathbb{R})$. Now, applying \eqref{PP2:e12} and \eqref{PP2:e13} on \eqref{PP2:e14}, we get

$$\mathbb{E}\left(\frac{1}{\lambda^{+}\lambda^{-}}Xf^{\prime\prime}(X)+\left(\frac{2\alpha}{\lambda^{+}\lambda^{-}}+\Lambda X \right)f^{\prime}(X)+\left(\alpha\Lambda-X \right)f(X)  \right)=0,$$

\noindent
where $\Lambda=\left(\frac{1}{\lambda^{+}}-\frac{1}{\lambda^{-}}\right),~~ f\in\mathcal{S}(\mathbb{R}).$ Setting the parameters
$$\frac{1}{\lambda^{+}\lambda^{-}}=\sigma^{2},~~  \Lambda=\left(\frac{1}{\lambda^{+}}-\frac{1}{\lambda^{-}}\right)=2\theta,~~\alpha=\frac{r}{2},$$
\noindent
we get the our desired conclusion. 
\end{proof}

\noindent
Next, we compare our characterization with some well-known Stein characterizations in literature.
\begin{rem}
	\begin{enumerate}
		\item [(i)] Our Stein characterization matches exactly with Stein characterization given in Gaunt \cite{k24}, whenever the location parameter $\mu=0$. In general, Gaunt \cite{k24} uses the density approach developed in \cite{den}, and the density of VGD is usually written in terms of modified Bessel function. Therefore, the derivation of Stein characterization using density approach is quite lengthy (see \cite{k24}). However, we show that using cf approach, the derivation of Stein characterization is quick and easy to understand.

		\item [(ii)] We also observe that a Stein identity for $VGD_{2}(\frac{\sigma^{2}}{r},r,0)$ is given by
		$$\mathbb{E}\left(\frac{\sigma^{2}}{r}Xf^{\prime\prime}(X)+\sigma^{2}f^{\prime}(X)-Xf(X) \right)=0,$$
		which in the limit $r\to\infty$ is the Stein identity for classical $\mathcal{N}(0,\sigma^{2}).$
		
		\item [(iii)] Taking $r=1,\sigma^{2}=\sigma_X^{2}\sigma_Y^{2}~~\text{and}~~\theta=0$, the Stein identity (\ref{PP2:SteinIdVGD}) reduces to
		$$\mathbb{E}\left(\sigma_X^{2}\sigma_Y^{2}\left(Xf^{\prime\prime}(X)+f^{\prime}(X)  \right)-Xf(X)  \right)=0,$$
		\noindent
		which is the Stein identity for products of independent $\mathcal{N}(0,\sigma_X^{2})$ and $\mathcal{N}(0,\sigma_Y^{2})$, see \cite{key1}.  
		
		\item [(iv)] We can also deduce Stein identities for symmetrized-gamma or symmetric case of variance-gamma, Laplace, gamma distributions using Corollary \ref{corvg}.
		%
		%
	\end{enumerate}
\end{rem}

\noindent
Next, we state a corollary for one-sided TSD, which provides a Stein characterization for gamma distribution.

\begin{cor}\label{cor1}
	Let $X\sim TSD_1(\alpha,0,\lambda)$. Then,
	
	\begin{equation}
	\mathbb{E}Xf(X)=\mathbb{E}X\mathbb{E}\left(f(X)+\frac{1}{\lambda}f^{\prime}(X+Y)\right),~~ f\in\mathcal{S}(\mathbb{R}),
	\end{equation}	
	\noindent
	where $Y$ is a random variable having exponential distribution with parameter $\lambda$, independent of X.	
\end{cor}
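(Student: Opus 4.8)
The plan is to specialize the general Stein identity \eqref{PP2:StenIdTSD} to the one-sided case $X \sim \text{TSD}_1(\alpha,0,\lambda)$, where the L\'evy measure collapses to $\nu(du) = \frac{\alpha}{u}e^{-\lambda u}\mathbf{1}_{(0,\infty)}(u)\,du$, and then recognize the resulting integral as an expectation against an exponential density. Concretely, from Theorem \ref{th1} we have
\begin{equation*}
\mathbb{E}Xf(X) = \mathbb{E}\int_0^\infty \frac{\alpha}{u}e^{-\lambda u} f(X+u)\,du, \qquad f \in \mathcal{S}(\mathbb{R}).
\end{equation*}
The factor $1/u$ is the obstacle to reading this off directly as an exponential expectation, so the first real step is to remove it. The natural device is a Frullani-type manipulation: write $f(X+u) - f(X) = \int_0^u f'(X+t)\,dt$ and split the integral. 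Since $\int_0^\infty \frac{\alpha}{u}e^{-\lambda u}\,du$ diverges, one must keep the $f(X+u)-f(X)$ combination together; here the subtle point is that $\mathbb{E}X = \mathbb{E}\int_0^\infty \frac{\alpha}{u}e^{-\lambda u}\,du$ does not converge either, but the combination $\mathbb{E}X f(X) - (\mathbb{E}X)\,\mathbb{E}f(X)$ should be interpreted via the same regularization — i.e.\ one establishes the identity in the form $\mathbb{E}X f(X) - (\mathbb{E}X)\,\mathbb{E}f(X) = \mathbb{E}\int_0^\infty \frac{\alpha}{u}e^{-\lambda u}\bigl(f(X+u) - f(X)\bigr)\,du$, noting $\mathbb{E}X = \alpha/\lambda$ for the $\text{TSD}_1(\alpha,0,\lambda) = \text{Gamma}(\alpha,\lambda)$ distribution (Remark \ref{p2}(ii)).

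The second step is to evaluate $\int_0^\infty \frac{\alpha}{u}e^{-\lambda u}\bigl(f(X+u)-f(X)\bigr)\,du$. Substituting $f(X+u) - f(X) = \int_0^u f'(X+t)\,dt$ and applying Tonelli/Fubini to exchange the order of integration in $(t,u)$ over the region $0 < t < u < \infty$, the inner integral becomes $\int_t^\infty \frac{\alpha}{u}e^{-\lambda u}\,du$... but this still has the troublesome $1/u$. The cleaner route is instead to integrate by parts in $u$: with $g(u) = f(X+u) - f(X)$ (so $g(0)=0$) and using that $\frac{d}{du}\bigl(\text{something}\bigr)$... Actually the most transparent approach uses the identity $\frac{1}{u} = \int_0^\infty e^{-us}\,ds$, giving
\begin{equation*}
\int_0^\infty \frac{\alpha}{u}e^{-\lambda u}\bigl(f(X+u)-f(X)\bigr)\,du = \alpha\int_0^\infty\!\!\int_0^\infty e^{-(\lambda+s)u}\bigl(f(X+u)-f(X)\bigr)\,du\,ds,
\end{equation*}
then integrating by parts in $u$ turns $e^{-(\lambda+s)u}(f(X+u)-f(X))$ into $\frac{1}{\lambda+s}\int_0^\infty e^{-(\lambda+s)u} f'(X+u)\,du$, after which the $s$-integral $\int_0^\infty \frac{ds}{(\lambda+s)^2} = 1/\lambda$ pulls out; one is left with $\frac{\alpha}{\lambda}\cdot\lambda\int_0^\infty e^{-\lambda u} f'(X+u)\,du = \frac{\alpha}{\lambda}\mathbb{E}\bigl(f'(X+Y)\bigr)$ where $Y \sim \text{Exp}(\lambda)$ independent of $X$ (since $\lambda e^{-\lambda u}$ is its density). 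Combining with $\mathbb{E}X = \alpha/\lambda$ yields
\begin{equation*}
\mathbb{E}Xf(X) = \frac{\alpha}{\lambda}\,\mathbb{E}f(X) + \frac{\alpha}{\lambda}\,\mathbb{E}f'(X+Y) = \mathbb{E}X\,\mathbb{E}\Bigl(f(X) + \tfrac{1}{\lambda}f'(X+Y)\Bigr),
\end{equation*}
which is the claim.

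The main obstacle, as flagged above, is handling the non-integrable weight $1/u$ rigorously: one must justify all the Fubini exchanges and the integration by parts while only ever working with the convergent combination $f(X+u)-f(X)$ (which is $O(u)$ near $0$ for $f \in \mathcal{S}(\mathbb{R})$, killing the $1/u$ singularity) and using the exponential decay $e^{-\lambda u}$ plus the boundedness of $f, f'$ to control the tail. A secondary point is to record that $\mathbb{E}X = \alpha/\lambda$ is finite for this distribution (Remark \ref{p2}(ii) identifies it as $\text{Gamma}(\alpha,\lambda)$), which is what makes the statement meaningful; the general $\text{TSD}$ identity of Theorem \ref{th1} was stated without moment assumptions, but here the one-sided tempering guarantees a first moment. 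Everything else is routine, and I expect the proof in the paper to follow essentially this Frullani-integral route, perhaps packaged via the appendix's Frullani formula rather than the explicit $1/u = \int_0^\infty e^{-us}\,ds$ trick.
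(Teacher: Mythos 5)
There is a genuine gap, and it begins at your very first display. The Stein identity of Theorem \ref{th1}, as actually derived in its proof (see \eqref{PP2:e10}) and as used everywhere afterwards (see \eqref{PP2:e11} and, in the paper's own proof of this corollary, \eqref{eq19}), carries a factor of $u$ inside the integral against $\nu(du)$: the identity is $\mathbb{E}Xf(X)=\mathbb{E}\int_{\mathbb{R}}u\,f(X+u)\,\nu(du)$ (the statement of Theorem \ref{th1} omits the $u$, but that is a typo). For $\nu(du)=\frac{\alpha}{u}e^{-\lambda u}\mathbf{1}_{(0,\infty)}(u)\,du$ this factor cancels the $1/u$ exactly, giving the convergent identity $\mathbb{E}Xf(X)=\alpha\,\mathbb{E}\int_0^\infty e^{-\lambda u}f(X+u)\,du$, from which one integration by parts and $\mathbb{E}X=\alpha/\lambda$ give the corollary; that is the paper's entire proof. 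By taking the theorem's statement literally you start from a divergent expression and are forced into a Frullani-type regularization, but the regularized identity you propose, namely $\mathbb{E}Xf(X)-(\mathbb{E}X)\mathbb{E}f(X)=\mathbb{E}\int_0^\infty\frac{\alpha}{u}e^{-\lambda u}\bigl(f(X+u)-f(X)\bigr)du$, is false. Test it with $f(x)=e^{izx}$ and $\phi(z)=(1-iz/\lambda)^{-\alpha}$: the left side equals $\alpha\phi(z)\frac{iz}{\lambda(\lambda-iz)}$, while the right side equals $\alpha\phi(z)\log\frac{\lambda}{\lambda-iz}$ by Frullani's formula, and these do not agree.

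There is a second, independent error in the evaluation. After writing $1/u=\int_0^\infty e^{-us}\,ds$ and integrating by parts you arrive at $\alpha\int_0^\infty\frac{1}{\lambda+s}\int_0^\infty e^{-(\lambda+s)u}f^{\prime}(X+u)\,du\,ds$; the inner $u$-integral still depends on $s$ through $e^{-(\lambda+s)u}$, so the $s$-integral cannot be ``pulled out'' as $\int_0^\infty(\lambda+s)^{-2}ds=1/\lambda$ (that would only be valid if $f^{\prime}$ were constant). Exchanging the order correctly yields $\alpha\int_0^\infty f^{\prime}(X+u)E_1(\lambda u)\,du$, with $E_1$ the exponential integral, which is not $\frac{\alpha}{\lambda}\mathbb{E}\bigl(f^{\prime}(X+Y)\bigr)$. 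Your two mistakes happen to conspire to land on the known answer, but neither step is valid. The fix is to discard the regularization entirely: rerun the proof of Theorem \ref{th1} for the one-sided L\'evy measure keeping the factor $u$ to obtain \eqref{eq19}, note $\mathbb{E}X=\alpha/\lambda$ by Remark \ref{p2}(ii), and integrate by parts once, recognizing $\lambda e^{-\lambda u}$ as the density of $Y$.
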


\begin{proof}
	\noindent
	Let $X\sim TSD_1(\alpha,0,\lambda)$ with $\alpha,\lambda>0$. Then by Remark \ref{p2}, $X$ has the gamma distribution with parameters $\alpha$ and $\lambda$. Following steps similar to the proof of Theorem \ref{th1}, one can find a Stein identity for $ TSD_1(\alpha,0,\lambda)$ in the form
	
	\begin{equation}\label{eq19}
	\mathbb{E}Xf(X)=\alpha\mathbb{E}\left(\displaystyle\int_{0}^{\infty}e^{-\lambda u}f(X+u)du\right),~~f\in \mathcal{S}(\mathbb{R}).
	\end{equation}

	\noindent
	Note that, $\mathbb{E}X=\frac{\alpha}{\lambda}$. Applying integration by parts formula on the right hand side of (\ref{eq19}), we have
	\begin{align*}
	\mathbb{E}Xf(X)&=\mathbb{E}\left(\frac{\alpha}{\lambda}f(X)+\frac{\alpha}{\lambda} \displaystyle\int_{0}^{\infty}e^{-\lambda u}f^{\prime}(X+u)du \right)\\
	&=\mathbb{E}X\mathbb{E}\left(f(X)+ \displaystyle\int_{0}^{\infty}e^{-\lambda u}f^{\prime}(X+u)du \right)\\
	&=\mathbb{E}X\mathbb{E}\left(f(X)+\frac{1}{\lambda}f^{\prime}(X+Y)\right),
	\end{align*}
	
	\noindent
	where $Y$ is exponential random variable with parameter $\lambda$, independent of $X$.

	\noindent
	Hence the result.
\end{proof}

\begin{rem}
	\begin{enumerate}
		\item [(i)]Note that Corollary \ref{cor1} claims the existence of an additive exponential size-bias (see, \cite{chen}) distribution for the gamma distribution.
		
		\item [(ii)]The Stein characterization for gamma distribution is first introduced by Luk (\cite[Subsection 2.2]{k5}) using Barbour generator approach \cite{k8} without additive size-bias distribution. The Stein identity given in ( \cite[Lemma 2.9]{k5}) is for $\chi^{2}_{2(n+1)}$ distribution with additive size-bias distribution. Under the assumptions of Luk \cite{k5}, both identities can be retrieved from Corollary \ref{cor1}.

	\end{enumerate}

\end{rem}

\subsection{Stein equation}
\noindent
Note that, from Theorem \ref{th1}, for any $f \in \mathcal{S}(\mathbb{R})$, $\mathcal{A}_{X}(f)(x):=-xf(x)+\displaystyle\int_{\mathbb{R}}f(x+u)\nu(du)$ is a Stein operator for TSD. Observe also that, $\mathcal{A}_{X}$ is an integral operator, where domain of the operator is $\mathcal{F}_X=\overline{\mathcal{S}(\mathbb{R})}$ (see, \cite{k20} for more details). For more general discussion on domain of operators, we refer the reader to \cite{stein} and references therein. As mentioned in Section \ref{Sec:Introduction}, the next step in Stein's method is to set a Stein equation. For any $X\sim\text{TSD}(\alpha^{+},\beta^{+},\lambda^{+};\alpha^{-},\beta^{-},\lambda^{-})$ and $h\in\mathcal{H}_{r}$ (see, Subsection \ref{PP2:FS}) with $\mathbb{E}h(X)<\infty$, a Stein equation for TSD is given by
\begin{equation}\label{PP2:e15}
	\mathcal{A}_{X}(f)(x)= h(x)-\mathbb{E}(h(X)).
\end{equation}




\noindent
To solve \eqref{PP2:e15}, we apply the semigroup approach. The semigroup approach for solving the Stein equation is developed by Barbour \cite{k8}, and Arras and Houdr\'e \cite{k0} generalized it for infinitely divisible distributions with the finite first moment. Following Barbour's approach \cite{k8}, we choose a family of operators $(P_{t})_{t\geq0}$, for all $x\in\mathbb{R}$, as
\begin{equation}\label{PP2:e16}
	P_{t}(f)(x)=\frac{1}{2\pi}\int_{\mathbb{R}}\hat{f}(\xi)e^{i\xi xe^{-t}}\frac{\phi(\xi)}{\phi(e^{-t}\xi)}d\xi, ~~f\in\mathcal{F}_{X}.
\end{equation} 
\noindent
 Note here that, one can define a cf, for all $z\in \mathbb{R},$ and $t\geq 0,$ by
\begin{align}\label{PP2:a15}
	\phi_t(z):=\frac{\phi(z)}{\phi(e^{-t}z)}=\displaystyle\int_{\mathbb{R}}e^{iz u}F_{X_{(t)}}(du),
\end{align}
\noindent
where $F_{X_{(t)}}$ is the distribution function of $X_{(t)}$ and $\phi$ is the cf of TSD given in \eqref{e1}. The property given in \eqref{PP2:a15} is also known as self-decomposability (see, \cite{sato}). Using this property, we get
\begin{align}
	\nonumber  P_t(f)(x)&=\frac{1}{2\pi}\int_{\mathbb R}\int_{\mathbb{R}}\widehat{f}(z)e^{iz xe^{-t}}e^{iz u}F_{X_{(t)}}(du)dz\\
	\nonumber  &=\frac{1}{2\pi}\int_{\mathbb R}\int_{\mathbb{R}}\widehat{f}(z)e^{iz (u+xe^{-t})}F_{X_{(t)}}(du)dz\\
	&=\displaystyle\int_{\mathbb{R}}f(u+xe^{-t})F_{X_{(t)}}(du),\label{PP2:a17}
\end{align}
\noindent
where the last step follows by applying inverse Fourier transform.


\begin{pro}\label{PP2:proSem}
	The family of operators $(P_{t})_{t\geq 0}$ given in \eqref{PP2:e16} is a $\mathbb{C}_0$-semigroup on $\mathcal{F}_{X}$.
\end{pro}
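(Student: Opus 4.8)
The plan is to check the three defining properties of a $\mathbb{C}_0$-semigroup — $P_0=\mathrm{Id}$, the flow relation $P_{s+t}=P_sP_t$, and strong continuity at $t=0$ — working throughout with the probabilistic form \eqref{PP2:a17}, that is $P_t(f)(x)=\mathbb{E}\big[f(xe^{-t}+X_{(t)})\big]$ where $X_{(t)}$ has characteristic function $\phi_t$ as in \eqref{PP2:a15}. Two easy preliminaries: each $P_t$ is a linear contraction on $\mathcal{F}_X=\overline{\mathcal{S}(\mathbb{R})}$ since $|P_t(f)(x)|\le\|f\|$, and $P_t$ maps $\mathcal{F}_X$ into itself (continuity of $P_t(f)$ and $P_t(f)(x)\to0$ as $|x|\to\infty$ — for $f$ in the closure of $\mathcal{S}(\mathbb{R})$, hence vanishing at infinity — both follow from dominated convergence using $e^{-t}>0$). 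Finally, $P_0=\mathrm{Id}$ is immediate from \eqref{PP2:a17} since $X_{(0)}\equiv0$, or directly from Fourier inversion in \eqref{PP2:e16}.

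The core of the proof is the flow relation, and this is where self-decomposability is used. Writing the L\'evy density in \eqref{e2} as $k(u)/|u|$ with $k(u)=\alpha^{+}u^{-\beta^{+}}e^{-\lambda^{+}u}$ on $(0,\infty)$ and $k(u)=\alpha^{-}|u|^{-\beta^{-}}e^{-\lambda^{-}|u|}$ on $(-\infty,0)$, one checks that $k$ is non-increasing on $(0,\infty)$ and non-decreasing on $(-\infty,0)$ because $\beta^{\pm}\in[0,1)$; hence TSD is self-decomposable (see \cite{sato}), which is exactly the assertion \eqref{PP2:a15} that $\phi_t=\phi(\cdot)/\phi(e^{-t}\cdot)$ is a characteristic function for every $t\ge0$. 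A one-line identity for characteristic functions,
\begin{equation*}
\phi_s(e^{-t}z)\,\phi_t(z)=\frac{\phi(e^{-t}z)}{\phi(e^{-(s+t)}z)}\cdot\frac{\phi(z)}{\phi(e^{-t}z)}=\frac{\phi(z)}{\phi(e^{-(s+t)}z)}=\phi_{s+t}(z),
\end{equation*}
then says $e^{-t}X_{(s)}+X_{(t)}\overset{d}{=}X_{(s+t)}$ for independent copies, and substituting into \eqref{PP2:a17} gives
\begin{equation*}
P_s(P_t f)(x)=\mathbb{E}\big[(P_t f)(xe^{-s}+X_{(s)})\big]=\mathbb{E}\big[f\big(xe^{-(s+t)}+e^{-t}X_{(s)}+X_{(t)}\big)\big]=P_{s+t}(f)(x)
\end{equation*}
for all $f\in\mathcal{F}_X$ and $s,t\ge0$ (Fubini applies since $f$ is bounded).

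For strong continuity, since $\mathcal{S}(\mathbb{R})$ is dense in $\mathcal{F}_X$ and $\|P_t\|\le1$, an $\varepsilon/3$-argument reduces everything to proving $\|P_t f-f\|\to0$ as $t\to0^{+}$ for $f\in\mathcal{S}(\mathbb{R})$. I would split
\begin{equation*}
P_t(f)(x)-f(x)=\mathbb{E}\big[f(xe^{-t}+X_{(t)})-f(xe^{-t})\big]+\big(f(xe^{-t})-f(x)\big).
\end{equation*}
The first term is at most $\|f'\|\,\mathbb{E}|X_{(t)}|$, and from \eqref{PP2:a15} (using that TSD has finite mean and variance) $\mathbb{E}X_{(t)}=(1-e^{-t})\mathbb{E}X$ and $\mathrm{Var}(X_{(t)})=(1-e^{-2t})\mathrm{Var}(X)$, so $\mathbb{E}|X_{(t)}|\le(\mathbb{E}X_{(t)}^{2})^{1/2}\to0$. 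For the second term, the Schwartz decay of $f$ gives $|f(xe^{-t})-f(x)|\le|x|(1-e^{-t})\sup_{|s|\ge|x|/2}|f'(s)|$ for $t\le\ln2$, and since $x\mapsto|x|\sup_{|s|\ge|x|/2}|f'(s)|$ is bounded on $\mathbb{R}$ (again by the decay of $f'$), this is $\le C_f(1-e^{-t})\to0$ uniformly in $x$. Hence $\|P_t f-f\|\to0$, and strong continuity follows on all of $\mathcal{F}_X$.

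The step I expect to be the main obstacle is strong continuity, precisely the requirement that the convergence be uniform in $x\in\mathbb{R}$: the dilation $x\mapsto xe^{-t}$ is not uniformly close to the identity on the whole line, so one must exploit the decay of Schwartz functions (not merely uniform continuity) before passing to the closure $\mathcal{F}_X$ by density. The only other point needing care is the verification that $\phi_t$ is genuinely a characteristic function — the self-decomposability of TSD — but this follows from the monotonicity of $k$ and \cite{sato}, after which the semigroup property is the short computation above.
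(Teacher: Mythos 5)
Your proof is correct, and it is both more complete than and methodologically different from the paper's. For the flow relation the paper stays on the Fourier side: it first checks $\phi_{t+s}(z)=\phi_s(z)\phi_t(e^{-s}z)$ and then verifies $P_t(P_sf)=P_{t+s}f$ by a direct computation with \eqref{PP2:e16}, collapsing the inner $v$-integral via the distributional identity $\int_{\mathbb R}e^{iv(e^{-s}w-z)}dv=2\pi\delta(e^{-s}w-z)$. Your route through the probabilistic form \eqref{PP2:a17} and the identity $e^{-t}X_{(s)}+X_{(t)}\overset{d}{=}X_{(s+t)}$ for independent copies is the same algebra on characteristic functions, but it avoids the delta-function manipulation and the attendant interchange-of-integrals issues, so it is arguably the cleaner of the two; both hinge on the self-decomposability of TSD (the paper's Lemma \ref{p3}) to guarantee that $\phi_t$ is a genuine characteristic function. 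The more significant difference is that the paper's proof only establishes $P_0=\mathrm{Id}$, the limit $P_tf\to\mathbb{E}f(X)$ as $t\to\infty$, and the flow property; it never addresses strong continuity at $t=0^+$, which is the actual ``$\mathbb{C}_0$'' content of the proposition, nor the invariance of $\mathcal{F}_X$ under $P_t$. You supply both: the contraction-plus-density reduction to $f\in\mathcal{S}(\mathbb{R})$, the moment computation $\mathbb{E}X_{(t)}=(1-e^{-t})\mathbb{E}X$ and $\mathrm{Var}(X_{(t)})=(1-e^{-2t})\mathrm{Var}(X)$ giving $\mathbb{E}|X_{(t)}|\to0$, and the use of Schwartz decay to make $\|f(\cdot\,e^{-t})-f\|\to0$ uniform in $x$ (correctly noting that mere uniform continuity would not suffice, since the dilation is not uniformly close to the identity on all of $\mathbb{R}$). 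So your argument fills a genuine gap in the paper's proof rather than merely rederiving it.
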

\noindent
For details of the proof, we refer the reader to Appendix \ref{app}.\vspace{2mm}

\noindent
Next, we establish an infinitesimal generator of the semigroup $(P_t)_{t\geq 0}$.

\begin{lem}\label{PP2:th2}
 Let $(P_{t})_{t\geq0}$ be a $\mathbb{C}_{0}$ semigroup defined in \eqref{PP2:e16}. Then, its generator $\mathcal{T}$ is given by
	\begin{equation}\label{e7}
	\mathcal{T}(f)(x)=-xf^{\prime}(x)+\displaystyle\int_{\mathbb R}f^{\prime}(x+u)\nu(du),~~f\in\mathcal{S}(\mathbb{R}).
	\end{equation}	
\end{lem}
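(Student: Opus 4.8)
The goal is to compute the infinitesimal generator $\mathcal{T}(f) = \lim_{t \to 0^+} \frac{P_t(f) - f}{t}$ for $f \in \mathcal{S}(\mathbb{R})$, using the Fourier-side representation \eqref{PP2:e16}. My plan is to work on the Fourier side throughout, where the semigroup action is explicit, differentiate in $t$ at $t = 0$, and then invert the Fourier transform to recognize the resulting operator. Write $g(\xi, t) = \widehat{f}(\xi)\, e^{i\xi x e^{-t}}\, \phi(\xi)/\phi(e^{-t}\xi)$, so that $P_t(f)(x) = \frac{1}{2\pi}\int_{\mathbb{R}} g(\xi,t)\, d\xi$. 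The first step is to justify differentiating under the integral sign at $t = 0$: since $f \in \mathcal{S}(\mathbb{R})$, $\widehat{f} \in \mathcal{S}(\mathbb{R})$ decays faster than any polynomial, and $\phi(\xi)/\phi(e^{-t}\xi)$ together with its $t$-derivative is bounded on $\xi \in \mathbb{R}$, $t$ in a neighborhood of $0$ (using \eqref{PP2:a15}, $|\phi_t(\xi)| \le 1$, and that $\log\phi$ is smooth with controlled derivatives from the Lévy--Khinchine form \eqref{e1}); this gives a dominating function of the form (polynomial in $\xi$) times $|\widehat{f}(\xi)|$, which is integrable.

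The second step is the computation of $\partial_t g(\xi,t)\big|_{t=0}$. Differentiating the two $t$-dependent factors: $\partial_t\, e^{i\xi x e^{-t}}\big|_{t=0} = -i\xi x\, e^{i\xi x}$, and $\partial_t \left( \phi(e^{-t}\xi) \right)^{-1}\big|_{t=0} = \xi \phi'(\xi)/\phi(\xi)^2 = i \xi \left(\int_{\mathbb{R}} u e^{i\xi u}\nu(du)\right)/\phi(\xi)$ by \eqref{PP2:e4}. Combining with the factor $\phi(\xi)$ that is already present, one obtains
\begin{equation*}
\partial_t g(\xi,t)\big|_{t=0} = \widehat{f}(\xi) e^{i\xi x}\left( -i\xi x + i\xi \int_{\mathbb{R}} u e^{i\xi u}\, \nu(du) \right).
\end{equation*}
Then $\mathcal{T}(f)(x) = \frac{1}{2\pi}\int_{\mathbb{R}} \widehat{f}(\xi) e^{i\xi x}\left( -i\xi x + i\xi \int_{\mathbb{R}} u e^{i\xi u}\nu(du)\right) d\xi$. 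The third step is to identify each piece via inverse Fourier transform: the factor $i\xi \widehat{f}(\xi)$ is the Fourier transform of $f'$, so $\frac{1}{2\pi}\int (-i\xi x)\widehat{f}(\xi) e^{i\xi x} d\xi = -x f'(x)$, and $\frac{1}{2\pi}\int i\xi \widehat{f}(\xi) e^{i\xi(x+u)} d\xi = f'(x+u)$; after swapping the (absolutely convergent) $\xi$- and $u$-integrals, the second term becomes $\int_{\mathbb{R}} f'(x+u)\,\nu(du)$. This yields \eqref{e7}.

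The main obstacle I anticipate is the analytic bookkeeping needed to legitimize the interchanges of limits and integrals — both the differentiation under the integral sign in $t$ and the Fubini swap between the $\xi$-integral and the $\nu$-integral. For the latter one must check $\int_{\mathbb{R}}\int_{\mathbb{R}} |\xi \widehat{f}(\xi)|\,|u|\,\nu(du)\,d\xi < \infty$; this uses $\int_{|u|\le 1} |u|\,\nu(du) < \infty$ (finite because $\beta^{\pm} \in [0,1)$, the key structural point) together with $\int_{|u|>1} |u|\,\nu(du) < \infty$ (finite thanks to the exponential tempering $e^{-\lambda^{\pm}|u|}$), so TSD has finite first absolute moment and the inner integral is a bounded function of $\xi$, making the product integrable against $\xi\widehat f(\xi) \in \mathcal{S}(\mathbb{R})$. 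Once finiteness of $\int_{\mathbb{R}}|u|\,\nu(du)$ is in hand, all the remaining manipulations are routine, and I would also remark that the same bound shows $\mathcal{S}(\mathbb{R})$ lies in the domain of $\mathcal{T}$, so \eqref{e7} genuinely describes the generator on this core.
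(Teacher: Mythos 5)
Your proposal is correct and follows essentially the same route as the paper: both compute the $t$-derivative of the Fourier-side integrand of \eqref{PP2:e16} at $t=0$ (the paper outsources the pointwise limit $\lim_{t\to0^+}t^{-1}(e^{izx(e^{-t}-1)}\phi_t(z)-1)=(-x+\int ue^{izu}\nu(du))(iz)$ to Proposition \ref{PP2:appendixPro1}, which is exactly what your direct differentiation via \eqref{PP2:e4} yields) and then apply inverse Fourier inversion, with your write-up being somewhat more explicit about the domination and Fubini justifications. One small remark: your own intermediate formula correctly carries the weight $u\,\nu(du)$ in the second term, and that factor of $u$ should persist into the final expression $\int f'(x+u)\,u\,\nu(du)$, exactly as in the paper's displayed computation (the lemma's statement \eqref{e7} drops it, but this is a notational slip of the paper, not of your argument).
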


\begin{proof}
	For all $f\in \mathcal{S}(\mathbb{R}),$
	\begin{align}
		\nonumber \mathcal{T}(f)(x)&=\lim_{t\to0^{+}}\frac{1}{t}\left(P_{t}(f)(x) -f(x)   \right)\\
		\nonumber &=\frac{1}{2\pi}\lim_{t\to0^{+}}\int_{\mathbb R}\widehat{g}(z)e^{iz x} \frac{1}{t}\big(e^{iz x(e^{-t}-1)}\phi_{t}(z)-1\big)dz\\ 
		\nonumber &= \frac{1}{2\pi}\int_{\mathbb R}\widehat{f}(z)e^{iz x}\left(-x+ \displaystyle\int_{\mathbb{R}}e^{iz u }u\nu(du) \right )(iz)dz \text{ (using Prop. \ref{PP2:appendixPro1})} \\
		\nonumber&=-xf^{\prime}(x)+\int_{\mathbb{R}}f^{\prime}(x+u)u\nu(du),
	\end{align}
\noindent
where the last equality follows by applying inverse Fourier transform.

\noindent
This completes the proof.
\end{proof}

\noindent
Observe that, for any $f\in\mathcal{S}(\mathbb{R})$, 
\begin{align*}
	\mathcal{T}f(x)&=-xf^{\prime}(x)+\int_{\mathbb{R}}f^{\prime}(x+u)u\nu(du)\\
	&=\mathcal{A}_{X}(f^{\prime})(x).
\end{align*}

\noindent
Next, we provide the solution to our Stein equation \eqref{PP2:e15}.  

\begin{thm}\label{thmsol}
	Let $X\sim \text{TSD}(\alpha^{+},\beta^{+},\lambda^{+};\alpha^{-},\beta^{-},\lambda^{-})$. Then for $h\in\mathcal{H}_{r}$, the function $f_{h}:\mathbb{R}\to \mathbb{R}$ defined by 
	\begin{equation}\label{PP2:SolSe}
		f_{h}(x):=-\displaystyle
		\int_{0}^{\infty}e^{-t}\int_{\mathbb{R}}h^{\prime}(xe^{-t}+y)F_{X_{(t)}}(dy)dt,
	\end{equation}
	solves \eqref{PP2:e15}.

\end{thm}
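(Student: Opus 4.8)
The plan is to verify directly that the candidate function $f_h$ in \eqref{PP2:SolSe} satisfies the Stein equation \eqref{PP2:e15}, i.e.\ that $\mathcal{A}_X(f_h)(x) = h(x) - \mathbb{E}h(X)$. The natural route is to exploit the semigroup machinery already set up: recall from the discussion following Lemma \ref{PP2:th2} that $\mathcal{T} = \mathcal{A}_X \circ \frac{d}{dx}$, so it suffices to show that $f_h' = g_h$ where $g_h$ is a solution of the \emph{generator} equation $\mathcal{T}(g_h) = h - \mathbb{E}h(X)$, and then read off $\mathcal{A}_X(f_h) = \mathcal{A}_X(g_h') $\dots{} more precisely, to show $g_h := f_h'$ (or rather, to show that with $f_h$ as defined, $\mathcal{A}_X(f_h) = h - \mathbb{E}h(X)$ holds by writing $f_h$ itself as the standard semigroup solution applied to $h'$).

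First I would observe that, by \eqref{PP2:a17}, the inner integral in \eqref{PP2:SolSe} is exactly $P_t(h')(x)$, so that
\[
f_h(x) = -\int_0^\infty e^{-t} P_t(h')(x)\, dt .
\]
The factor $e^{-t}$ together with the chain rule is the key: differentiating $P_t(h)(x) = \int_{\mathbb{R}} h(u + xe^{-t})F_{X_{(t)}}(du)$ in $x$ gives $\frac{d}{dx}P_t(h)(x) = e^{-t}P_t(h')(x)$, hence $f_h(x) = -\int_0^\infty \frac{d}{dx}\big(P_t(h)(x)\big)\,dt = -\int_0^\infty \frac{\partial}{\partial x} P_t(h)(x)\,dt$. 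Thus $f_h = \frac{d}{dx}\big(-\int_0^\infty P_t(h)\,dt\big)$, i.e.\ $f_h = F'$ where $F(x) := -\int_0^\infty \big(P_t(h)(x) - \mathbb{E}h(X)\big)\,dt$ is the classical Barbour-type solution of the generator equation $\mathcal{T}(F) = h - \mathbb{E}h(X)$ (the centering by $\mathbb{E}h(X) = \lim_{t\to\infty}P_t(h)(x)$ is needed for convergence of the integral but does not affect its $x$-derivative). Then the identity $\mathcal{T} = \mathcal{A}_X\circ\frac{d}{dx}$ gives $\mathcal{A}_X(f_h) = \mathcal{A}_X(F') = \mathcal{T}(F) = h - \mathbb{E}h(X)$, which is precisely \eqref{PP2:e15}.

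To make this rigorous I would fill in three routine-but-necessary points: (i) that $h \in \mathcal{H}_r$ (with $r \ge 1$) ensures $h'$ is bounded, so $P_t(h')$ is well-defined and $|e^{-t}P_t(h')(x)| \le e^{-t}\|h'\| \le e^{-t}$, giving absolute convergence of the defining integral and justifying differentiation under the integral sign; (ii) that $P_t(h)(x) \to \mathbb{E}h(X)$ as $t\to\infty$ (since $X_{(t)} \Rightarrow X$ as $t \to \infty$ because $\phi_t(z) = \phi(z)/\phi(e^{-t}z) \to \phi(z)$, using continuity of $\phi$ at $0$ with $\phi(0)=1$), and the rate is fast enough that $\int_0^\infty (P_t(h)(x)-\mathbb{E}h(X))\,dt$ converges — here one uses that $|P_t(h)(x) - \mathbb{E}h(X)| \lesssim e^{-t}$ via a Lipschitz/coupling bound on $xe^{-t} + X_{(t)}$ versus $X$; and (iii) the generator computation $\frac{d}{dt}P_t(h)(x)\big|_{t=0} = \mathcal{T}(h)(x)$ together with the semigroup property $\frac{d}{dt}P_t = P_t \mathcal{T} = \mathcal{T}P_t$, so that $\mathcal{T}\big(\int_0^\infty P_t(h)\,dt\big) = \int_0^\infty \mathcal{T}P_t(h)\,dt = \int_0^\infty \frac{d}{dt}P_t(h)\,dt = \lim_{t\to\infty}P_t(h) - P_0(h) = \mathbb{E}h(X) - h$.

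The main obstacle is point (ii): controlling the decay of $P_t(h)(x) - \mathbb{E}h(X)$ uniformly enough in $x$ to justify the interchange of $\mathcal{T}$ with $\int_0^\infty(\cdot)\,dt$ and to guarantee $f_h$ lands in the domain $\mathcal{F}_X$ of $\mathcal{A}_X$. Since $h$ is only globally Lipschitz (not compactly supported), the bound $|P_t(h)(x) - \mathbb{E}h(X)| = |\mathbb{E}h(xe^{-t} + X_{(t)}) - \mathbb{E}h(X)| \le \|h'\|\big(|x|e^{-t} + \mathbb{E}|X_{(t)} - X|\big)$ grows linearly in $x$; one must argue that after passing to $f_h = \frac{d}{dx}F$ this linear growth differentiates away, or work with $h'$ directly from the start (as the statement does) and bound $e^{-t}P_t(h')(x)$ by $e^{-t}\|h'\|$, which is clean. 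I would therefore present the proof in the second form — define everything through $h'$, establish $f_h(x) = -\int_0^\infty e^{-t}P_t(h')(x)\,dt$, and then verify $\mathcal{A}_X(f_h)(x) = h(x) - \mathbb{E}h(X)$ by the chain-rule identity above — relegating the semigroup facts (Proposition \ref{PP2:proSem}, Lemma \ref{PP2:th2}) and the dominated-convergence justifications to citations of the already-proved results.
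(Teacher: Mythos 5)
Your proposal is correct and follows essentially the same route as the paper: identify $f_h$ as the $x$-derivative of the Barbour-type generator solution $g_h(x)=-\int_0^\infty\bigl(P_t(h)(x)-\mathbb{E}h(X)\bigr)\,dt$, use the relation $\mathcal{T}=\mathcal{A}_X\circ\frac{d}{dx}$, and evaluate $-\int_0^\infty\mathcal{T}P_t(h)\,dt=P_0h-P_\infty h=h-\mathbb{E}h(X)$. The technical points you flag (absolute convergence via the bound on $|P_t(h)(x)-\mathbb{E}h(X)|$, differentiation under the integral, and $g_h'=f_h$) are exactly the ones the paper defers to its appendix.
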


\begin{proof}

 To prove this theorem, we use the connection between the operators $\mathcal{A}_X$ and $\mathcal{T}$. We write,
\begin{align}
	\nonumber\mathcal{A}f_{h}(x)&=-xf_{h}(x)+\int_{\mathbb{R}} f_{h}(x+u)u\nu (du)\\
	\nonumber&=\mathcal{T}(g_{h})(x),~~(\text{where }g_{h}(x)=-\displaystyle\int_{0}^{\infty}\left(P_{t}(h)(x)-\mathbb{E}h(X)  \right)dt,~h\in \mathcal{H}_r ) \\
	\nonumber&=-\displaystyle\int_{0}^{\infty}\mathcal{T}P_{t}(h)(x)dt\\
	\nonumber&=-\displaystyle\int_{0}^{\infty}\frac{d}{ds}P_{t}(h)(x)dt\\
	\nonumber&=P_{0}h(x)-P_{\infty}h(x)\\
	\nonumber&=h(x)-\mathbb{E}h(X)~(\text{by Proposition \ref{PP2:proSem}}).
\end{align}
\noindent
Hence, $f_{h}$ is the solution to \eqref{PP2:e15}. 

\noindent
 Using some standard argument, one can show that $g_{h}$ is well-defined and $g_{h}^{\prime}(x)=f_{h}(x)$, $x\in\mathbb{{R}}$. For details of the proof, we refer the reader to Appendix \ref{app}.
\end{proof}

\subsection{Properties to the solution}
\noindent
The next step is to estimate the properties of $f_{h}$. In the following theorem, we establish estimates of $f_{h}$, which play a crucial role in the TSD approximation problems. 	Gaunt \cite{k24,kk2} and D$\ddot{o}$bler et. al. \cite{kk6} propose various methods for bounding the solution to the Stein equations that allow them to derive properties of the solution to the Stein equation, in particular for a subfamily of TSD, namely the variance-gamma. However, we derive the properties of the solution to the Stein equation for TSD using its self-decomposable property.
\begin{thm}\label{th3}
	For $h\in\mathcal{H}_4$, let $f_{h}$ be defined in \eqref{PP2:SolSe}. Then,
		\begin{align}\label{PP2:pr1}
			\|f_{h}\|\leq\|h^{(1)}\|,~~\|f^{\prime}_{h}\|\leq\frac{1}{2}\|h^{(2)}\|,~~
			\|f^{\prime\prime}_{h}\|\leq\frac{1}{3}\|h^{(3)}\|, \|f^{\prime\prime\prime}_{h}\|\leq\frac{1}{4}\|h^{(4)}\|.
		\end{align}	
	\noindent
	For any $x,y\in\mathbb{R},$ 
	\begin{align}\label{PP2:pr02}
		\|f^{\prime\prime}_{h}(x)- f^{\prime\prime}_{h}(y)  \| \leq \frac{\|h^{(4)}\|}{4}\left| x-y\right|.
	\end{align}	 
	\end{thm}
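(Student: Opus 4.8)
The plan is to differentiate the representation \eqref{PP2:SolSe} under the integral sign repeatedly, exploiting that $F_{X_{(t)}}$ is a probability measure for every $t\geq 0$, together with the exponential weight $e^{-t}$ in the $t$-integral and the extra chain-rule factor $e^{-t}$ picked up at each differentiation in $x$.

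First I would record the base estimate: since $F_{X_{(t)}}$ is a probability measure, $\left|\int_{\mathbb{R}}h^{\prime}(xe^{-t}+y)F_{X_{(t)}}(dy)\right|\leq\|h^{(1)}\|$ for every $x\in\mathbb{R}$ and $t\geq 0$, hence
\[
\|f_{h}\|\leq\int_{0}^{\infty}e^{-t}\|h^{(1)}\|\,dt=\|h^{(1)}\|.
\]
Next, for $h\in\mathcal{H}_4$, differentiating under the integral with respect to $x$ (justified by a standard theorem on parameter integrals, the dominating function for the $k$-th step being $e^{-(k+1)t}\|h^{(k+1)}\|$, which is integrable in $t$ against Lebesgue measure on $(0,\infty)$ and in $y$ against the probability measure $F_{X_{(t)}}$) yields
\[
f_{h}^{(k)}(x)=-\int_{0}^{\infty}e^{-(k+1)t}\int_{\mathbb{R}}h^{(k+1)}(xe^{-t}+y)F_{X_{(t)}}(dy)\,dt,\qquad k=0,1,2,3,
\]
since each differentiation in $x$ contributes one extra factor $e^{-t}$ from the chain rule and raises the order of the derivative of $h$ by one. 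Bounding $|h^{(k+1)}|\leq\|h^{(k+1)}\|$ and using that $F_{X_{(t)}}$ is a probability measure gives $\|f_{h}^{(k)}\|\leq\|h^{(k+1)}\|\int_{0}^{\infty}e^{-(k+1)t}\,dt=\frac{1}{k+1}\|h^{(k+1)}\|$, which is precisely \eqref{PP2:pr1}.

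Finally, the Lipschitz-type bound \eqref{PP2:pr02} follows from the case $k=3$: by the above, $f_{h}^{\prime\prime}$ is continuously differentiable with $\|(f_{h}^{\prime\prime})^{\prime}\|=\|f_{h}^{\prime\prime\prime}\|\leq\frac14\|h^{(4)}\|$, so the mean value theorem gives $|f_{h}^{\prime\prime}(x)-f_{h}^{\prime\prime}(y)|\leq\|f_{h}^{\prime\prime\prime}\|\,|x-y|\leq\frac{\|h^{(4)}\|}{4}|x-y|$.

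The main obstacle I anticipate is the rigorous justification of differentiation under the integral sign: one must check that, for $h\in\mathcal{H}_4$, the iterated integrals and their formal $x$-derivatives converge absolutely and uniformly on compact sets, so that the dominated convergence theorem applies at each of the four steps. The exponential weights $e^{-(k+1)t}$ ensure convergence of the $t$-integral at infinity, while near $t=0$ the integrand is bounded (by $\|h^{(k+1)}\|$), so this is routine but should be spelled out; everything else in the statement is then a one-line consequence.
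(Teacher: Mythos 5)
Your proposal is correct and follows essentially the same route as the paper: differentiate the representation \eqref{PP2:SolSe} under the integral, pick up a factor $e^{-t}$ per derivative, and use that $F_{X_{(t)}}$ is a probability measure to get $\|f_h^{(k)}\|\leq\frac{1}{k+1}\|h^{(k+1)}\|$. The only cosmetic difference is in \eqref{PP2:pr02}, where the paper applies the Lipschitz bound to $h^{(3)}$ inside the $t$-integral rather than invoking the mean value theorem for $f_h^{\prime\prime}$ via the bound on $f_h^{\prime\prime\prime}$; both give the same constant.
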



\begin{proof}

Recall the definition of $(P_{t})_{t\geq 0}$, 

$$P_{t}f(x)=\displaystyle\int_{\mathbb{R}}f(y+e^{-t}x)F_{X_{(t)}}(dy),~~f\in\mathcal{F}_{X},$$

\noindent
where $F_{X_{(t)}}$ is the distribution function of $X_{(t)}$. Thus, for $h\in\mathcal{H}_{4}$,
\begin{align*}
	\frac{d}{dx}(P_{t}(h)(x))&=e^{-t}\int_{\mathbb R}h^{(1)}(xe^{-t}+y)F_{X_{(t)}}(dy),\\ 
	\frac{d^{2}}{dx^{2}}(P_{t}(h)(x))&=e^{-2t}\int_{\mathbb R}h^{(2)}(xe^{-t}+y)F_{X_{(t)}}(dy),\\
	 \frac{d^{3}}{dx^{3}}(P_{t}(h)(x))&=e^{-3t}\int_{\mathbb R}h^{(3)}(xe^{-t}+y)F_{X_{(t)}}(dy),\\
	 \text{and}~~\frac{d^{4}}{dx^{4}}(P_{t}(h)(x))&=e^{-4t}\int_{\mathbb R}h^{(4)}(xe^{-t}+y)F_{X_{(t)}}(dy).
\end{align*}

\noindent
Let 

$$	f_{h}(x)=-\displaystyle
\int_{0}^{\infty}e^{-t}\int_{\mathbb{R}}h^{\prime}(xe^{-t}+y)F_{X(t)}(dy)dt.$$

\noindent
It can be easily seen that $f_{h}$ is thrice differentiable.
Hence, $\|f_{h}\|\leq\|h^{(1)}\|,~~\|f^{\prime}_{h}\|\leq\frac{1}{2}\|h^{(2)}\|,~~
\|f^{\prime\prime}_{h}\|\leq\frac{1}{3}\|h^{(3)}\|, \|f^{\prime\prime\prime}_{h}\|\leq\frac{1}{4}\|h^{(4)}\|$.\vspace{2mm}

\noindent
Now observe that, for any $x,y\in\mathbb{R}$ and $h\in \mathcal{H}_{4}$,

\begin{align*}
	\left|f^{\prime\prime}_{h}(x)-f^{\prime\prime}_{h}(y) \right| & \leq \displaystyle\int_{0}^{\infty}e^{-3t}\int_{\mathbb{R}}\left|h^{(3)}(xe^{-t}+z)-h^{(3)}(ye^{-t}+z)  \right|F_{X_{(t)}}(dz)dt\\
	&\leq  \displaystyle\int_{0}^{\infty}e^{-3t}\int_{\mathbb{R}}\|h^{(4)}\|\left|x-y\right|e^{-t}F_{X_{(t)}}(dz)dt\\
	&=\|h^{(4)}\|\left|x-y  \right| \displaystyle\int_{0}^{\infty}e^{-4t}dt\\
	&=\frac{\|h^{(4)}\|}{4}\left|x-y  \right|,
\end{align*}
\noindent
the desired conclusion follows.

\end{proof}


\section{Applications}\label{PP2:application}
\noindent
In this section, we present three applications of our estimates.

\subsection{Comparison between two TSD} As a first application of our estimates, we derive a simple error bound for approximation between TSD. We refer the reader to \cite{ley} for a number of similar bounds for comparison of uni-variate distributions. First, we establish a corollary to Theorem \ref{th3}, which is used in deriving an upper bound in the Wasserstein-type distance between two TSD.

\begin{cor}
	\noindent
	For $h\in\mathcal{H}_3$, let $f_{h}$ be defined in \eqref{PP2:SolSe}. Let $\alpha^{+}=\alpha^{-}=\alpha$, $\beta^{+}=\beta^{-}=0$ and $\lambda^{+}=\lambda^{-}=\lambda$. Then, for any $x\in\mathbb{R}$
	\begin{align}\label{PP2:pr2}
		\|xf_{h}^{\prime\prime} (x)\| \leq 2\left(\|h^{(2)}\|+\frac{\alpha}{3\lambda}\|h^{(3)}\|\right).
	\end{align}
\end{cor}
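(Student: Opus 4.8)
\noindent
The plan is to exploit the explicit integral form of the Stein equation in the present symmetric situation and then differentiate it twice. With $\alpha^{+}=\alpha^{-}=\alpha$, $\beta^{+}=\beta^{-}=0$ and $\lambda^{+}=\lambda^{-}=\lambda$, the L\'evy measure \eqref{e2} coincides with $\nu_{\text{VGD}}$ of Definition \ref{vgddf1} for $\lambda^{+}=\lambda^{-}=\lambda$; hence, arguing exactly as in the derivation of \eqref{PP2:e11}, the Stein equation \eqref{PP2:e15} solved by $f_{h}$ (Theorem \ref{thmsol}) reads
$$-xf_{h}(x)+\alpha\int_{0}^{\infty}e^{-\lambda u}\big(f_{h}(x+u)-f_{h}(x-u)\big)\,du=h(x)-\mathbb{E}h(X).$$
First I would record, from the expressions for $\frac{d^{2}}{dx^{2}}(P_{t}(h)(x))$ and $\frac{d^{3}}{dx^{3}}(P_{t}(h)(x))$ computed in the proof of Theorem \ref{th3} (which use only $h\in\mathcal{H}_{3}$), that $f_{h}\in C^{2}(\mathbb{R})$ with $\|f_{h}'\|\le\tfrac{1}{2}\|h^{(2)}\|$ and $\|f_{h}''\|\le\tfrac{1}{3}\|h^{(3)}\|$; in particular $f_{h}$, $f_{h}'$ and $f_{h}''$ are all bounded.

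Next I would differentiate the displayed identity twice in $x$, passing the derivatives under the integral sign; this is legitimate by dominated convergence, since $u\mapsto e^{-\lambda u}$ is integrable on $(0,\infty)$ and $f_{h}'$, $f_{h}''$ are bounded. Because the integral kernel contains no explicit factor of $x$, no derivative of $f_{h}$ of order higher than two is produced, and after rearranging one obtains
$$xf_{h}''(x)=-2f_{h}'(x)-h''(x)+\alpha\int_{0}^{\infty}e^{-\lambda u}\big(f_{h}''(x+u)-f_{h}''(x-u)\big)\,du.$$
It is precisely this step that forces working with the integral form rather than with the second-order differential operator of Corollary \ref{corvg}: differentiating $f\mapsto xf''$ twice would reintroduce the unbounded term $xf^{(4)}$ (and require $h\in\mathcal{H}_{4}$), whereas the integral operator does not raise the order.

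Finally I would estimate the right-hand side by the triangle inequality, using $|f_{h}'(x)|\le\tfrac{1}{2}\|h^{(2)}\|$, $|h''(x)|\le\|h^{(2)}\|$, the crude bound $|f_{h}''(x+u)-f_{h}''(x-u)|\le 2\|f_{h}''\|\le\tfrac{2}{3}\|h^{(3)}\|$ (no Lipschitz estimate for $f_{h}''$ is invoked, which is exactly why $h\in\mathcal{H}_{3}$ suffices here, whereas \eqref{PP2:pr02} needed $h\in\mathcal{H}_{4}$), together with $\int_{0}^{\infty}e^{-\lambda u}\,du=1/\lambda$. This yields $|xf_{h}''(x)|\le\|h^{(2)}\|+\|h^{(2)}\|+\tfrac{2\alpha}{3\lambda}\|h^{(3)}\|=2\big(\|h^{(2)}\|+\tfrac{\alpha}{3\lambda}\|h^{(3)}\|\big)$, and taking the supremum over $x\in\mathbb{R}$ gives \eqref{PP2:pr2}. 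The one genuinely delicate point — the main obstacle — is justifying the term-by-term differentiation of the Stein equation for the non-Schwartz solution $f_{h}$; once the boundedness of $f_{h}$, $f_{h}'$, $f_{h}''$ is available, the remainder is routine bookkeeping with estimates already established in Theorem \ref{th3}.
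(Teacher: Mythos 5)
Your proposal is correct and follows essentially the same route as the paper: differentiate the integral form of the Stein equation twice under the symmetric parameter choice, isolate $xf_h''(x)$, and bound the resulting terms by $\|h^{(2)}\|$, $2\|f_h'\|\le\|h^{(2)}\|$, and $2\alpha\|f_h''\|/\lambda\le\tfrac{2\alpha}{3\lambda}\|h^{(3)}\|$ using the estimates of Theorem \ref{th3}. The additional remarks on differentiating under the integral sign and on why $h\in\mathcal{H}_3$ suffices are sound elaborations of steps the paper leaves implicit.
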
 

\begin{proof}
\noindent
As $f_{h}$ solves \eqref{PP2:e15}, thus we have

\begin{align}\label{PP2:e35}
	-xf_{h}(x)+\int_{\mathbb{R}}f_{h}(x+u)\nu(du)=h(x)-\mathbb{E}h(X).
\end{align}

\noindent
Recall that, the L\'evy measure for TSD($\alpha^{+},\beta^{+},\lambda^{+};\alpha^{-},\beta^{-},\lambda^{-}$) given in \eqref{e2}. Assume that $\alpha^{+}=\alpha^{-}=\alpha$, $\beta^{+}=\beta^{-}=0$, $\lambda^{+}=\lambda^{-}=\lambda$ and differentiating \eqref{PP2:e35} twice with respect to $x$, we have

\begin{align}
	\nonumber	-xf^{\prime\prime}_{h}(x)&=h^{(2)}(x)+2f^{\prime}_{h}(x)-\displaystyle\int_{\mathbb{R}}f^{\prime\prime}_{h}(x+u)\nu(du)\\
	&=h^{(2)}(x)+2f^{\prime}_{h}(x)-\alpha\displaystyle\int_{0}^{\infty}e^{-\lambda u} \left(f_{h}^{\prime\prime}(x+u)- f_{h}^{\prime\prime}(x-u)  \right)du.\label{PP2:e36}
\end{align}

\noindent
Using \eqref{PP2:pr1}, we have

\begin{align*}
	\|xf_{h}^{\prime\prime}(x)\|&\leq 2\|h^{(2)}\|+\frac{2\alpha}{3}\|h^{(3)}\|\int_{0}^{\infty}e^{-\lambda u}du\\
	&=2\|h^{(2)}\|+\frac{2\alpha}{3}\frac{\|h^{(3)}\|}{\lambda},
\end{align*}

\noindent
the desired conclusion follows.

\end{proof}

\noindent
In the following theorem, we establish a simple error bound for approximation between two TSD.

\begin{thm}
	Let $X\sim\text{TSD}(\alpha_{1},0,\lambda_{1};\alpha_{1},0,\lambda_{1})$ and $Y\sim\text{TSD}(\alpha_{2},0,\lambda_{2};\alpha_{2},0,\lambda_{2})$. Then, for $\lambda_{1}>1$
	\begin{align*}
		d_{W_{3}}(Y,X) \leq \frac{\lambda_{1}^{2}}{\lambda_{1}^{2}-1}\left( \left|\frac{\alpha_{1}}{\lambda_{1}^{2}}- \frac{\alpha_{2}}{\lambda_{2}^{2}} \right|+2\left(1+\frac{\alpha_1}{3\lambda_1}\right)\left| \frac{1}{\lambda_{1}^{2}}-\frac{1}{\lambda_{2}^{2}} \right| \right).
	\end{align*}
\end{thm}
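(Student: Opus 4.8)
The plan is to apply the Stein equation machinery twice, once for each distribution, and combine the estimates. Since $Y\sim\text{TSD}(\alpha_{2},0,\lambda_{2};\alpha_{2},0,\lambda_{2})$, for any $h\in\mathcal{H}_{3}$ we take $f_{h}$ solving the Stein equation \eqref{PP2:e15} associated with $X\sim\text{TSD}(\alpha_{1},0,\lambda_{1};\alpha_{1},0,\lambda_{1})$, so that
$$
\mathbb{E}h(Y)-\mathbb{E}h(X)=\mathbb{E}\Big(-Yf_{h}(Y)+\int_{\mathbb{R}}f_{h}(Y+u)\nu_{1}(du)\Big),
$$
where $\nu_{1}$ is the L\'evy measure of $X$. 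The key idea is to rewrite the right-hand side using the fact that $Y$ satisfies its own Stein identity (Theorem \ref{th1}) with its L\'evy measure $\nu_{2}$: subtracting $\mathbb{E}\big(-Yf_{h}(Y)+\int_{\mathbb{R}}f_{h}(Y+u)\nu_{2}(du)\big)=0$ leaves
$$
\mathbb{E}h(Y)-\mathbb{E}h(X)=\mathbb{E}\int_{\mathbb{R}}f_{h}(Y+u)\,(\nu_{1}-\nu_{2})(du).
$$
So the whole problem reduces to controlling the integral of $f_{h}$ against the signed measure $\nu_{1}-\nu_{2}$.

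Next I would make this signed-measure integral explicit. With $\beta^{\pm}=0$ the L\'evy measure is $\nu_{j}(du)=\alpha_{j}\big(u^{-1}e^{-\lambda_{j}u}\mathbf{1}_{(0,\infty)}+|u|^{-1}e^{-\lambda_{j}|u|}\mathbf{1}_{(-\infty,0)}\big)du$, which has the non-integrable $1/u$ singularity at the origin, so one cannot naively bound $\|f_h\|$ times total variation. The standard remedy is to integrate by parts / use the Frullani-type structure: writing $f_{h}(Y+u)-f_{h}(Y)$ absorbs one power of $u$, and since $\int_{\mathbb{R}}(f_h(Y+u)-f_h(Y))\nu_j(du) = Yf_h(Y) - \mathbb{E}(\cdots)$ is exactly the Stein operator, one sees directly that $\int_{\mathbb{R}} f_h(Y+u)(\nu_1-\nu_2)(du)$ should be expressed through the second-derivative bound. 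Concretely, I would differentiate the Stein equation as in \eqref{PP2:e36}, using the identity $\int_0^\infty e^{-\lambda u}(g(x+u)-g(x-u))\,du$ representations, to express the difference in terms of $f_h'$, $f_h''$ and $xf_h''$, then invoke \eqref{PP2:pr1} and \eqref{PP2:pr2} from Theorem \ref{th3} and its corollary. The natural parametrization is to note $\sigma_j^{2}=1/\lambda_j^{2}$ and the "size" parameter $\alpha_j/\lambda_j^2 = \sigma_j^2\alpha_j$, so the two error terms $|\alpha_1/\lambda_1^2 - \alpha_2/\lambda_2^2|$ and $|1/\lambda_1^2 - 1/\lambda_2^2|$ arise from comparing, respectively, the "mean/scale" part and the "variance" part of the two L\'evy measures.

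More precisely, I expect the computation to run as follows: express $\mathbb{E}h(Y)-\mathbb{E}h(X)$ via the operator $\mathcal{A}_{X}$ applied to $f_h$ and evaluated at $Y$, then use that $\mathcal{A}_X(f_h) = \mathcal{A}_Y(f_h) + (\mathcal{A}_X - \mathcal{A}_Y)(f_h)$ with $\mathbb{E}\mathcal{A}_Y(f_h)(Y)=0$; the operator difference, after integrating the $1/u$ kernels against second-order Taylor remainders of $f_h$, produces a term proportional to $\mathbb{E}|Yf_h''(Y)|$ and terms proportional to $\|f_h'\|,\|f_h''\|$. Bounding these by \eqref{PP2:pr1} and \eqref{PP2:pr2} gives a linear combination of $\|h^{(2)}\|,\|h^{(3)}\|\le 1$ with the parameter-difference coefficients, and a final factor $\lambda_1^2/(\lambda_1^2-1)$ (requiring $\lambda_1>1$) emerging from the geometric-series factor $\int_0^\infty e^{-(\text{something})t}\,dt$ tied to the $e^{-2t}$ contraction of $P_t$ at the level of second derivatives — this is where the hypothesis $\lambda_1>1$ is used, essentially to absorb an extra $\alpha_1$-dependent self-correction term back into the left side.

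The main obstacle will be handling the singular $1/u$ behaviour of $\nu_1-\nu_2$ near the origin rigorously: one must confirm that the Taylor-remainder trick produces an \emph{integrable} kernel (one gains exactly one power of $u$, turning $u^{-1}e^{-\lambda u}du$ into $e^{-\lambda u}du$, which is fine) and that all the interchanges of expectation and integration are justified for $f_h$ coming from $h\in\mathcal{H}_3$ — this is where the finiteness of the relevant moments of TSD (exponential moments, hence all polynomial moments, exist by the tempering) is essential. After that, assembling the constants is routine bookkeeping of the kind already carried out in the corollary preceding the theorem.
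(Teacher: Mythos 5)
Your plan is essentially the paper's proof: evaluate the Stein operator of $X$ at $Y$, integrate by parts twice to pass to the second-order form, cancel against the corresponding identity satisfied by $Y$ (obtained as in Corollary \ref{corvg}), and bound the leftover terms $\left(\frac{2\alpha_1}{\lambda_1^2}-\frac{2\alpha_2}{\lambda_2^2}\right)f_h'(Y)+\left(\frac{1}{\lambda_1^2}-\frac{1}{\lambda_2^2}\right)Yf_h''(Y)$ via \eqref{PP2:pr1} and \eqref{PP2:pr2}. The one inaccuracy is your account of the factor $\lambda_1^2/(\lambda_1^2-1)$: it does not arise from the $e^{-2t}$ contraction of $P_t$, but from moving the self-referential term $\frac{1}{\lambda_1^2}\,\mathbb{E}\bigl[\mathcal{A}_X(f_h'')(Y)\bigr]$, itself controlled by $\frac{1}{\lambda_1^2}\,d_{W_3}(Y,X)$, to the left-hand side — precisely the absorption mechanism you anticipate in your final parenthetical, and the only place $\lambda_1>1$ is used.
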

 
 \begin{proof}
 	By \eqref{PP2:e15}, we can write the Stein equation for $X\sim\text{TSD}(\alpha_{1},0,\lambda_{1};\alpha_{1},0,\lambda_{1})$ as 
 	\begin{align}
 		\nonumber	h(x)-h(X)&=-xf(x)+\displaystyle\int_{\mathbb R}f(x+u)\nu(du)\\
 		&=-xf(x)+\alpha_1 \displaystyle\int_{0}^{\infty}\left(f(x+u)-f(x-u) \right)e^{-\lambda_1 u}du \label{PP2:a24}
 	\end{align}

 	\noindent
 	Thus, we have
 	\begin{align}
 		\nonumber	\mathbb{E}\left(h(Y)-h(X) \right)&=\mathbb{E}\left(-Yf_{h}(Y)+\alpha_1 \displaystyle\int_{0}^{\infty}\left(f_{h}(Y+u)-f_{h}(Y-u) \right)e^{-\lambda_1 u}du \right)\\
 		\nonumber	&=\mathbb{E}\left(-Yf_{h}(Y)+\frac{\alpha_{1}}{\lambda_{1}}\displaystyle\int_{0}^{\infty}\left(f_{h}^{\prime}(Y+u)+f_{h}^{\prime}(Y-u) \right) e^{-\lambda_1 u}du \right)\\
 		\nonumber	&=\mathbb{E}\left(-Yf_{h}(Y)+\frac{2\alpha_1}{\lambda_{1}^{2}}f_{h}^{\prime}(Y)+ \frac{\alpha_{1}}{\lambda_{1}^{2}}\int_{0}^{\infty}\left(f_{h}^{\prime\prime}(Y+u)-f_{h}^{\prime\prime}(Y-u) \right) e^{-\lambda_1 u}du\right)\\
 		\nonumber	&=\mathbb{E}\left(-Yf_{h}(Y)+\frac{2\alpha_1}{\lambda_{1}^{2}}f_{h}^{\prime}(Y)+\frac{1}{\lambda_{1}^{2}}Yf_{h}^{\prime\prime}(Y)\right)\\&  + \frac{1}{\lambda_{1}^{2}}\mathbb{E}\left(-Yf^{\prime\prime}_{h}(Y)+\alpha_{1}\displaystyle\int_{0}^{\infty}\left(f_{h}^{\prime\prime}(Y+u)-f_{h}^{\prime\prime}(Y-u) \right) e^{-\lambda_1 u}du\right) \label{PP2:a25}
 	\end{align}
 	
 	\noindent
 	Taking $\sup_{h\in\mathcal{H}_{3}}$ on both side of \eqref{PP2:a25} and rearranging the terms, we have
 	
 	\begin{align}\label{PP2:a26}
 		\left(1-\frac{1}{\lambda_{1}^{2}} \right)d_{W_{3}}(Y,X) \leq  \sup_{h\in\mathcal{H}_{3}}\left|\mathbb{E}\left(-Yf_{h}(Y)+\frac{2\alpha_1}{\lambda_{1}^{2}}f_{h}^{\prime}(Y)+\frac{1}{\lambda_{1}^{2}}f_{h}^{\prime\prime}(Y)   \right)  \right|
 	\end{align}
 	
 	\noindent
 	Note that, $Y\sim\text{TSD}(\alpha_{2},0,\lambda_{2};\alpha_{2},0,\lambda_{2}).$ Hence, following steps similar to proof of Corollary \ref{corvg}, we get
 	
 	\begin{align}\label{PP2:a27}
 		\mathbb{E} \left(-Yf_{h}(Y)+\frac{2\alpha_2}{\lambda_{2}^{2}}f_{h}^{\prime}(Y)+\frac{1}{\lambda_{2}^{2}}f_{h}^{\prime\prime}(Y)   \right)=0
 	\end{align}
 	\noindent
 	Using \eqref{PP2:a27} in \eqref{PP2:a26}, we have

 	\begin{align}
 		\nonumber	\left(1-\frac{1}{\lambda_{1}^{2}} \right)d_{W_{3}}(Y,X)  & \leq \sup_{h\in\mathcal{H}_{3}}\left|\mathbb{E}\left(-Yf_{h}(Y)+\frac{2\alpha_1}{\lambda_{1}^{2}}f_{h}^{\prime}(Y)+\frac{1}{\lambda_{1}^{2}}f_{h}^{\prime\prime}(Y)   \right)\right.\\
 \nonumber	&	\left.-\mathbb{E}\left(-Yf_{h}(Y)+\frac{2\alpha_2}{\lambda_{2}^{2}}f_{h}^{\prime}(Y)+\frac{1}{\lambda_{2}^{2}}f_{h}^{\prime\prime}(Y)   \right)  \right| \\
 		\nonumber& \leq \sup_{h\in\mathcal{H}_{3}} \left|\mathbb{E}\left[ \left(\frac{2\alpha_1}{\lambda_{1}^{2}}-\frac{2\alpha_2}{\lambda_{2}^{2}}  \right)f^{\prime}_{h}(Y)\right]+\mathbb{E}\left[\left(\frac{1}{\lambda^{2}_{1}}-\frac{1}{\lambda^{2}_{2}} \right)Yf^{\prime\prime}_{h}(Y)\right]  \right|\\
 		\label{PP2:a28}	&\leq \left|\frac{2\alpha_1}{\lambda_{1}^{2}}-\frac{2\alpha_2}{\lambda_{2}^{2}}\right| \|f^{\prime}_{h}\|+\left|\frac{1}{\lambda^{2}_{1}}-\frac{1}{\lambda^{2}_{2}} \right| \|yf_{h}^{\prime\prime}(y)\|
 	\end{align}
 	
 	\noindent
 	Assume that $\|h^{(k)}\|\leq 1$ for $k=2,3$. Then, by using \eqref{PP2:pr1} and \eqref{PP2:pr2} in \eqref{PP2:a28}, we get our desired result.
 	
 \end{proof}

\begin{rem}
	Note that, if $\alpha_{1}=\alpha_{2}$ and $\lambda_{1}=\lambda_{2}>1$, this bound is equal to zero. This shows that $Y\overset{d}{=}X.$
\end{rem}

\subsection{Rate of convergence for the Laplace approximation of random geometric sums } Recall that the distribution of a random variable $X$ with cf $\psi_1(z)=\frac{1}{1+\frac{z^{2}}{\lambda^{2}}}, ~~z\in\mathbb{R},$ where $\lambda>0$, is called the Laplace distribution (we write $X\sim Laplace (0,\frac{1}{\lambda^{2}})$). Note here that, $X\sim Laplace(0,\frac{1}{\lambda^{2}}) \overset{d}{=}\text{TSD}(1,0,\lambda;1,0,\lambda)$. Let $N_p$ be a $Geo(p)$ random variable with cf $\psi_2(z)=\frac{p}{1-(1-p)e^{iz}}, ~~z\in\mathbb{R}.$ In the following theorem, we present a well-known limit theorem concerning to geometric sum of i.i.d random variables.

\begin{thm}(\cite{gaunt7}, p.2)
	Let $(Y_{n})_{n\geq 1}$ be a sequence of i.i.d random variables with zero mean and variance $\frac{2}{\lambda^{2}}\in (0,\infty)$ and let $N_p \sim Geo(p)$ be independent of $Y_{i}$ with probability mass function $\mathbb{P}(N_{p}=k)=p(1-p)^{k};$ $k=0,1,2,\ldots,$ $0<p<1$. Then, $S_{p}:=\sqrt{p}\sum_{i=1}^{N_{p}}Y_{i} \overset{d}{\to}Laplace(0,\frac{1}{\lambda^{2}}),~~\text{as}~~p\to0$.
\end{thm}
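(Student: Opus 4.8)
The plan is to prove this via characteristic functions and L\'evy's continuity theorem. Write $\varphi(w)=\mathbb{E}[e^{iwY_{1}}]$ for the common characteristic function of the $Y_{i}$. Because $\mathbb{E}Y_{1}=0$ and $\mathrm{Var}(Y_{1})=2/\lambda^{2}<\infty$, the existence of the second moment yields the expansion $\varphi(w)=1-w^{2}/\lambda^{2}+o(w^{2})$ as $w\to 0$.

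The first step is to compute the characteristic function of $S_{p}$. Conditioning on $N_{p}$ and using that $N_{p}$ is independent of the i.i.d.\ sequence $(Y_{i})_{i\geq 1}$,
\begin{equation*}
	\mathbb{E}\big[e^{izS_{p}}\big]=\mathbb{E}\big[\varphi(\sqrt{p}\,z)^{N_{p}}\big]=\sum_{k=0}^{\infty}p(1-p)^{k}\varphi(\sqrt{p}\,z)^{k}=\frac{p}{1-(1-p)\varphi(\sqrt{p}\,z)},
\end{equation*}
the last equality being the probability generating function $s\mapsto p/(1-(1-p)s)$ of $Geo(p)$ evaluated at $s=\varphi(\sqrt{p}\,z)$; the series converges since $|\varphi(\sqrt{p}\,z)|\leq 1<1/(1-p)$.

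Next, I would fix $z\in\mathbb{R}$ and let $p\to 0$. As $\sqrt{p}\,z\to 0$, the expansion of $\varphi$ gives $\varphi(\sqrt{p}\,z)=1-pz^{2}/\lambda^{2}+o(p)$, whence
\begin{equation*}
	1-(1-p)\varphi(\sqrt{p}\,z)=p+(1-p)\frac{pz^{2}}{\lambda^{2}}+o(p)=p\Big(1+\tfrac{z^{2}}{\lambda^{2}}+o(1)\Big),
\end{equation*}
so that $\mathbb{E}[e^{izS_{p}}]\to(1+z^{2}/\lambda^{2})^{-1}=\psi_{1}(z)$ as $p\to 0$. Since $\psi_{1}$ is the characteristic function of $Laplace(0,1/\lambda^{2})$ and is continuous at the origin, L\'evy's continuity theorem gives $S_{p}\overset{d}{\to}Laplace(0,1/\lambda^{2})$, which is the claim.

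The only genuine obstacle is making the Taylor expansion of $\varphi$ and the control of its error term fully rigorous; this is standard given the second-moment hypothesis — e.g.\ via the bound $|e^{iw}-1-iw+w^{2}/2|\leq\min\{w^{2},|w|^{3}/6\}$ together with dominated convergence — and no uniformity in $z$ is needed since L\'evy's theorem requires only pointwise convergence of characteristic functions.
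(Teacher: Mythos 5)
Your proof is correct. Note that the paper does not prove this statement at all --- it is quoted from Gaunt (2021) as a known limit theorem --- so there is no in-paper argument to compare against; your characteristic-function computation (geometric probability generating function evaluated at $\varphi(\sqrt{p}\,z)$, second-order expansion of $\varphi$ near the origin, and L\'evy's continuity theorem) is the standard and complete route to this classical result on geometric random sums, and the limit $(1+z^{2}/\lambda^{2})^{-1}$ matches exactly the Laplace characteristic function $\psi_{1}$ as the paper defines it.
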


\noindent
Next, we define centered equilibrium distribution, which plays an important role in the Laplace approximation for geometric random sum. 

\begin{defn}(\cite[p.9]{pike})
	For any non-degenerate random variable $Y$ with mean zero and variance $\frac{2}{\lambda^{2}}\in (0,\infty)$, we say that the random variable $Y^{\text{L}}$ has the centered equilibrium distribution with respect to $Y$ if 
	
	\begin{align}\label{PP2:a29}
		\mathbb{E}g(Y)- g(0)=\frac{1}{\lambda^{2}}\mathbb{E}\left( g^{\prime\prime}(Y^{\text{L}}) \right),
	\end{align} 
for all twice differentiable functions $g$ such that $g,g^{\prime}$ and $g^{\prime\prime}$ are bounded. We can the map $Y\to Y^{\text{L}}$ the centered equilibrium transformation.
\end{defn}
\noindent
Note here that, if we consider $g(x)=xf(x) \in \mathcal{S}(\mathbb{R})$, then \eqref{PP2:a29} becomes

\begin{align}\label{PP2:a30}
	\mathbb{E}Yf(Y)=\frac{1}{\lambda^{2}}\mathbb{E} \left(Y^{\text{L}}f^{\prime\prime}(Y^{\text{L}})+2f^{\prime}(Y^{\text{L}})   \right).
\end{align}

\noindent
To derive upper bound for the Laplace approximation in the Wasserstein-type distance, we need the following lemma.

\begin{lem}\label{PP2:lemCED}
	Let $(Y_{n})_{n\geq 1}$ be a sequence of i.i.d random variables such that $\mathbb{E}Y_{i}=0$ and $\mathbb{E}Y_{i}^{2}=\frac{2}{\lambda^{2}}\in (0,\infty)$ and $\sup_{i\in \mathbb{N}}\mathbb{E}|Y_{i}^{3}|=\rho <\infty$, and let $N_{p}\sim Geo (p)$ be a random variable independent of $Y_{i}.$ Then
	\begin{itemize}
		\item [1.] For $S_{p}:=\sqrt{p}\sum_{k=1}^{N_{p}}Y_{k}$, the variable with centered equilibrium distribution has the form $S_{p}^{\text{L}}=\sqrt{p}\left( \sum_{k=1}^{N_{p}}Y_k+Y_{N_{p}+1}^{\text{L}} \right)$.
		
		\item [2.] $\mathbb{E} \left|S_{p}-S_{p}^{\text{L}}  \right|=\frac{\lambda^{2}\rho}{6}\sqrt{p}.$
	\end{itemize}
\end{lem}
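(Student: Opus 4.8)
The plan is to prove the two claims of Lemma~\ref{PP2:lemCED} by a direct computation with characteristic functions, using the defining relation \eqref{PP2:a29} of the centered equilibrium distribution together with the independence structure of the geometric sum. First I would recall that, since the $Y_k$ are i.i.d.\ with mean zero and variance $\tfrac{2}{\lambda^2}$ and $N_p\sim Geo(p)$ is independent of them, the characteristic function of $S_p=\sqrt p\sum_{k=1}^{N_p}Y_k$ is
\begin{equation*}
	\varphi_{S_p}(z)=\sum_{k=0}^{\infty}p(1-p)^{k}\,\varphi_Y(\sqrt p\, z)^{k}=\frac{p}{1-(1-p)\varphi_Y(\sqrt p\, z)},
\end{equation*}
where $\varphi_Y$ is the common characteristic function of the $Y_k$. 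To establish claim~1, I would take $g(x)=e^{izx}$ (or, to stay inside the stated class of test functions, $g$ a suitable bounded $C^2$ approximation and then pass to the limit) in \eqref{PP2:a29}; this identifies the characteristic function of $S_p^{\mathrm L}$ by the identity $\varphi_{S_p}(z)-1=-\tfrac{z^2}{\lambda^2}\varphi_{S_p^{\mathrm L}}(z)$. Then I would independently compute the characteristic function of the candidate variable $\sqrt p\bigl(\sum_{k=1}^{N_p}Y_k+Y_{N_p+1}^{\mathrm L}\bigr)$: conditioning on $N_p$ and using independence gives $\sum_{k\ge0}p(1-p)^k\varphi_Y(\sqrt p z)^k\varphi_{Y^{\mathrm L}}(\sqrt p z)$, and the single-variable relation \eqref{PP2:a29} applied to one $Y$ gives $\varphi_Y(w)-1=-\tfrac{w^2}{\lambda^2}\varphi_{Y^{\mathrm L}}(w)$, i.e.\ $\varphi_{Y^{\mathrm L}}(\sqrt p z)=\tfrac{\lambda^2}{pz^2}\bigl(1-\varphi_Y(\sqrt p z)\bigr)$. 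Substituting and summing the geometric series should reproduce exactly $-\tfrac{\lambda^2}{z^2}\bigl(\varphi_{S_p}(z)-1\bigr)$, which by the uniqueness theorem for characteristic functions yields the claimed distributional identity $S_p^{\mathrm L}\overset{d}{=}\sqrt p\bigl(\sum_{k=1}^{N_p}Y_k+Y_{N_p+1}^{\mathrm L}\bigr)$.

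For claim~2 I would use the coupling furnished by claim~1: realize $S_p^{\mathrm L}$ on the same probability space as $S_p$ by appending an independent copy of $Y_{N_p+1}^{\mathrm L}$ (independent of $N_p$ and of $(Y_k)_{k\ge1}$) to the same geometric sum, so that
\begin{equation*}
	\bigl|S_p-S_p^{\mathrm L}\bigr|=\sqrt p\,\bigl|Y_{N_p+1}^{\mathrm L}\bigr|.
\end{equation*}
Taking expectations and using independence of $N_p$ from $Y_{N_p+1}^{\mathrm L}$ gives $\mathbb E|S_p-S_p^{\mathrm L}|=\sqrt p\,\mathbb E|Y^{\mathrm L}|$, where $Y^{\mathrm L}$ is the centered equilibrium variable associated to a single $Y_k$. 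It then remains to evaluate $\mathbb E|Y^{\mathrm L}|$ in terms of the third absolute moment. Applying \eqref{PP2:a29} with $g$ chosen so that $g''(x)=\operatorname{sgn}(x)$ — concretely $g(x)=\tfrac12|x|x$, handled via a smooth approximation since this $g$ is not $C^2$ — yields $\lambda^{-2}\mathbb E|Y^{\mathrm L}|=\lambda^{-2}\mathbb E\,g''(Y^{\mathrm L})=\mathbb E\,g(Y)-g(0)=\tfrac12\mathbb E|Y|Y$, hmm, which is not quite $\tfrac16\mathbb E|Y|^3$; so instead I would use $g''(x)=x|x|$, i.e.\ $g(x)=\tfrac13 x^2|x|\cdot\tfrac{}{}$ — more carefully, take $g$ with $g''(x)=\tfrac{|x|^3}{?}$; the right choice is $g''(x)=x|x|/2$ giving $g(x)=\tfrac{|x|^3}{6}$, whence $\lambda^{-2}\mathbb E\bigl(Y^{\mathrm L}|Y^{\mathrm L}|/2\bigr)=\mathbb E|Y|^3/6$. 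Since $Y^{\mathrm L}$ has a density that is proportional to the tail function of $Y$ and is therefore nonnegative only when... — here one uses that the centered equilibrium density is $p_{Y^{\mathrm L}}(t)=\tfrac{\lambda^2}{2}\bigl(\mathbb E(Y-t)\mathbf 1_{Y>t}-\mathbb E(Y-t)\mathbf 1_{Y<t}\bigr)\ge 0$, so that $|Y^{\mathrm L}|$ and a signed version agree appropriately — leading to $\mathbb E|Y^{\mathrm L}|=\tfrac{\lambda^2}{6}\mathbb E|Y|^3\le\tfrac{\lambda^2}{6}\rho$, hence $\mathbb E|S_p-S_p^{\mathrm L}|=\tfrac{\lambda^2\rho}{6}\sqrt p$.

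The main obstacle I anticipate is the legitimacy of plugging non-smooth test functions (the exponential $e^{izx}$ for the characteristic-function identification, and the piecewise-polynomial $g$ for the moment computation) into \eqref{PP2:a29}, which is stated only for bounded $g\in C^2$ with bounded derivatives. This is resolved by a standard mollification argument: approximate by smooth compactly supported functions, apply \eqref{PP2:a29}, and pass to the limit using the finite third moment hypothesis $\sup_i\mathbb E|Y_i|^3=\rho<\infty$ to justify dominated convergence; the boundedness of the centered equilibrium density and its explicit form (from \cite{pike}) make the limiting arguments routine. A secondary point requiring care is checking that the candidate density $p_{Y^{\mathrm L}}$ is genuinely a probability density (nonnegative, integrating to $1$), but this is exactly the content of the definition cited from \cite[p.9]{pike}, so it may be invoked directly. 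Everything else is bookkeeping: summation of geometric series, conditioning on $N_p$, and the elementary identity relating $\mathbb E\,g(Y)-g(0)$ to moments of $Y$.
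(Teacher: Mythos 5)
The paper does not actually prove this lemma: it defers to \cite[Lemma 4.1]{slepov} and \cite[Proposition 3.4]{pike}, which establish the coupling by conditioning on $N_p$ and applying the defining identity \eqref{PP2:a29} directly. Your characteristic-function route for part~1 and your coupling argument for part~2 are legitimate in outline, but two steps are defective.

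First, the verification of part~1 that you assert ``should reproduce exactly'' the required characteristic function does not close. With the paper's convention $\mathbb{P}(N_p=k)=p(1-p)^k$, $k\ge 0$, a short computation gives $\varphi_{S_p}(z)\,\varphi_{Y^{\mathrm L}}(\sqrt p\,z)=\frac{\lambda^2}{(1-p)z^2}\bigl(1-\varphi_{S_p}(z)\bigr)$, i.e.\ an extra factor $1-p$ relative to what the definition of $S_p^{\mathrm L}$ with constant $\lambda^{-2}$ demands. The reason is that $\operatorname{Var}(S_p)=p\,\mathbb{E}(N_p)\,\mathbb{E}(Y^2)=(1-p)\cdot\frac{2}{\lambda^2}\ne\frac{2}{\lambda^2}$, so $S_p$ does not satisfy the variance hypothesis of the definition with the same $\lambda$; the candidate variable is the centered equilibrium transform of $S_p$ only with constant $\frac{1-p}{\lambda^2}$. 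This mismatch is inherited from the paper (the cited references take $N_p$ supported on $\{1,2,\dots\}$, for which $\operatorname{Var}(S_p)=\frac{2}{\lambda^2}$), but a blind proof must either switch to that convention or carry the factor $1-p$ through; asserting the cancellation without computing it conceals a step that fails as stated.

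Second, your evaluation of $\mathbb{E}|Y^{\mathrm L}|$ is garbled. You first take $g''=\operatorname{sgn}$ and write $\mathbb{E}\,g''(Y^{\mathrm L})=\mathbb{E}|Y^{\mathrm L}|$, which is false (it equals $\mathbb{E}\operatorname{sgn}(Y^{\mathrm L})$), and you then settle on ``$g''(x)=x|x|/2$ giving $g(x)=|x|^3/6$,'' which confuses $g'$ with $g''$: for $g(x)=|x|^3/6$ one has $g'(x)=x|x|/2$ and $g''(x)=|x|$. The quantity $\tfrac12\mathbb{E}\bigl(Y^{\mathrm L}|Y^{\mathrm L}|\bigr)$ you end up with is signed and is not $\mathbb{E}|Y^{\mathrm L}|$, and the appeal to the shape of the equilibrium density does not repair this. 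The correct one-line argument is: apply \eqref{PP2:a29} to (a mollified, truncated version of) $g(x)=|x|^3/6$, justified by $\rho<\infty$, to get $\lambda^{-2}\mathbb{E}|Y^{\mathrm L}|=\tfrac16\mathbb{E}|Y|^3=\tfrac{\rho}{6}$; combined with $|S_p-S_p^{\mathrm L}|=\sqrt p\,|Y^{\mathrm L}_{N_p+1}|$ and independence this yields $\mathbb{E}|S_p-S_p^{\mathrm L}|=\frac{\lambda^2\rho}{6}\sqrt p$, with equality (not just $\le$) because the $Y_i$ are i.i.d., so the supremum defining $\rho$ is attained.
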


\noindent
The proof of this lemma follows by similar computations \cite[Lemma 4.1]{slepov} and \cite[Proposition 3.4]{pike}, respectively.\vspace{2mm}

\noindent
Next, we establish a corollary to Theorem \ref{th3}, which is used in deriving rate of convergence for the Laplace approximation of geometric sums.

\begin{cor}
	\noindent
For $h\in\mathcal{H}_4$, let $f_{h}$ be defined in \eqref{PP2:SolSe}. Let $\alpha^{+}=\alpha^{-}=1$, $\beta^{+}=\beta^{-}=0$ and $\lambda^{+}=\lambda^{-}=\lambda$. Define $A_0f_{h}(x)=xf_{h}(x).$ Then, for any $x,y\in\mathbb{R}$
	\begin{align}\label{PP2:pr3}
		\| (A_0f_{h}(x))^{\prime\prime}-(A_0f_{h}(y))^{\prime\prime} \| \leq \left(\|h^{(3)}\|+\frac{\|h^{(4)}\|}{2\lambda} \right)|x-y|.
	\end{align}
\end{cor}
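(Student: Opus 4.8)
The plan is to compute $(A_0 f_h)^{\prime\prime}$ explicitly using the Stein equation \eqref{PP2:e15} satisfied by $f_h$, and then bound the resulting Lipschitz-type difference using the estimates from Theorem~\ref{th3}. Since $A_0 f_h(x) = x f_h(x)$, we have $(A_0 f_h(x))^{\prime\prime} = x f_h^{\prime\prime}(x) + 2 f_h^{\prime}(x)$. The first step is therefore to obtain a workable expression for $x f_h^{\prime\prime}(x)$. Exactly as in the derivation of \eqref{PP2:e36}, differentiating \eqref{PP2:e35} twice (with the symmetric choice $\alpha^{+}=\alpha^{-}=1$, $\beta^{+}=\beta^{-}=0$, $\lambda^{+}=\lambda^{-}=\lambda$) gives
\[
-x f_h^{\prime\prime}(x) = h^{(2)}(x) + 2 f_h^{\prime}(x) - \int_0^\infty e^{-\lambda u}\left( f_h^{\prime\prime}(x+u) - f_h^{\prime\prime}(x-u) \right) du,
\]
so that
\[
(A_0 f_h(x))^{\prime\prime} = x f_h^{\prime\prime}(x) + 2 f_h^{\prime}(x) = -h^{(2)}(x) + \int_0^\infty e^{-\lambda u}\left( f_h^{\prime\prime}(x+u) - f_h^{\prime\prime}(x-u) \right) du.
\]

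The second step is to take the difference at two points $x, y \in \mathbb{R}$ and estimate term by term. The contribution of $-h^{(2)}$ is controlled by $\|h^{(3)}\| |x-y|$ since $h \in \mathcal{H}_4$. For the integral term, I would write
\[
\left| \int_0^\infty e^{-\lambda u}\Big[ \big(f_h^{\prime\prime}(x+u) - f_h^{\prime\prime}(y+u)\big) - \big(f_h^{\prime\prime}(x-u) - f_h^{\prime\prime}(y-u)\big) \Big] du \right|
\]
and apply the Lipschitz bound \eqref{PP2:pr02} for $f_h^{\prime\prime}$, namely $|f_h^{\prime\prime}(a) - f_h^{\prime\prime}(b)| \le \frac{\|h^{(4)}\|}{4} |a-b|$, to each of the two bracketed differences. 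This gives an integrand bounded by $e^{-\lambda u} \cdot \frac{\|h^{(4)}\|}{4} |x-y| \cdot 2 = \frac{\|h^{(4)}\|}{2} |x-y| e^{-\lambda u}$, and $\int_0^\infty e^{-\lambda u} du = 1/\lambda$ yields $\frac{\|h^{(4)}\|}{2\lambda} |x-y|$. Adding the two contributions produces the claimed bound $\left( \|h^{(3)}\| + \frac{\|h^{(4)}\|}{2\lambda} \right) |x-y|$.

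There is no serious obstacle here; the only mild point of care is the bookkeeping in the double differentiation of \eqref{PP2:e35}, i.e. checking that differentiating the convolution $\int_{\mathbb{R}} f_h(x+u)\nu(du)$ under the integral sign is justified and produces exactly the $+2f_h^{\prime}(x)$ boundary term together with the symmetrized second-derivative integral — this is the same computation already carried out to obtain \eqref{PP2:e36}, and the differentiability of $f_h$ up to the needed order is guaranteed by Theorem~\ref{th3}. Once the expression for $(A_0 f_h)^{\prime\prime}$ is in hand, the estimate is a routine application of \eqref{PP2:pr02} and the triangle inequality, so the bulk of the work is really the first step.
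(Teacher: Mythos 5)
Your proposal is correct and follows essentially the same route as the paper: differentiate the Stein equation \eqref{PP2:e35} twice to express $(A_0f_h(x))^{\prime\prime}$ as $-h^{(2)}(x)$ plus the symmetrized exponential integral of $f_h^{\prime\prime}$, then apply the triangle inequality together with the Lipschitz estimate \eqref{PP2:pr02} and $\int_0^\infty e^{-\lambda u}\,du=1/\lambda$. The term-by-term bookkeeping ($\|h^{(3)}\|\,|x-y|$ plus twice $\tfrac{\|h^{(4)}\|}{4\lambda}|x-y|$) matches the paper's computation exactly.
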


\begin{proof}

	\noindent
	Recall that, the L\'evy measure for TSD($\alpha^{+},\beta^{+},\lambda^{+};\alpha^{-},\beta^{-},\lambda^{-}$) given in \eqref{e2}. Let $\alpha^{+}=\alpha^{-}=1$, $\beta^{+}=\beta^{-}=0$ and $\lambda^{+}=\lambda^{-}=\lambda$. Define $A_0f_{h}(x)=xf_{h}(x).$ Then,  differentiating \eqref{PP2:e35} twice with respect to $x$, we have 
	
	$$(A_0f_{h}(x))^{\prime\prime}=-h^{(2)}(x)+\displaystyle\int_{0}^{\infty}e^{-\lambda u} \left(f_{h}^{\prime\prime}(x+u)- f_{h}^{\prime\prime}(x-u)  \right)du.$$
	
	\noindent
	For any $x,y\in\mathbb{R},$
	
	\begin{align}
		\nonumber	\left|(A_0f_{h}(x))^{\prime\prime}-(A_0f_{h}(y))^{\prime\prime}  \right| &\leq \left|h^{(2)}(x)-h^{(2)}(y)  \right|+\left|\displaystyle\int_{0}^{\infty}e^{-\lambda u} (f_{h}^{\prime\prime}(x+u)-f_{h}^{\prime\prime}(y+u))du  \right|\\
		\nonumber	&+\left|\displaystyle\int_{0}^{\infty}e^{-\lambda u}(f_{h}^{\prime\prime}(x-u)-f_{h}^{\prime\prime}(y-u)) du \right|\\
		\nonumber	& \leq \|h^{(3)}\||x-y|+\frac{\|h^{(4)}\|}{4\lambda}|x-y|+\frac{\|h^{(4)}\|}{4\lambda}|x-y|\\
		&=\left(\|h^{(3)}\|+\frac{\|h^{(4)}\|}{2\lambda} \right)|x-y|,
	\end{align}

	\noindent
	where the last but one inequality follows by using \eqref{PP2:pr02}.

\end{proof}

\noindent
In the following theorem, we obtain an error bound for the Laplace approximation of random geometric sums.

\begin{thm}
	Let $(Y)_{n\geq 1}$ be a sequence of i.i.d random variables with $\mathbb{E}Y_i=0$, $\mathbb{E}Y_{i}^{2}=\frac{2}{\lambda^{2}}\in (0,\infty)$ and let $N_p \sim Geo(p)$ be independent of $Y_{i}$ with probability mass function $\mathbb{P}(N_{p}=k)=p(1-p)^{k};$ $k=0,1,2,\ldots,$ $0<p<1$. Also let $X\sim Laplace (0,\frac{1}{\lambda^{2}})$ and denote $S_{p}:=\sqrt{p}\sum_{i=1}^{N_{p}}Y_{i}$. Then, for any $\lambda>1$,
	\begin{align}\label{PP2:rc}
		d_{W_{4}}(S_{p},X) \leq  \frac{\rho\lambda(2\lambda +1)}{12(\lambda^{2}-1)} p^{\frac{1}{2}}.
	\end{align}
\end{thm}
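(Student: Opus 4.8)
The strategy is to run the standard Stein's-method comparison argument for $S_p$ versus the Laplace target $X\sim\mathrm{Laplace}(0,\tfrac1{\lambda^2})\overset{d}{=}\mathrm{TSD}(1,0,\lambda;1,0,\lambda)$, feeding in the centered equilibrium transformation of Lemma~\ref{PP2:lemCED} in place of the usual coupling. First I would fix $h\in\mathcal H_4$ and let $f_h$ be the solution to the Stein equation \eqref{PP2:e15} given by \eqref{PP2:SolSe}, so that $\mathbb E h(S_p)-\mathbb E h(X)=\mathbb E\big[\mathcal A_X f_h(S_p)\big]$. Writing out $\mathcal A_X$ for the symmetric Laplace L\'evy measure and integrating by parts twice (exactly as in the derivation \eqref{PP2:a24}--\eqref{PP2:a25} in the two-TSD comparison, with $\alpha_1=1$, $\lambda_1=\lambda$) rewrites this expectation, modulo the $\frac1{\lambda^2}\mathbb E[\,\cdot\,]$ remainder term which vanishes because the Laplace Stein identity \eqref{PP2:a27} holds for $Y\sim\mathrm{TSD}(1,0,\lambda;1,0,\lambda)$, in the form
\begin{align*}
\Big(1-\tfrac1{\lambda^2}\Big)\big(\mathbb E h(S_p)-\mathbb E h(X)\big)=\mathbb E\Big(-S_p f_h(S_p)+\tfrac{2}{\lambda^2}f_h'(S_p)+\tfrac1{\lambda^2}S_p f_h''(S_p)\Big).
\end{align*}

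\textbf{Key steps.} Next I would recognize the right-hand side in terms of the centered equilibrium identity \eqref{PP2:a30}: applying it with $f\rightsquigarrow f_h$ gives $\mathbb E[S_p f_h(S_p)]=\tfrac1{\lambda^2}\mathbb E\big(S_p^{\mathrm L}f_h''(S_p^{\mathrm L})+2f_h'(S_p^{\mathrm L})\big)$. Substituting, the $f_h'$ terms and (after introducing $A_0 f_h(x)=xf_h(x)$) the $S_pf_h''$ terms reorganize into a single difference
\begin{align*}
\Big(1-\tfrac1{\lambda^2}\Big)\big(\mathbb E h(S_p)-\mathbb E h(X)\big)=\tfrac1{\lambda^2}\,\mathbb E\Big((A_0 f_h)''(S_p)-(A_0 f_h)''(S_p^{\mathrm L})\Big),
\end{align*}
using $(A_0f_h)''=2f_h'+\,x f_h''$ so that the combination $\tfrac2{\lambda^2}f_h'(\cdot)+\tfrac1{\lambda^2}(\cdot)f_h''(\cdot)$ collapses appropriately. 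Then I bound $\big|(A_0f_h)''(S_p)-(A_0f_h)''(S_p^{\mathrm L})\big|\le\big(\|h^{(3)}\|+\tfrac{\|h^{(4)}\|}{2\lambda}\big)|S_p-S_p^{\mathrm L}|$ by \eqref{PP2:pr3}, take expectations, and invoke part~2 of Lemma~\ref{PP2:lemCED} to get $\mathbb E|S_p-S_p^{\mathrm L}|=\tfrac{\lambda^2\rho}{6}\sqrt p$. With $\|h^{(k)}\|\le1$ for $k=3,4$ this yields
\begin{align*}
\Big(1-\tfrac1{\lambda^2}\Big)\big|\mathbb E h(S_p)-\mathbb E h(X)\big|\le\tfrac1{\lambda^2}\Big(1+\tfrac1{2\lambda}\Big)\cdot\tfrac{\lambda^2\rho}{6}\sqrt p=\tfrac{\rho(2\lambda+1)}{12\lambda}\sqrt p,
\end{align*}
and multiplying through by $\lambda^2/(\lambda^2-1)$ and taking the supremum over $h\in\mathcal H_4$ gives exactly \eqref{PP2:rc}.

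\textbf{Main obstacle.} The delicate point is the algebraic bookkeeping in the second step: one must verify that the combination $-S_pf_h(S_p)+\tfrac2{\lambda^2}f_h'(S_p)+\tfrac1{\lambda^2}S_pf_h''(S_p)$, after substituting the centered equilibrium identity \eqref{PP2:a30} for the first term, telescopes cleanly into $\tfrac1{\lambda^2}\big[(A_0f_h)''(S_p)-(A_0f_h)''(S_p^{\mathrm L})\big]$ with no leftover terms — in particular that the second-derivative corollary is applied to $A_0f_h=xf_h(x)$ rather than to $f_h$ itself, which is why the bound \eqref{PP2:pr3} (and not merely \eqref{PP2:pr02}) is the right input. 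The remaining ingredients — existence and well-definedness of $f_h$ with the stated estimates, and the properties of $S_p^{\mathrm L}$ — are supplied by Theorems~\ref{thmsol}--\ref{th3} and Lemma~\ref{PP2:lemCED}, so no new analytic difficulty arises there; the factor $\lambda>1$ is used only to keep $1-\lambda^{-2}>0$ when dividing.
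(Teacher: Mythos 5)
Your proposal follows the paper's route step for step — Stein equation for $\mathrm{TSD}(1,0,\lambda;1,0,\lambda)$, two integrations by parts, the centered equilibrium identity \eqref{PP2:a30}, the telescoping of $\tfrac{2}{\lambda^{2}}f_h'(\cdot)+\tfrac{1}{\lambda^{2}}(\cdot)f_h''(\cdot)$ into $\tfrac{1}{\lambda^{2}}(A_0f_h)''$, the Lipschitz bound \eqref{PP2:pr3}, and Lemma \ref{PP2:lemCED} — and your closing arithmetic and constant are exactly the paper's. However, there is a genuine error in how you dispose of the remainder term $\tfrac{1}{\lambda^{2}}\mathbb{E}\bigl(-S_{p}f^{\prime\prime}_{h}(S_{p})+\int_{\mathbb{R}}f_{h}^{\prime\prime}(S_{p}+u)\nu(du)\bigr)$. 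You claim it vanishes "because the Laplace Stein identity \eqref{PP2:a27} holds for $Y\sim\mathrm{TSD}(1,0,\lambda;1,0,\lambda)$". That identity is valid only when the variable inside the expectation is \emph{exactly} the target distribution; here the expectation is under $S_p$, which is only approximately Laplace — indeed, if the Stein identity held for $S_p$ you would already have $d_{W_4}(S_p,X)=0$ and nothing to prove. The remainder is $\tfrac{1}{\lambda^{2}}\mathbb{E}\bigl[\mathcal{A}_X(f_h'')(S_p)\bigr]$ and does not vanish; what the paper does (tersely, in passing from \eqref{PP2:a31} to \eqref{PP2:a32}) is bound it, after taking the supremum over $h$, by $\tfrac{1}{\lambda^{2}}d_{W_{4}}(S_{p},X)$ and move it to the left-hand side. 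That absorption is the \emph{sole} source of the prefactor $\bigl(1-\lambda^{-2}\bigr)$ and of the hypothesis $\lambda>1$. Your display is therefore internally inconsistent: it carries the prefactor $\bigl(1-\lambda^{-2}\bigr)$, which cannot arise if the remainder genuinely vanished, and it is stated as an equality for each fixed $h$ when the correct statement is an inequality between $d_{W_4}$ and a supremum.

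Once that step is repaired — replace "the remainder vanishes" by "the remainder equals $\tfrac{1}{\lambda^{2}}$ times the Stein operator applied to $f_h''$ along $S_p$, is controlled by $\tfrac{1}{\lambda^{2}}d_{W_4}(S_p,X)$, and is moved to the left" — the rest of your argument is sound and coincides with the paper's: the identification of $\mathbb{E}[S_pf_h(S_p)]$ with $\tfrac{1}{\lambda^{2}}\mathbb{E}\bigl[(A_0f_h)''(S_p^{\mathrm L})\bigr]$ via \eqref{PP2:a30}, the clean difference $\tfrac{1}{\lambda^{2}}\mathbb{E}\bigl[(A_0f_h)''(S_p)-(A_0f_h)''(S_p^{\mathrm L})\bigr]$ (which you actually state more transparently than the paper, whose displays \eqref{PP2:a32}--\eqref{PP2:a33} drop the factor $S_p$ in front of $f_h''$), and the final application of \eqref{PP2:pr3} and Lemma \ref{PP2:lemCED}.
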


\begin{proof}
	Recall first that $Laplace(0,\frac{1}{\lambda^{2}})\overset{d}{=}\text{TSD}(1,0,\lambda;1,0,\lambda)$.
	
	\noindent
	Thus, using \eqref{PP2:e15}, we have
	\begin{align}
	\nonumber	\mathbb{E}\left(h(S_{p})-h(X)  \right)&=\mathbb{E}\left(-S_{p}f_{h}(S_{p})+\displaystyle\int_{\mathbb{R}}f_{h}(S_{p}+u)\nu(du)  \right)\\
	\nonumber	&=\mathbb{E}\left(-S_{p}f_{h}(S_{p})+ \displaystyle\int_{0}^{\infty}\left(f_{h}(S_{p}+u)-f_{h}(S_{p}-u) \right)e^{-\lambda u}du \right)\\
		\nonumber	&=\mathbb{E}\left(-S_{p}f_{h}(S_{p})+\frac{1}{\lambda}\displaystyle\int_{0}^{\infty}\left(f_{h}^{\prime}(S_{p}+u)+f_{h}^{\prime}(S_{p}-u) \right) e^{-\lambda u}du \right)\\
		\nonumber	&=\mathbb{E}\left(-S_{p}f_{h}(S_{p})+\frac{2}{\lambda^{2}}f_{h}^{\prime}(S_{p})+ \frac{1}{\lambda^{2}}\displaystyle\int_{0}^{\infty}\left(f_{h}^{\prime\prime}(S_{p}+u)-f_{h}^{\prime\prime}(S_{p}-u) \right) e^{-\lambda u}du \right)\\
		\nonumber	&=\mathbb{E}\left(-S_{p}f_{h}(S_{p})+\frac{2}{\lambda^{2}}f_{h}^{\prime}(S_{p})+\frac{1}{\lambda^{2}}S_{p}f_{h}^{\prime\prime}(S_{p})\right)\\
		\nonumber&  + \frac{1}{\lambda^{2}}\mathbb{E}\left(-S_{p}f^{\prime\prime}_{h}(S_{p})+\displaystyle\int_{0}^{\infty}\left(f_{h}^{\prime\prime}(S_{p}+u)-f_{h}^{\prime\prime}(S_{p}-u) \right) e^{-\lambda u}du\right)\\
	\nonumber	&=\mathbb{E}\left(-S_{p}f_{h}(S_{p})+\frac{2}{\lambda^{2}}f_{h}^{\prime}(S_{p})+\frac{1}{\lambda^{2}}S_{p}f_{h}^{\prime\prime}(S_{p})\right)\\&  + \frac{1}{\lambda^{2}}\mathbb{E}\left(-S_{p}f^{\prime\prime}_{h}(S_{p})+\displaystyle\int_{\mathbb{R}}f_{h}^{\prime\prime}(S_{p}+u)\nu(du)\right)\label{PP2:a31}
	\end{align} 

\noindent
Taking $\sup_{h\in\mathcal{H}_{4}}$ on both side of \eqref{PP2:a31} and rearranging the terms, we have

\begin{align}\label{PP2:a32}
	\left(1-\frac{1}{\lambda^{2}} \right)d_{W_{4}}(S_{p},X) \leq \sup_{h\in\mathcal{H}_{4}}\left|\mathbb{E}\left(-S_{p}f_{h}(S_{p})+\frac{2}{\lambda^{2}}f_{h}^{\prime}(S_{p})+\frac{1}{\lambda^{2}}f_{h}^{\prime\prime}(S_{p})   \right)  \right|
\end{align}

\noindent
Using \eqref{PP2:a30} in \eqref{PP2:a32}, we have

\begin{align}
\nonumber	d_{W_{4}}(S_{p},X)  &\leq \sup_{h\in\mathcal{H}_{4}}\frac{\lambda^{2}}{\lambda^{2}-1}\left(\mathbb{E}\left[\frac{2}{\lambda^{2}}f_{h}^{\prime}(S_{p})+\frac{1}{\lambda^{2}}f_{h}^{\prime\prime}(S_{p})\right]-\mathbb{E}\left[\frac{2}{\lambda^{2}}f_{h}^{\prime}(S_{p}^{\text{L}})+\frac{1}{\lambda^{2}}f_{h}^{\prime\prime}(S_{p}^{\text{L}})\right]   \right)\\
	&=\sup_{h\in\mathcal{H}_{4}}\frac{1}{\lambda^{2}-1}\left(\mathbb{E}\left[2f_{h}^{\prime}(S_{p})+f_{h}^{\prime\prime}(S_{p})\right]-\mathbb{E}\left[2f_{h}^{\prime}(S_{p}^{\text{L}})+f_{h}^{\prime\prime}(S_{p}^{\text{L}})\right]   \right)\label{PP2:a33}
\end{align}

\noindent
Assume that $\|h^{(k)}\|\leq 1$ for $k=3,4$. Then, by using \eqref{PP2:pr3} on \eqref{PP2:a33}, we have
\begin{align*}
	\nonumber	d_{W_{4}}(S_{p},X)  &\leq \frac{1}{\lambda^{2}-1}\left( 1+\frac{1}{2\lambda}\right)\mathbb{E}\left| S_{p}-S_{p}^{\text{L}} \right|\\
	&\leq \frac{2\lambda +1}{2\lambda(\lambda^{2}-1)}\times \frac{\lambda^{2}}{6}\rho p^{\frac{1}{2}} ~~(\text{by Lemma \ref{PP2:lemCED}})\\
	&=\frac{\rho\lambda(2\lambda +1)}{12(\lambda^{2}-1)} p^{\frac{1}{2}},
\end{align*}

\noindent
the desired conclusion follows.
\end{proof}
\begin{rem}
	 The reference \cite{pike} shows that $d_{BL}(S_{p},X)\leq C_{1}(\lambda,\rho)p^{\frac{1}{2}}$, where $C_{1}(\lambda,\rho)$ is some positive constant depends on $\lambda$ and $\rho$, and $d_{BL}$ denotes the bounded Lipschitz distance. Gaunt \cite{kk2} also proves that $W_{1}(S_{p},X) \leq C_{2}(\lambda,\rho)p^{\frac{1}{2}},$ where $C_{2}(\lambda,\rho)$ is a positive constant depends on $\lambda$ and $\rho$. In comparison with the rates derived in \cite{pike,kk2}, we see that the $O(p^{\frac{1}{2}})$ rate in \eqref{PP2:rc} is optimal.

\end{rem}

\subsection{ Six moment theorem for the symmetric variance gamma approximation of double Wiener-It$\hat{\text{o}}$ integrals}

Recently, Eichelsbacher and Th$\ddot{\text{a}}$le \cite{key2} extended the Malliavin-Stein method for variance gamma distributions. Here, we obtain an upper bound for the symmetric variance gamma approximation of general functional of an isonormal Gaussian process in the Wasserstein-type distance. We also prove the six moment theorem for the symmetric variance gamma approximation of double Wiener-It$\hat{\text{o}}$ integrals.\vspace{2mm}

\noindent
Let us first introduce some notation (see, the book \cite{nourdin} for detailed discussion). Let $\mathbb{D}^{p,q}$ be the Banach space of all functions in $L^{q}(\gamma)$, where $\gamma$ is the standard Gaussian measure, whose Malliavin derivative up to order $p$ also belong to $L^{q}(\gamma)$. Let $\mathbb{D}^{\infty}$ be the class of infinitely many times Malliavin differentiable random variables. We also introduce the well-known $\Gamma$-operators (see, \cite{nourdin0}). For a random variable $G\in\mathbb{D}^{\infty}$, we define $\Gamma_1(G)=G$, and for every $j\geq2,$

$$\Gamma_{j}(G)=\langle DG,-DL^{-1} \Gamma_{j-1}(G)\rangle_\mathfrak{H}.$$

\noindent
Here $D$ is the Malliavin derivative, $L^{-1}$ is the pseudo-inverse of the infinitesimal generator of the Ornstein-Uhlenbeck semigroup, and $\mathfrak{H}$ is a real separable Hilbert space. Lastly, for $f\in\mathfrak{H}^{\odot 2 }$, we write $I_{2}(f)$ for the double Wiener-It$\hat{\text{o}}$ integral of $f$.

\begin{thm}
	Let $G\in \mathbb{D}^{2,4}$ be such that $\mathbb{E}(G)=0$ and let $X\sim\text{VGD}_{1}(0,\alpha,\lambda,\lambda)$. Then, for $\lambda>1$
	
	\begin{align}
		d_{W_{3}}(G,X) \leq \frac{\lambda^{2}}{3(\lambda^{2}-1)}\mathbb{E}\left| \frac{1}{\lambda^{2}}G-\Gamma_{3}(G)  \right|+\frac{\lambda^{2}}{2(\lambda^{2}-1)}\left|\frac{2\alpha}{\lambda^{2}}-\mathbb{E}\left(\Gamma_{2}(G) \right)  \right| \label{PP2:smt1}
	\end{align}
\end{thm}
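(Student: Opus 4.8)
The plan is to run the Malliavin--Stein machinery against the Stein identity for the symmetric variance-gamma distribution established earlier in the paper. First I would specialize Corollary~\ref{corvg} to $X\sim\text{VGD}_1(0,\alpha,\lambda,\lambda)$: here $\sigma^2=1/\lambda^2$, $r=2\alpha$, $\theta=0$, so the Stein operator reads
\begin{align*}
\mathcal{A}(f)(x)=\frac{1}{\lambda^{2}}xf^{\prime\prime}(x)+\frac{2\alpha}{\lambda^{2}}f^{\prime}(x)-xf(x),
\end{align*}
and the associated Stein equation is $\mathcal{A}(f_h)(x)=h(x)-\mathbb{E}h(X)$ with $f_h$ given by \eqref{PP2:SolSe}. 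Plugging in $G$ and taking expectations gives
\begin{align*}
\mathbb{E}\big(h(G)-h(X)\big)=\mathbb{E}\left(\frac{1}{\lambda^{2}}Gf_h^{\prime\prime}(G)+\frac{2\alpha}{\lambda^{2}}f_h^{\prime}(G)-Gf_h(G)\right).
\end{align*}
Next I would use the Malliavin integration-by-parts formula $\mathbb{E}(Gg(G))=\mathbb{E}(g^{\prime}(G)\Gamma_1(G))=\mathbb{E}(g^{\prime}(G)\langle DG,-DL^{-1}G\rangle_{\mathfrak H})$ and its iterates: applying it to $g=f_h$ converts $\mathbb{E}(Gf_h(G))$ into an expression involving $\Gamma_2(G)$, and applying it once more (to the term produced by $f_h^{\prime}$) brings in $\Gamma_3(G)$. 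After collecting terms, the $f_h^{\prime}$ contributions combine so that the factor $\frac{2\alpha}{\lambda^2}$ is matched against $\mathbb{E}(\Gamma_2(G))$, while the second-derivative terms produce the difference $\frac{1}{\lambda^2}G-\Gamma_3(G)$ tested against $f_h^{\prime\prime}$; this is precisely the structure that would make the identity of the symmetric VGD appear as the "zero-th order" piece and leave only the two error functionals.

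Once the representation
\begin{align*}
\mathbb{E}\big(h(G)-h(X)\big)=\frac{1}{\lambda^{2}}\mathbb{E}\!\left[\Big(\tfrac{1}{\lambda^{2}}G-\Gamma_3(G)\Big)f_h^{\prime\prime}(G)\right]+\frac{1}{\lambda^{2}}\mathbb{E}\!\left[\Big(\tfrac{2\alpha}{\lambda^{2}}-\Gamma_2(G)\Big)f_h^{\prime}(G)\right]+\frac{1}{\lambda^{2}}\mathbb{E}\big(Gf_h^{\prime\prime}(G)\big)
\end{align*}
(or an equivalent rearrangement) is in hand, I would isolate the term $\frac{1}{\lambda^2}\mathbb{E}(Gf_h^{\prime\prime}(G))$ on the left-hand side together with $\mathbb{E}(h(G)-h(X))$, exactly as in the proof of the two-TSD comparison theorem and the Laplace theorem above, to pull out the factor $(1-1/\lambda^2)$. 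Taking $\sup_{h\in\mathcal{H}_3}$ and using the bounds $\|f_h^{\prime}\|\le\frac12\|h^{(2)}\|\le\frac12$ and $\|f_h^{\prime\prime}\|\le\frac13\|h^{(3)}\|\le\frac13$ from \eqref{PP2:pr1} then yields
\begin{align*}
\Big(1-\frac{1}{\lambda^{2}}\Big)d_{W_3}(G,X)\leq\frac{1}{3\lambda^{2}}\,\mathbb{E}\left|\frac{1}{\lambda^{2}}G-\Gamma_3(G)\right|+\frac{1}{2\lambda^{2}}\left|\frac{2\alpha}{\lambda^{2}}-\mathbb{E}\big(\Gamma_2(G)\big)\right|,
\end{align*}
and multiplying through by $\lambda^2/(\lambda^2-1)$ gives \eqref{PP2:smt1}.

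The main obstacle I anticipate is the bookkeeping in the iterated integration-by-parts step: one has to justify that $f_h\in\mathbb{D}^{\infty}$-testable (i.e. that $f_h$, $f_h^{\prime}$, $f_h^{\prime\prime}$ are bounded, which follows from Theorem~\ref{th3} since $h\in\mathcal{H}_3$ controls $h^{(1)},h^{(2)},h^{(3)}$, and that $G\in\mathbb{D}^{2,4}$ suffices for the two levels of Malliavin IBP used), and then to track carefully how the $\Gamma_2$ and $\Gamma_3$ terms emerge with the correct constants $\frac{2\alpha}{\lambda^2}$ and $\frac{1}{\lambda^2}$ so that the symmetric VGD Stein identity is exactly cancelled. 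A secondary point is ensuring that all expectations are finite under the stated moment hypothesis; for this I would invoke the standard hypercontractivity/chaos estimates recalled in \cite{nourdin}. The passage from the general functional bound to the six moment theorem for $G=I_2(f)$ would then be a routine specialization, using the known formulas expressing $\mathbb{E}(\Gamma_2(G))$ and $\mathbb{E}(\Gamma_3(G))$ of a double integral in terms of contraction norms / moments of $G$ up to order six.
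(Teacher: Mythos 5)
Your overall architecture is the one the paper uses: pass from the Stein equation to a second-order differential expression, apply the Malliavin $\Gamma$-operator calculus (the paper simply cites the computation from the proof of Eichelsbacher--Th\"ale's Theorem 4.1 rather than redoing the integration by parts), absorb a remainder to produce the factor $(1-1/\lambda^{2})$, and finish with the derivative bounds of Theorem \ref{th3}. However, there is a genuine gap at your starting point. The function $f_h$ of \eqref{PP2:SolSe} solves the \emph{integral} Stein equation \eqref{PP2:e15}, $-xf_h(x)+\int_{\mathbb R}f_h(x+u)\nu(du)=h(x)-\mathbb E h(X)$, not the differential equation $\frac{1}{\lambda^{2}}xf_h''(x)+\frac{2\alpha}{\lambda^{2}}f_h'(x)-xf_h(x)=h(x)-\mathbb E h(X)$ that you write down from Corollary \ref{corvg}; that corollary is a characterizing identity for $X$ itself, not a statement that the two operators coincide pointwise. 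For the symmetric parameters the two operators are related by two classical integrations by parts in $u$, which produce exactly the remainder $\frac{1}{\lambda^{2}}\mathbb E\bigl(-Gf_h''(G)+\int_{\mathbb R}f_h''(G+u)\nu(du)\bigr)$ appearing in \eqref{PP2:a34}; it is \emph{this} term, not $\frac{1}{\lambda^{2}}\mathbb E(Gf_h''(G))$, that is moved to the left-hand side and bounded by $\frac{1}{\lambda^{2}}d_{W_3}(G,X)$ to give \eqref{PP2:a35}. If your premise were correct and $f_h$ solved the differential equation exactly, there would be nothing to absorb, no factor $(1-1/\lambda^{2})$, and no reason for the hypothesis $\lambda>1$.

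This confusion propagates into your coefficients. The Malliavin expansion $\mathbb E(Gf_h(G))=\mathbb E(f_h'(G))\,\mathbb E(\Gamma_2(G))+\mathbb E(f_h''(G)\Gamma_3(G))$ combines with the term $\frac{1}{\lambda^{2}}\mathbb E(Gf_h''(G))$ to give $\mathbb E\bigl(f_h''(G)(\frac{1}{\lambda^{2}}G-\Gamma_3(G))\bigr)+\mathbb E(f_h'(G))\bigl(\frac{2\alpha}{\lambda^{2}}-\mathbb E\Gamma_2(G)\bigr)$ with coefficient $1$, not $\frac{1}{\lambda^{2}}$, in front of each bracket (this is \eqref{PP2:a36}); there is no leftover $\frac{1}{\lambda^{2}}\mathbb E(Gf_h''(G))$. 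With your extra $\frac{1}{\lambda^{2}}$ prefactors, multiplying your penultimate display by $\lambda^{2}/(\lambda^{2}-1)$ yields the constants $\frac{1}{3(\lambda^{2}-1)}$ and $\frac{1}{2(\lambda^{2}-1)}$, which are off by a factor of $\lambda^{2}$ from \eqref{PP2:smt1}. (A small further slip: in the paper's indexing $\Gamma_1(G)=G$, so the single integration by parts reads $\mathbb E(Gg(G))=\mathbb E(g'(G)\Gamma_2(G))$, not $\mathbb E(g'(G)\Gamma_1(G))$.) Once the decomposition is taken in the paper's form, the rest of your argument --- taking the supremum over $h\in\mathcal H_3$ and using $\|f_h'\|\le\frac12$, $\|f_h''\|\le\frac13$ --- goes through.
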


\begin{proof}
	Recall first that $\text{VGD}_{1}(0,\alpha,\lambda,\lambda) \overset{d}{=}\text{TSD}(\alpha,0,\lambda;\alpha,0,\lambda)$.

\noindent
Thus, using \eqref{PP2:e15}, we have
\begin{align}
	\nonumber	\mathbb{E}\left(h(G)-h(X)  \right)&=\mathbb{E}\left(-Gf_{h}(G)+\displaystyle\int_{\mathbb{R}}f_{h}(G+u)\nu(du)  \right)\\
	\nonumber	&=\mathbb{E}\left(-Gf_{h}(G)+ \alpha\displaystyle\int_{0}^{\infty}\left(f_{h}(G+u)-f_{h}(G-u) \right)e^{-\lambda u}du \right)\\
	\nonumber	&=\mathbb{E}\left(-Gf_{h}(G)+\frac{\alpha}{\lambda}\displaystyle\int_{0}^{\infty}\left(f_{h}^{\prime}(G+u)+f_{h}^{\prime}(G-u) \right) e^{-\lambda u}du \right)\\
	\nonumber	&=\mathbb{E}\left(-Gf_{h}(G)+\frac{2\alpha}{\lambda^{2}}f_{h}^{\prime}(G)+ \frac{\alpha}{\lambda^{2}}\displaystyle\int_{0}^{\infty}\left(f_{h}^{\prime\prime}(G+u)-f_{h}^{\prime\prime}(G-u) \right) e^{-\lambda u}du \right)\\
	\nonumber	&=\mathbb{E}\left(-Gf_{h}(G)+\frac{2\alpha}{\lambda^{2}}f_{h}^{\prime}(G)+\frac{1}{\lambda^{2}}Gf_{h}^{\prime\prime}(G)\right)\\
	\nonumber&  + \frac{1}{\lambda^{2}}\mathbb{E}\left(-Gf^{\prime\prime}_{h}(G)+\alpha\displaystyle\int_{0}^{\infty}\left(f_{h}^{\prime\prime}(G+u)-f_{h}^{\prime\prime}(G-u) \right) e^{-\lambda u}du\right)\\
	\nonumber	&=\mathbb{E}\left(-Gf_{h}(G)+\frac{2\alpha}{\lambda^{2}}f_{h}^{\prime}(G)+\frac{1}{\lambda^{2}}Gf_{h}^{\prime\prime}(G)\right)\\&  + \frac{1}{\lambda^{2}}\mathbb{E}\left(-Gf^{\prime\prime}_{h}(G)+\displaystyle\int_{\mathbb{R}}f_{h}^{\prime\prime}(G+u)\nu(du)\right)\label{PP2:a34}
\end{align} 

\noindent
Taking $\sup_{h\in\mathcal{H}_{3}}$ on both side of \eqref{PP2:a34} and rearranging the terms, we have

\begin{align}\label{PP2:a35}
	\left(1-\frac{1}{\lambda^{2}} \right)d_{W_{3}}(G,X) \leq \sup_{h\in\mathcal{H}_{3}}\left|\mathbb{E}\left(-Gf_{h}(G)+\frac{2}{\lambda^{2}}f_{h}^{\prime}(G)+\frac{1}{\lambda^{2}}f_{h}^{\prime\prime}(G)   \right)  \right|.
\end{align}

\noindent
Let $f:\mathbb{R}\to\mathbb{R}$ be a twice differentiable function with bounded first and second derivative. Then, it was shown in the proof of \cite[Theorem 4.1]{key2} that

\begin{align}
\nonumber	&\left|\mathbb{E}  \left( -Gf(G)+\frac{2}{\lambda^{2}}f^{\prime}(G)+\frac{1}{\lambda^{2}}f^{\prime\prime}(G)  \right)\right|\\
\nonumber	&=\left|\mathbb{E}\left( f^{\prime\prime}(G) \left( \frac{1}{\lambda^{2}}-\Gamma_{3}(G)  \right)+f^{\prime}(G)\left( \frac{2\alpha}{\lambda^{2}}-\mathbb{E} \left(\Gamma_{2}(G) \right)  \right)   \right)  \right|\\
	&\leq\|f^{\prime\prime}\|\mathbb{E}\left|\frac{1}{\lambda^{2}}-\Gamma_{3}(G) \right|+\|f^{\prime}\|\mathbb{E}\left|\frac{2\alpha}{\lambda^{2}}-\mathbb{E} \left(\Gamma_{2}(G)\right)  \right|\label{PP2:a36}
\end{align}

\noindent
Using \eqref{PP2:a36} in \eqref{PP2:a35}, and rearranging the terms, we have

\begin{align}
d_{W_{3}}(G,X) 	\leq \frac{\lambda^{2}}{\lambda^{2}-1}\left( \|f_{h}^{\prime\prime}\|\mathbb{E}\left|\frac{1}{\lambda^{2}}-\Gamma_{3}(G) \right|+\|f_{h}^{\prime}\|\mathbb{E}\left|\frac{2\alpha}{\lambda^{2}}-\mathbb{E} \left(\Gamma_{2}(G)\right)  \right|\right). \label{PP2:a37}
\end{align} 

\noindent
Assume that $\|h^{(k)}\|\leq 1$, for $k=2,3$. Then, by using the estimates \eqref{PP2:pr1} in \eqref{PP2:a37}, we get our desired result.
\end{proof}
\begin{rem}
	Eichelsbacher and Th$\ddot{\text{a}}$le \cite[Theorem 4.1]{key2} provide an upper bound in the Wasserstein distance for variance gamma approximation of general functionals of an isonormal Gaussian process in terms of two constants. In \cite{key2}, authors did not mention about these constants explicitly. Recently, Gaunt \cite{kk2,gauntnew} establish an upper bound in the Wasserstein distance for variance gamma approximation of general functionals of an isonormal Gaussian process, and obtain explicit constants in the main result of the article \cite[Theorem 4.1]{key2}. Note that, our bound in the Wasserstein-type distance also include the explicit constants for the symmetric variance gamma approximation.
\end{rem}

\noindent
The following corollary immediately follows for the symmetric variance-gamma approximation of double Wiener-It$\hat{\text{o}}$ integrals, that leads to the six moment theorem.

\begin{cor}
	Let $G_n=I_{2}(f_{n})$ with $f_{n}\in \mathfrak{H}^{\odot 2}$, $n\geq1$ and $X\sim\text{VGD}_{1}(0,\alpha,\lambda,\lambda)$. Then, for $\lambda>1$
	\begin{align}
	\nonumber	d_{W_{3}}(G_{n}, X)& \leq \frac{\lambda^{2}}{3(\lambda^{2}-1)}\left( \frac{1}{120}\kappa_{6}(G_n)-\frac{1}{3\lambda^{2}}\kappa_{4}(G_{n})+\frac{1}{4}\left(\kappa_{3}(G_n)\right)^{2}+\frac{1}{\lambda^{4}}\kappa_{2}(G_{n})  \right)^{\frac{1}{2}}\\
		&+\frac{\lambda^{2}}{2(\lambda^{2}-1)}\left|\frac{2\alpha}{\lambda^{2}}-\kappa_{2}(G_{n})   \right|.\label{PP2:smt2}
	\end{align}	
\end{cor}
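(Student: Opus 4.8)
The plan is to specialise the preceding theorem to $G=G_n=I_2(f_n)$ and then rewrite the two $\Gamma$-functionals on the right-hand side of \eqref{PP2:smt1} as explicit polynomials in the cumulants of $G_n$, using the algebra of the second Wiener chaos. Note first that, by hypercontractivity on a fixed chaos, $G_n\in\mathbb{D}^{\infty}$ and all its moments (hence all $\kappa_j(G_n)$) are finite, so the hypotheses of the preceding theorem are met and no integrability issue arises in what follows.

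I would begin with the second summand in \eqref{PP2:smt1}. By the moment--cumulant identity of the Malliavin--Stein calculus (in the normalisation $\Gamma_1(F)=F$ adopted here), $\kappa_{j}(F)=(j-1)!\,\mathbb{E}[\Gamma_{j}(F)]$ for every $F\in\mathbb{D}^{\infty}$ with $\mathbb{E}F=0$; see \cite{nourdin}. In particular $\mathbb{E}[\Gamma_2(G_n)]=\kappa_2(G_n)$, which turns the term $\frac{\lambda^{2}}{2(\lambda^{2}-1)}\bigl|\frac{2\alpha}{\lambda^{2}}-\mathbb{E}(\Gamma_2(G_n))\bigr|$ into the second summand of \eqref{PP2:smt2}. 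For the first summand I would apply the Cauchy--Schwarz inequality
$$\mathbb{E}\left|\frac{1}{\lambda^{2}}G_n-\Gamma_3(G_n)\right|\le\left(\mathbb{E}\left[\left(\frac{1}{\lambda^{2}}G_n-\Gamma_3(G_n)\right)^{2}\right]\right)^{1/2}$$
and expand the square as $\frac{1}{\lambda^{4}}\mathbb{E}[G_n^{2}]-\frac{2}{\lambda^{2}}\mathbb{E}[G_n\Gamma_3(G_n)]+\mathbb{E}[\Gamma_3(G_n)^{2}]$, in which $\mathbb{E}[G_n^{2}]=\kappa_2(G_n)$.

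The heart of the argument is the computation of $\mathbb{E}[G_n\Gamma_3(G_n)]$ and $\mathbb{E}[\Gamma_3(G_n)^{2}]$. Since $G_n$ belongs to the second chaos, $\Gamma_2(G_n)=\tfrac12\|DG_n\|_{\mathfrak{H}}^{2}$ and $\Gamma_3(G_n)$ both lie in the direct sum of the Wiener chaoses of orders $0$ and $2$; passing to the spectral representation $G_n=\sum_{k}c_k(N_k^{2}-1)$ with $(N_k)$ i.i.d.\ $N(0,1)$ (equivalently, unwinding the contraction operators $f_n\otimes_r f_n$ via the multiplication formula) one finds $\Gamma_3(G_n)=\sum_{k}4c_k^{3}N_k^{2}$, and then, projecting onto the chaos of order zero and comparing with $\kappa_j(G_n)=2^{\,j-1}(j-1)!\sum_k c_k^{\,j}$, the identities $\mathbb{E}[G_n\Gamma_3(G_n)]=\tfrac16\kappa_4(G_n)$ and $\mathbb{E}[\Gamma_3(G_n)^{2}]=\tfrac1{120}\kappa_6(G_n)+\tfrac14\kappa_3(G_n)^{2}$. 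Combining these with $\mathbb{E}[G_n^2]=\kappa_2(G_n)$ yields
$$\mathbb{E}\left[\left(\frac{1}{\lambda^{2}}G_n-\Gamma_3(G_n)\right)^{2}\right]=\frac{1}{120}\kappa_6(G_n)-\frac{1}{3\lambda^{2}}\kappa_4(G_n)+\frac14\kappa_3(G_n)^{2}+\frac{1}{\lambda^{4}}\kappa_2(G_n),$$
and inserting this square-root bound together with $\mathbb{E}[\Gamma_2(G_n)]=\kappa_2(G_n)$ into \eqref{PP2:smt1} gives \eqref{PP2:smt2}.

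I expect the main obstacle to be precisely this last cumulant bookkeeping. It is the restriction to double integrals that keeps $\Gamma_2$ and $\Gamma_3$ simple enough for all the cross-chaos cancellations to be carried out by hand, and it is exactly those cancellations that single out the cumulants $\kappa_2,\kappa_3,\kappa_4,\kappa_6$ --- the fifth cumulant does not survive, since the relevant contraction vanishes --- which accounts for the ``six moment'' terminology. An alternative to the spectral computation is to read off $\mathbb{E}[G_n\Gamma_3(G_n)]$ and $\mathbb{E}[\Gamma_3(G_n)^{2}]$ from the contraction/product formulas already used in \cite{key2} for the non-symmetric variance-gamma target, specialised to $\lambda^{+}=\lambda^{-}=\lambda$.
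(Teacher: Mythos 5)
Your proposal is correct and follows essentially the same route as the paper: specialise the preceding theorem, use $\mathbb{E}[\Gamma_2(G_n)]=\kappa_2(G_n)$, apply Cauchy--Schwarz to the $\Gamma_3$ term, and identify $\mathbb{E}\bigl[(\tfrac{1}{\lambda^2}G_n-\Gamma_3(G_n))^2\bigr]$ with the stated cumulant polynomial. The only difference is that the paper imports the two key identities by citation (to Nourdin--Peccati and to Eichelsbacher--Th\"ale), whereas you rederive them via the spectral representation of second-chaos variables; your computations ($\mathbb{E}[G_n\Gamma_3(G_n)]=\tfrac16\kappa_4(G_n)$ and $\mathbb{E}[\Gamma_3(G_n)^2]=\tfrac1{120}\kappa_6(G_n)+\tfrac14\kappa_3(G_n)^2$) check out.
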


\begin{proof}
	It is shown in \cite[Lemma 4.2 and Theorem 4.3]{nourdin0} that 
	\begin{align}\label{PP2:a38}
		\mathbb{E}\left( \Gamma_{2}(G_{n}) \right)=\kappa_{2}(G_{n}).
	\end{align}
	\noindent
	 It is also justified in \cite[Theorem 4.1]{key2} that 
	\begin{align}\label{PP2:a39}
		\mathbb{E}\left| \frac{1}{\lambda^{2}}G_{n} - \Gamma_{3}(G_{n}) \right| \leq \left(  \mathbb{E} \left(  \frac{1}{\lambda^{2}}G_{n} - \Gamma_{3}(G_{n}) \right)^{2} \right)^{\frac{1}{2}}.
	\end{align}

\noindent
It is also shown in the proof of \cite[Theorem 5.8]{key2} that
\begin{align}\label{PP2:a40}
	\mathbb{E} \left(  \frac{1}{\lambda^{2}}G_{n} - \Gamma_{3}(G_{n}) \right)^{2}= \frac{1}{120}\kappa_{6}(G_n)-\frac{1}{3\lambda^{2}}\kappa_{4}(G_{n})+\frac{1}{4}\left(\kappa_{3}(G_n)\right)^{2}+\frac{1}{\lambda^{4}}\kappa_{2}(G_{n}).
\end{align}

\noindent
Using \eqref{PP2:a40} in \eqref{PP2:a39}, we get
\begin{align}\label{PP2:a41}
\mathbb{E}\left| \frac{1}{\lambda^{2}}G_{n} - \Gamma_{3}(G_{n}) \right| \leq \left( \frac{1}{120}\kappa_{6}(G_n)-\frac{1}{3\lambda^{2}}\kappa_{4}(G_{n})+\frac{1}{4}\left(\kappa_{3}(G_n)\right)^{2}+\frac{1}{\lambda^{4}}\kappa_{2}(G_{n})  \right)^{\frac{1}{2}}.
\end{align}

\noindent
Using \eqref{PP2:a41} and \eqref{PP2:a38} in the RHS of \eqref{PP2:smt1}, we get \eqref{PP2:smt2}, as desired.
\end{proof}

\begin{rem}
Eichelsbacher and Th$\ddot{\text{a}}$le \cite[Corollary 5.10]{key2} provide an upper bound in the Wasserstein distance for variance gamma approximation of double Wiener-It$\hat{\text{o}}$ integrals in terms of two constants, and first six cumulants. In \cite{key2}, authors did not mention about these constants explicitly. Recently, Gaunt \cite{kk2,gauntnew} establish an upper bound in the Wasserstein distance for variance gamma approximation of double Wiener-It$\hat{\text{o}}$ integrals, and obtain explicit constants in the main result of the article \cite[Corollary 5.10]{key2}. Note that, our bound in the Wasserstein-type distance also include the explicit constants, and the first six cumulants for the symmetric variance gamma approximation.	
\end{rem}



\appendix
\setcounter{equation}{0}
\numberwithin{equation}{section}

\section{}\label{app}

\noindent
In this section, we derive some properties of TSD, and prove some results that are used in the previous sections. We also prove the results that we stated in the main text without proofs.

\subsection{Self-decomposability}
Here, we discuss self-decomposable property of TSD that are used to estimate the properties of the solution to Stein equation. 
\begin{lem}\label{p3}
	Let $X\sim \text{TSD}(\alpha^{+},\beta^{+},\lambda^{+};\alpha^{-},\beta^{-},\lambda^{-})$. Then, $X$ has the self-decomposable property.
\end{lem}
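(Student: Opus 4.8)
The plan is to verify self-decomposability directly from the L\'evy--Khinchine representation of $X$, using the standard criterion that an infinitely divisible law on $\mathbb{R}$ is self-decomposable if and only if its L\'evy measure $\nu(du)$ admits a density of the form $k(u)/|u|$ with $k$ nonnegative, nondecreasing on $(-\infty,0)$ and nonincreasing on $(0,\infty)$ (see Sato \cite{sato}). So first I would recall from \eqref{e2} that for $u>0$ the density of $\nu$ is $\alpha^{+}u^{-1-\beta^{+}}e^{-\lambda^{+}u}$, which I rewrite as $k^{+}(u)/u$ with $k^{+}(u)=\alpha^{+}u^{-\beta^{+}}e^{-\lambda^{+}u}$; similarly for $u<0$ the density is $\alpha^{-}|u|^{-1-\beta^{-}}e^{-\lambda^{-}|u|}=k^{-}(u)/|u|$ with $k^{-}(u)=\alpha^{-}|u|^{-\beta^{-}}e^{-\lambda^{-}|u|}$.

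The second step is the monotonicity check. For $u>0$, since $\beta^{+}\in[0,1)$ the factor $u^{-\beta^{+}}$ is nonincreasing (constant when $\beta^{+}=0$) and $e^{-\lambda^{+}u}$ is strictly decreasing, so their product $k^{+}$ is nonincreasing on $(0,\infty)$; one can see this cleanly by noting $\tfrac{d}{du}\log k^{+}(u)=-\beta^{+}/u-\lambda^{+}<0$. By the symmetric argument, writing $v=|u|=-u$ for $u<0$, the function $v\mapsto k^{-}$ is nonincreasing in $v$, hence $k^{-}$ is nondecreasing in $u$ on $(-\infty,0)$. Nonnegativity is immediate since all parameters are positive. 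This establishes the required shape of $k$.

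The last step is to invoke the cited characterization of self-decomposable distributions to conclude that $X$ is self-decomposable; equivalently, one notes that the decomposition $\phi(z)=\phi(e^{-t}z)\,\phi_t(z)$ with $\phi_t$ a genuine characteristic function, which is exactly \eqref{PP2:a15}, is the defining property, and the monotone-density criterion guarantees each $\phi_t$ is infinitely divisible (hence a valid cf). I do not expect any serious obstacle here: the only point requiring mild care is handling the boundary case $\beta^{+}=0$ (resp. $\beta^{-}=0$), where $k^{+}$ reduces to $\alpha^{+}e^{-\lambda^{+}u}$ and monotonicity is still clear, and making sure the monotonicity conventions (nonincreasing on the positive half-line, nondecreasing on the negative half-line) are stated in the form demanded by the reference.
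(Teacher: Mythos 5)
Your proof is correct and follows essentially the same route as the paper: both verify Sato's monotone-density criterion (Corollary 15.11 of \cite{sato}) for the L\'evy measure \eqref{e2}, the only cosmetic difference being that you write the density as $k(u)/|u|$ with $k\geq 0$ increasing on $(-\infty,0)$ and decreasing on $(0,\infty)$, whereas the paper writes it as $k(u)/u$ with $k$ of opposite signs on the two half-lines and decreasing on each. The monotonicity computation $\frac{d}{du}\log k^{+}(u)=-\beta^{+}/u-\lambda^{+}<0$ is exactly what is needed, including in the boundary case $\beta^{\pm}=0$.
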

\begin{proof}
	In view of L\'evy measure (\ref{e2}), we can express the cf of TSD (\ref{e1}) as
	
	\begin{equation}
		\phi(t)=\exp\left\{\int_{\mathbb R}(e^{itu}-1)\frac{k(u)}{u}  du \right \},~~t\in\mathbb{R},
	\end{equation}
	
	\noindent
	where $k:\mathbb{R}\to\mathbb{R}$ denotes the function
	
	\begin{equation}
		k(u)=\frac{\alpha^{+}  }{u^{\beta^{+}}}e^{-\lambda^{+}u}\mathbf{1}_{(0,\infty)}(u)-\frac{\alpha^{-}  }{|u|^{\beta^{-}}}e^{-\lambda^{-}|u|}\mathbf{1}_{(-\infty,0)}(u), ~~u\in\mathbb{R}.
	\end{equation}
	
	\noindent
	Note that $k\geq0$ on $(0,\infty)$ and $k\leq0$ on $(-\infty,0)$. Again, $k$ is strictly decreasing on $(-\infty,0)$ and $(0,\infty)$. It is an immediate consequence of (Sato \cite[Corollary 15.11]{sato}) that TSD are self-decomposable.
	
\end{proof}

\subsection{Frullani improper integral}
\noindent
Here we discuss a special type of improper integral, so called Frullani integral that is used to prove similarity of various cf representations of VGD. We prove a lemma on this integral

\begin{lem}
	Let a fuction $g:(0,\infty)\to \mathbb{R}$ is differentiable on $(0,\infty)$. Let $\lim_{x\to 0^{+}}g(x)$ and $\lim_{x\to \infty}g(x)$ exist finitely, and the limiting values are $g_0$ and $g_\infty$ respectively.Then
	
	\begin{equation*}
	\int_{0}^{\infty}\frac{g(ax)-g(bx)}{x}dx=(g_{0}-g_{\infty})\log\left(\frac{b}{a} \right),
	\end{equation*}
	\noindent
	where $b,a>0$.
\end{lem}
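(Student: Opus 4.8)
The plan is to reduce the statement to an elementary limiting argument after representing the integrand as a double integral. First I would fix a compact interval $[\delta, R] \subset (0,\infty)$ and write, for each such interval,
\begin{equation*}
\int_{\delta}^{R}\frac{g(ax)-g(bx)}{x}\,dx = \int_{\delta}^{R}\frac{1}{x}\left(\int_{bx}^{ax} g'(t)\,dt\right)dx,
\end{equation*}
which is valid by the fundamental theorem of calculus since $g$ is differentiable on $(0,\infty)$. The next step is to swap the order of integration via Fubini's theorem (justified on the compact region, where $g'$ is integrable against the product measure), after carefully identifying the region $\{(x,t): \delta \le x \le R,\ t \text{ between } bx \text{ and } ax\}$. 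Substituting $t = xs$ with $s$ ranging over $[\min(a,b), \max(a,b)]$, or equivalently changing variables so the inner integral becomes $\int_{b}^{a}\frac{g'(xs)x}{xs}\,ds$ after a rescaling, one finds
\begin{equation*}
\int_{\delta}^{R}\frac{g(ax)-g(bx)}{x}\,dx = \int_{b}^{a}\frac{g(Rs)-g(\delta s)}{s}\,ds.
\end{equation*}

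With this identity in hand, the remaining work is to take the limits $\delta \to 0^+$ and $R \to \infty$. On the right-hand side, since $s$ ranges over a fixed compact subinterval of $(0,\infty)$, we have $g(Rs) \to g_\infty$ and $g(\delta s) \to g_0$ uniformly in $s$ on that interval (uniformity follows because $Rs \to \infty$ and $\delta s \to 0^+$ uniformly for $s$ in a compact set bounded away from $0$ and $\infty$). Hence we may pass the limit inside the integral to obtain
\begin{equation*}
\int_{0}^{\infty}\frac{g(ax)-g(bx)}{x}\,dx = \int_{b}^{a}\frac{g_\infty - g_0}{s}\,ds = (g_\infty - g_0)\log\left(\frac{a}{b}\right) = (g_0 - g_\infty)\log\left(\frac{b}{a}\right),
\end{equation*}
which is the claimed formula. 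This also shows the improper integral on the left converges.

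The main obstacle I anticipate is the rigorous justification of the Fubini interchange and, more delicately, verifying that the left-hand improper integral genuinely converges (so that the statement $\int_0^\infty \cdots = \cdots$ is meaningful and not merely a principal-value statement). The interchange is harmless on any compact $[\delta,R]$, but one must argue that the limits $\delta \to 0^+$ and $R \to \infty$ can be taken independently and that no contribution is lost near the endpoints; the boundedness of $g$ near $0$ and $\infty$ (which follows from the finite limits $g_0, g_\infty$) together with the explicit reduced form $\int_b^a \frac{g(Rs)-g(\delta s)}{s}\,ds$ handles this cleanly, since that reduced integral is over a fixed compact set and the integrand is uniformly bounded. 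An alternative route avoiding Fubini is to differentiate under the integral sign in a parameter, but the double-integral approach is the most transparent and I would present that one.
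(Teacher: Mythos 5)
Your proof is correct and rests on the same core identity as the paper's --- representing $g(ax)-g(bx)$ as an integral of $g'$ over the parameter interval between $b$ and $a$ and then interchanging the order of integration --- but your execution is more careful precisely where the paper is weakest. The paper applies Fubini directly on the unbounded region $(0,\infty)\times[b,a]$, which would require $\int_0^\infty |g'(u)|\,du<\infty$; that does not follow from the hypotheses (only continuity/differentiability and the existence of the finite limits $g_0,g_\infty$ are assumed, and $g'$ need not be absolutely integrable under these assumptions). Your truncation to $[\delta,R]$, where the interchange is harmless, followed by the reduction to $\int_b^a\bigl(g(Rs)-g(\delta s)\bigr)s^{-1}\,ds$ and a uniform-convergence passage to the limit, closes that gap and in addition establishes that the improper integral actually converges rather than merely evaluating it formally. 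The only blemish is the intermediate expression $\int_b^a g'(xs)\,x/(xs)\,ds$: after the substitution $t=xs$ the quantity $x^{-1}\int_{bx}^{ax}g'(t)\,dt$ becomes $\int_b^a g'(xs)\,ds$ (the factor $x$ from $dt=x\,ds$ cancels the prefactor $1/x$, and no extra $1/s$ appears); the identity you then state for the truncated integral is nevertheless the correct one, as a direct substitution $u=ax$ and $u=bx$ in the two halves confirms, so this is a typo rather than a gap.
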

\begin{proof}
We prove this lemma by extending the integrand, and using Fubini's theorem. Note that
\begin{align*}
	\displaystyle\int_{0}^{\infty}\frac{g(ax)-g(bx)}{x}dx&=\displaystyle\int_{0}^{\infty}\int_{b}^{a}\frac{g^{\prime}(xt)}{x}dtdx\\
	&=\int_{b}^{a}\int_{0}^{\infty}\frac{g^{\prime}(xt)}{x}dtdx\\
	&=\int_{b}^{a}(g_\infty-g_0)\frac{1}{x}dx\\
	&=(g_0-g_\infty)\log\left( \frac{b}{a} \right).
\end{align*}
\noindent
Hence the lemma is proved.

\space

\noindent
As for example, consider the integral $I=\displaystyle\int_{0}^{\infty}\frac{e^{-\lambda_{1}x}-e^{-\lambda_{2}x}}{x}dx,$ where $\lambda_{1},\lambda_{2}>0$. Then by the above lemma one can easily obtain $I=\log\left(\frac{\lambda_{2}}{\lambda_{1}}\right)$.
	
\end{proof}
\noindent
Next, we prove a technical result used in the previous section.

\begin{pro}\label{PP2:appendixPro1}
	Let $x,z \in \mathbb{R}$. Then, for all $t\geq 0,$
	\begin{equation}\label{PP2:e48}
		\lim_{t\to0^{+}}\frac{1}{t}\left(e^{iz x(e^{-t}-1)}\phi_{t}(z)-1\right)=\left(-x+ \displaystyle\int_{\mathbb{R}}ue^{iz u}\nu(du) \right )(iz).
	\end{equation}
\end{pro}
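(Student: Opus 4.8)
The plan is to evaluate the limit directly by splitting it into two pieces corresponding to the two factors in the product $e^{izx(e^{-t}-1)}\phi_t(z)$. First I would write
\begin{align*}
\frac{1}{t}\left(e^{izx(e^{-t}-1)}\phi_t(z)-1\right)
&=\frac{e^{izx(e^{-t}-1)}-1}{t}\,\phi_t(z)+\frac{\phi_t(z)-1}{t},
\end{align*}
which is legitimate since $\phi_t(z)$ is bounded (it is a characteristic function). The first term is elementary: as $t\to0^{+}$ we have $e^{-t}-1=-t+o(t)$, so $izx(e^{-t}-1)=-izxt+o(t)$ and hence $\big(e^{izx(e^{-t}-1)}-1\big)/t\to -izx$, while $\phi_t(z)=\phi(z)/\phi(e^{-t}z)\to 1$ by continuity of $\phi$ and $\phi(0)=1$ (note $\phi(z)\neq0$ everywhere from the L\'evy--Khinchine form \eqref{e1}). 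So the first term contributes $-izx$.

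Next I would handle $\big(\phi_t(z)-1\big)/t=\big(\phi(z)-\phi(e^{-t}z)\big)/\big(t\,\phi(e^{-t}z)\big)$. Since $\phi(e^{-t}z)\to1$, it suffices to compute $\lim_{t\to0^+}\big(\phi(z)-\phi(e^{-t}z)\big)/t$. Writing $\phi(z)=\exp\!\big(\psi(z)\big)$ with $\psi(z)=\int_{\mathbb R}(e^{izu}-1)\nu(du)$, the chain rule gives $\frac{d}{dt}\phi(e^{-t}z)=\phi(e^{-t}z)\,\psi'(e^{-t}z)\cdot(-e^{-t}z)$, and from \eqref{PP2:e4} one has $\psi'(w)=i\int_{\mathbb R}ue^{iwu}\nu(du)$. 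Evaluating the derivative of $g(t):=\phi(e^{-t}z)$ at $t=0$ yields $g'(0)=-z\phi(z)\psi'(z)=-iz\phi(z)\int_{\mathbb R}ue^{izu}\nu(du)$, so $\lim_{t\to0^+}\big(\phi(z)-\phi(e^{-t}z)\big)/t=-g'(0)=iz\phi(z)\int_{\mathbb R}ue^{izu}\nu(du)$. Dividing by the limit $\phi(e^{-t}z)\to1$ (actually one should divide before taking the limit, but since both limits exist and the denominator tends to a nonzero value this is harmless), the second term contributes $iz\,\phi(z)/\phi(z)\cdot\int_{\mathbb R}ue^{izu}\nu(du)$; more carefully, $\big(\phi_t(z)-1\big)/t\to iz\int_{\mathbb R}ue^{izu}\nu(du)$ once one uses $\phi_t(z)-1=\big(\phi(z)-\phi(e^{-t}z)\big)/\phi(e^{-t}z)$ and the factor $1/\phi(e^{-t}z)\to1$. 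Adding the two contributions gives exactly $\big(-x+\int_{\mathbb R}ue^{izu}\nu(du)\big)(iz)$, as claimed.

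The main obstacle I anticipate is making the differentiation of $t\mapsto\phi(e^{-t}z)$ rigorous: one must justify differentiating under the integral sign in $\psi(w)=\int_{\mathbb R}(e^{iwu}-1)\nu(du)$, which requires the integrability of $\int_{\mathbb R}|u|\,\nu(du)$ near $t=0$ — this is where the tempering (the exponential factors $e^{-\lambda^{\pm}|u|}$) and the restriction $\beta^{\pm}\in[0,1)$ in \eqref{e2} are essential, since they guarantee $\int_{\mathbb R}|u|\,\nu(du)<\infty$ (indeed all moments are finite for TSD). With that finiteness in hand, dominated convergence justifies both the differentiation under the integral and the interchange of limit and integral, and the rest is the routine Taylor expansion bookkeeping sketched above. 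An alternative, essentially equivalent route is to expand $\phi(e^{-t}z)$ to first order directly using $\phi(e^{-t}z)=1+\psi(e^{-t}z)+o(\psi(e^{-t}z))$ and $\psi(e^{-t}z)=\psi(z)+t\cdot z\psi'(z)+o(t)$, but the generator-style computation via $g'(0)$ is cleaner to present.
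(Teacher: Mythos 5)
Your proof is correct and follows essentially the same route as the paper's: both arguments evaluate the limit as the $t$-derivative at $0^{+}$ of $e^{izx(e^{-t}-1)}\phi_{t}(z)$, with $\int_{\mathbb{R}}|u|\,\nu(du)<\infty$ (guaranteed by the tempering and $\beta^{\pm}<1$) and dominated convergence justifying differentiation under the integral; the paper merely organizes the computation by writing the whole expression as $\exp(A+iB)$, splitting via Euler's formula and applying L'Hospital, whereas you use a product-rule decomposition, which is arguably cleaner. One slip in your prose: as $t\to0^{+}$ one has $\phi(e^{-t}z)\to\phi(z)$, not $1$ (the limit $1$ corresponds to $t\to\infty$); your displayed computation nevertheless divides by $\phi(z)$ and correctly yields $iz\int_{\mathbb{R}}ue^{izu}\nu(du)$ for the second term, so the argument stands, but the justification should be stated accordingly.
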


\begin{proof}
	Recall from Section \ref{mr}, if $X$ be a tempered stable random variable, we write
	\begin{align*}
		\phi_{t}(z)=\frac{\phi(z)}{\phi(e^{-t}z)}=\exp\left(\displaystyle\int_{\mathbb{R}}(e^{iz u}-e^{iue^{-t}z})\nu(du)\right),~~t\geq 0\text{ (see \eqref{PP2:a15})}.
	\end{align*}

	\noindent
	Now, let us consider LHS of \eqref{PP2:e48},
	\begin{align}\label{PP2:smy1}
		\nonumber &	\lim_{t\to0^{+}}\frac{1}{t}\left(e^{iz x(e^{-t}-1)}\phi_{t}(z)-1\right)\\
		\nonumber	=&\lim_{t\to0^{+}}\frac{1}{t}\left(\exp\left(izx(e^{-t}-1)+ \displaystyle\int_{\mathbb{R}}(e^{iz u}-e^{iue^{-t}z })\nu(du) \right) -1     \right)\\
		=&\lim_{t\to0^{+}}\frac{1}{t}\left(\exp \left(A+iB\right)-1    \right),
	\end{align}
	\noindent
	where 
	\begin{align*}
		A&=\int_{\mathbb{R}}(\cos(zu) -\cos(zue^{-t}))\nu_\alpha(du) \text{ and}\\
		B&=\left(zx(e^{-t}-1)+\int_{\mathbb{R}}(\sin(zu)- \sin(zue^{-t}) ) \nu(du)  \right).
	\end{align*}

	\noindent
	Applying Euler's formula for complex exponential to \eqref{PP2:smy1}, and rearranging the limits, we have
	\begin{align}\label{PP2:smy2}
		\lim_{t\to0^{+}}\frac{1}{t}\left(e^{iz x(e^{-t}-1)}\phi_{t}(z)-1\right)&=\lim_{t\to0^{+}}\frac{e^{A}\cos(B)-1}{t}+i\lim_{t\to0^{+}}\frac{e^{A}\sin(B)}{t}.
	\end{align}
	
	\noindent
	It is easy to show that at $t=0$, $e^{A}\cos(B)-1=0$ and $e^{A}\sin(B)=0.$ Thus, on applying L'Hospital rule on \eqref{PP2:smy2}, taking limit as $t$ tend to $0^{+}$, and using dominated convergence theorem, we have
	
	\begin{align}
		\nonumber \lim_{t\to0^{+}}\frac{1}{t}\left(e^{iz x(e^{-t}-1)}\phi_{t}(z)-1\right)&=\left(\int_{\mathbb{R}}iu\sin(zu)\nu(du) -x+\int_{\mathbb{R}}u\cos(zu)\nu(du)  \right)(iz)\\
		\nonumber &=\left(-x+\int_{\mathbb{R}}u(\cos(zu)+i\sin(zu) )\nu(du)   \right)(iz)\\
		\nonumber &=\left(-x+\int_{\mathbb{R}}ue^{izu}\nu(du)  \right)(iz)
	\end{align}
	
	\noindent
	This completes the proof.
\end{proof}

\subsection{Further proofs} 

\subsubsection{Proof of the converse of Theorem \ref{th1}} For any $s \in \mathbb{R}$, let $f(x)=e^{isx}$, $x\in \mathbb{R}$, then (\ref{PP2:StenIdTSD}) becomes
\begin{align*}
\mathbb{E}Xe^{isX} &= \mathbb{E}\int_{\mathbb R}e^{is(X+u)}u\nu(du)\\
&=\mathbb{E}e^{isX}\int_{\mathbb R}e^{isu}u\nu(du).
\end{align*}

\noindent
Setting $\phi(s)=\mathbb{E}e^{isX}$, Then
\begin{equation}\label{suff1}
\phi^{\prime}(s)=i\phi(s)\int_{\mathbb R}e^{isu}u\nu(du).
\end{equation}
\noindent
Integrating out the real and imaginary parts of (\ref{suff1}) leads, for any $t\geq 0$, to

\begin{align*}
\phi(t)&=\exp \left(i\int_{0}^{t}\int_{\mathbb R}e^{isu}u\nu(du)ds  \right)\\
&=\exp\left(i\int_{\mathbb R}\int_{0}^{t}e^{isu}dsu\nu(du)  \right)\\
&=\exp\left(\int_{\mathbb R}(e^{itu}-1)\nu(du)  \right).
\end{align*}

\noindent
A similar computation can be done for $t\leq0.$ Hence the converse part of the theorem is proved.
\subsubsection{Proof of Proposition \ref{PP2:proSem}}  For each $f\in\mathcal{F}_X$, it is easy to show that $P_{0}f(x)=f(x) ~~\text{and}~~ \lim_{t\to\infty}P_{t}(f)(x)=\displaystyle\int_{\mathbb R}f(x)F_{X}(dx)$. 
\noindent
Now, for any $s,t\geq0$, we have
\begin{align}\label{PP2:a22}
	\phi_{t+s}(z)=\frac{\phi(z)}{\phi(e^{-(t+s)}z)}
	=\frac{\phi(z)}{\phi(e^{-s}z)}\frac{\phi(e^{-s}z)}{\phi(e^{-(t+s)}z)}
	=\phi_{s}(z)\phi_{t}(e^{-s}z)
\end{align}
\noindent
Using \eqref{PP2:a22}, we have
\begin{align}
	\nonumber	LHS=P_{t+s}(f)(x)&=\frac{1}{2\pi}\int_{\mathbb R}\widehat{f}(z)e^{iz xe^{-(t+s)}}\phi_{t+s}(z)dz\\
	\label{PP2:a022}	&=\frac{1}{2\pi}\int_{\mathbb R}\widehat{f}(z)e^{iz xe^{-(t+s)}}\phi_{s}(z)\phi_{t}(e^{-s}z) dz.
\end{align}
\noindent
We need to show that $P_{t+s}(f)(x)=P_{t}(P_{s}f)(x)$ for all $f\in \mathcal{F}_X$. Let $\delta$ be the Dirac-$\delta$ measure.

\begin{align*}
	\text{RHS}&=P_{t}(P_{s}(f))(x)\\
	&=\frac{1}{2\pi}\int_{\mathbb R}\widehat{P_{s}(f)}(z)e^{iz xe^{-t}}\phi_{t}(z)dz\\
	&=\frac{1}{2\pi}\int_{\mathbb R}\left(\int_{\mathbb{R}}e^{-ivz}P_{s}(f)(v)dv  \right)e^{iz xe^{-t}}\phi_{t}(z)dz\\
	&=\frac{1}{(2\pi)^{2}}\int_{\mathbb R}\left(\int_{\mathbb{R}}e^{-ivz}\left(\int_{\mathbb{R}}\widehat{f}(w)e^{iwe^{-s}v}\phi_{s}(w)dw  \right)dv  \right)e^{iz xe^{-t}}\phi_{t}(z)dz\\
	&=\frac{1}{(2\pi)^{2}}\int_{\mathbb R}\widehat{f}(w)\phi_{s}(w)\int_{\mathbb R}e^{iz xe^{-t}}\phi_{t}(z) \left(\int_{\mathbb R}e^{iv(e^{-s}w-z)}dv  \right)dz dw\\
	&=\frac{1}{(2\pi)^{2}}\int_{\mathbb R}\widehat{f}(w)\phi_{s}(w)\int_{\mathbb R}e^{iz xe^{-t}}\phi_{t}(z) 2\pi \delta (e^{-s}w -z) dz dw\\
	&=\frac{1}{2\pi}\int_{\mathbb R}\widehat{f}(w)\phi_{s}(w)e^{i e^{-s}wxe^{-t}}\phi_{t}(e^{-s}w)dw\\
	&=\frac{1}{2\pi}\int_{\mathbb R}\widehat{f}(z)e^{iz xe^{-(t+s)}}\phi_{s}(z)\phi_{t}(e^{-s}z) dz\\
	&=P_{t+s}(f)(x)=\text{LHS}~~(\text{from }\eqref{PP2:a022}),
\end{align*}   
\noindent
and the desired conclusion follows.

\subsubsection{Remaining proof of Theorem \ref{thmsol}} Let us consider a function $g_{h}: \mathbb{R}\to \mathbb{R}$ defined as
\begin{equation*}
	g_{h}(x)=-\displaystyle\int_{0}^{\infty}\left(P_{t}(h)(x)-\mathbb{E}h(X)  \right)dt,~~h\in \mathcal{H}_{r},
\end{equation*}
\noindent
where $(P_{t})_{t\geq0}$ is the semigroup defined in \eqref{PP2:e16}. 

%

\noindent
Using \eqref{PP2:a17}, we have
\begin{align}
	\nonumber |P_{t}(h)(x)-\mathbb{E}h(X)|&=\left|\int_{\mathbb R}h(y+e^{-t}x)F_{X_{(t)}}(dy)-\int_{\mathbb R}h(y)F_{X}(dy) \right|\\
	\nonumber&=\left|\int_{\mathbb R}(h(y+e^{-t}x)-h(y))F_{X_{(t)}}(dy)\right.\\
	\nonumber&\left.+\int_{\mathbb{R}}h(y)F_{X_{(t)}}(dy) -\int_{\mathbb R}h(y)F_{X}(dy)  \right|\\
	\nonumber &\leq e^{-t}|x| |h^{(1)}|+\left|\int_{\mathbb{R}}\widehat{h}(z)\left(\phi_{t}(z)-\phi(z)  \right)dz\right|\\
	&\leq e^{-t}|x| |h^{(1)}|+\int_{\mathbb{R}}|\widehat{h}(z)||\phi_{t}(z)-\phi(z) |dz  \label{PP2:ell00}
\end{align}
\noindent
Now, let us calculate an upper bound between the difference of two characteristic functions $\phi_{t}$ and $\phi$. For all $t>0$ and $z\in \mathbb{R},$
\begin{align*}
	|\phi_{t}(z)-\phi(z)|=\left|\frac{\phi(z)}{\phi(e^{-t}z)}-\phi(z)\right|\leq \left|\phi_{\alpha}(e^{-t}z)-1   \right|=|e^{\omega_{t}(z)}-1|,
\end{align*}
\noindent
where $\omega_{t}(z)=\int_{\mathbb{R}}(e^{ize^{-t}u}-1)\nu(du)$. Note that the function $z\to e^{s\omega_{t}(z)}$ is a characteristic function for all $s\in (0,\infty)$. Thus, for all $z\in \mathbb{R}$ and $t>0$,
\begin{align}
	\nonumber |\phi_{t}(z)-\phi(z)|&\leq \left| \int_{0}^{1}\frac{d}{ds}(\exp (s\omega_{t}(z))) ds  \right|\\
	\nonumber &\leq |\omega_{t}(z)|\\
	\nonumber &\leq \max\{e^{-t\beta^{+}},e^{-t\beta^{-}}\}\left|\int_{\mathbb{R}}(e^{izu}-1)\tilde{\nu}(du)  \right|\\
	& \leq \max\{e^{-t\beta^{+}},e^{-t\beta^{-}}\}C\left(1+|z|^{2}\right),~~C>0, \label{PP2:ell01}
\end{align}
\noindent
where $
\tilde{\nu}(du)=\left(\frac{\alpha^{+}  }{u^{1+\beta^{+}}}\mathbf{1}_{(0,\infty)}(u)+\frac{\alpha^{-}  }{|u|^{1+\beta^{-}}}\mathbf{1}_{(-\infty,0)}(u)\right)du$, and the last inequality is followed by \cite[p.30, Ex. 1.2.16]{newref1}. Using \eqref{PP2:ell01} in \eqref{PP2:ell00}, one can easily show that $\displaystyle\int_{0}^{\infty}|P_t(h)(x)-\mathbb{E}h(X)|dt<\infty$. Hence, $g_{h}(x)$ is well-defined.  \noindent
By dominated convergence theorem, we see that $g_{h}$ is differentiable and
\begin{align}
	\nonumber g_{h}^{\prime}(x)&=-\lim_{\zeta\to\infty} \frac{d}{dx}\displaystyle\int_{0}^{\zeta}(P_t(h)(x)-\mathbb{E}h(X))dt\\
	\nonumber&=-\lim_{\zeta\to\infty}\displaystyle\int_{0}^{\zeta}\frac{d}{dx}\left(\int_{\mathbb{R}}h(xe^{-t}+u)F_{X_{(t)}}(du)  \right)dt\\
	\nonumber&=-\displaystyle\int_{0}^{\infty}e^{-t}\int_{\mathbb{R}}h^{\prime}(u+xe^{-t})F_{X_{(t)}}(du)dt=f_{h}(x),
\end{align}

\noindent
the desired conclusion follows.

\noindent 
\textbf{Acknowledgment}: The first author is thankful for the financial support
of HTRA fellowship at IIT Madras. The second author is thankful for the funding by IIT Madras from the IoE project: SB20210848MAMHRD008558.


\begin{thebibliography}{}\small

	\bibitem{newref1} Applebaum, D. (2009). L\'evy processes and stochastic calculus, Second edition. Cambridge Studies in Advanced Mathematics, $\bf{116}$. Cambridge University Press, Cambridge, xxx+460 pp.

\bibitem{frullani} Arias-De-Reyna, J. (1990). On the Theorem of Frullani. Proceedings of the American Mathematical Society. Volume $\bf{109}$. Number I, pp. 165-175.


\bibitem{k0} Arras, B. and Houdr$\acute{e}$, C.(2019). On Stein's method for infinitely divisible laws with finite first moment. Springer Briefs in Probability and Mathematical Statistics, Springer.


\bibitem{k22}Arras, B. and Houdr\'e, C. (2019). On Stein’s method for multivariate self-decomposable laws. Preprint
arXiv:1907.10050.


\bibitem{k8} Barbour, A. D. (1990). Stein's method for diffusion approximations. Probability Theory and Related Fields $\mathbf{84}$, pp. 297-322.


\bibitem{k14} Boyarchenko, S.I. and Levendorskii,  S.Z. (2000). Option pricing for truncated L\'evy processes, International Journal of Theoretical and Applied Finance $\mathbf{3(3)}$, pp. 549-552.


\bibitem{k15}Carr, P., Geman, H., Madan,  D.B., Yor, M. (2002). The fine structure of asset returns: an empirical investigation. Journal of Business $\mathbf{75 (2)}$ pp. 305-332.



\bibitem{k3}Chen, L. H. Y.(1975). Poisson approximation for dependent trials. Annals of Probability $\mathbf{3}$, pp. 534-545.

\bibitem{chen}Chen, L. H. Y., Goldstein, L. and Shao, Q-M. (2011). Normal Approximation by Stein's Method. Springer. 

\bibitem{k16}Chen, P., Nourdin, I., Xu, L., Yang, X., Zhang, R. (2019). Non-integrable stable approximation by Stein’s method. Preprint http://arxiv.org/abs/1903.12315v2


\bibitem{k17}Chen, P., Nourdin, I. and Xu, L. (2018). Stein’s method for asymmetric $\alpha-$stable distributions, with applications to CLT. Preprint https://arxiv.org/pdf/1808.02405.


\bibitem{kk6} D$\ddot{o}$bler, C., Gaunt, R.E. and Vollmer, S. J. (2017). An iterative technique for bounding derivatives of solutions of Stein equations. Electron. J. Probab. 22, no. 96, pp. 1-39


\bibitem{k18}Eichelsbacher, P. and Reinert, G. (2008). Stein’s method for discrete Gibbs measures. The Annals of Applied Probability, 18, pp. 1588-1618.

\bibitem{key2}Eichelsbacher, P. and Th$\ddot{a}$le, C. (2015). Malliavin-Stein method for variance-gamma approximation on Wiener space. Electron. J. Probab. 20, no. $\mathbf{123}$ , pp. 1-28.



\bibitem{kk1}Finlay, R. and Seneta, E. (2008). Option pricing with VG-like models. International Journal of Theoretical and Applied Finance
Vol. 11, No. $\mathbf{8}$, pp. 943-955


\bibitem{newref2} Fofack, H., Nolan, J.P. (1999). Tail Behavior, Modes and other Characteristics of Stable Distributions. Extremes 2, 39–58 . https://doi.org/10.1023/A:1009908026279 


\bibitem{k4}Fulman, J. and Ross, N. (2013). Exponential approximation and Stein's method of exchangeable pairs. ALEA, Latin American Journal of Probability and Mathematical Statistics $\mathbf{10(1)}$, pp. 1-13.



\bibitem{N2}Gaunt, R. E. (2013). Rates of convergence of variance-gamma approximations via Stein's method. Dphil thesis, The Queen's College
University of Oxford.

\bibitem{k24}Gaunt, R. E. (2014). Variance-Gamma approximation via Stein’s method. Electronic Journal of Probability 19 no. 38, pp. 1-33.

\bibitem{kk2}Gaunt, R.E. (2020). Wasserstein and Kolmogorov error bounds for
variance-gamma approximation via Stein’s method $\textbf{I}$. Journal of Theoretical Probability $\mathbf{33}$, pp. 465-505

\bibitem{gauntnew}Gaunt, R.E. (2020). Stein factors for variance-gamma approximation in the Wasserstein and Kolmogorov distances. preprint: https://arxiv.org/pdf/2008.06088


\bibitem{k6}Gaunt, R.E., Pickett, A.M., Reinert, G. (2017). Chi-square approximation by Stein's method with application to Pearson's statistic. Annals of Applied Probability $\mathbf{27 (2)}$, 720-756

\bibitem{kk3}Gaunt, R.E. (2017). On Stein's method for products of normal random variables and zero bias couplings. Bernoulli $\mathbf{23(4B)}$, pp. 3311-3345 

\bibitem{key1}Gaunt, R.E., Mijoule, G. and Swan, Y. (2019). Some new Stein operators for product distributions. Preprint https://arxiv.org/abs/1901.11460v2

\bibitem{gaunt7} Gaunt, R.E. (2021). New error bounds for Laplace approximation via Stein's method. ESAIM: Probability and Statistics, \textbf{25}, pp. 325-345.

\bibitem{kk5}Grabchak, M. (2015). Tempered stable distributions stochastic models for multiscale processes. Springer Briefs in Mathematics, Springer.


\bibitem{koponen} Koponen, I. (1995). Analytic approach to the problem of convergence of truncated L\'evy flights towards the Gaussian stochastic process. Physical Review E $\mathbf{52}$. pp. 1197-1199

\bibitem{k13} K$\ddot{u}$chler, U. and Tappe, S (2013). Tempered stable distributions and processes. Stochastic Stochastic Processes and their Applications $\mathbf{123}$, 4256-4293



\bibitem{k19}Kumar, A.N. and Upadhye, N.S. (2017). On discrete Gibbs measure approximation to runs. Preprint https://arxiv.org/pdf/1701.03294


\bibitem{ley}Ley, C., Reinert, G., Swan, Y. (2017). Stein's method for comparison of univariate distributions. Probab. Surv. \textbf{14}, pp. 1-52. 



\bibitem{k5} Luk, H. (1994). Stein's Method for the Gamma Distribution and Related Statistical Applications. PhD thesis, University of Southern California.

\bibitem{nourdin} Nourdin, I. and Peccati, G. (2012). Normal approximations with Malliavin calculus. Cambridge University Press. Cambridge tracts in mathematics \textbf{192}.

\bibitem{nourdin0} Nourdin, I. and Peccati, G. (2010). Cumulants on the Wiener space. J. Funct. Anal. 258, pp. 3775-3791.

\bibitem{pike}Pike, J. and Ren, H. (2014). Stein's method and the Laplace distribution. ALEA Lat. Am. J. Probab. Math. Stat. $\bf{11}$, pp. 571–587.

\bibitem{k9}Rosinski, J. (2007). Tempering stable processes. Stochastic Processes and their Applications $\mathbf{117(6)}$, pp. 677–707.

\bibitem{k25}Ross, N. (2010). Fundamentals of Stein’s method. Probability Surveys $\mathbf{8}$, pp. 210-293.

\bibitem{sato} Sato, K.I. (1999). L\'evy Processes and Infinitely Divisible Distributions. Cambridge University Press, Cambridge.

\bibitem{slepov} Slepov, N.A. (2021). Convergence rate of random geometric sum distributions to the Laplace law. Theory Probab. Appl., \textbf{66(1)}, 121-141.

\bibitem{k2}Stein, C. (1972). A bound for the error in the normal approximation to the the distribution of a sum of dependent random variables. In Proceedings of the Sixth Berkeley Symposium on Mathematical Statistics and Probability, $\mathbf{2}$, Univ. California Press, Berkeley, pp. 583-602.

\bibitem{den}Stein, C., Diaconis, S, Holmes, S. and Reinert, G. (2004). Use of exchangeable pairs in the analysis of simulations. In Steins method: expository lectures and applications, volume 46 of IMS Lecture Notes Monogr.Ser., Inst. Math. Statist., Beachwood, OH , pp. 1-26.

\bibitem{stein} Stein, E.M. and Shakarchi, R. (2003). Fourier analysis. An introduction. Princeton Lectures in Analysis, 1. Princeton.

\bibitem{k10}Sztonyk, P. (2010). Estimates of tempered stable densities. Journal of Theoretical Probability $\mathbf{23(1)}$, pp. 127–147.



\bibitem{k23}Schoutens, W. (2001). Orthogonal polynomials in Stein’s method. Journal of Mathematical Analysis and Applications $\mathbf{253}$, pp. 515-531.




\bibitem{k1}Upadhye, N.S. and Barman, K. (2020). A unified approach to Stein's method for stable distributions. Preprint https://arxiv.org/pdf/2004.07593






































\bibitem{k20}Xu, L. (2019). Approximation of stable law in Wasserstein-1 distance by Stein’s method. The Annals of Applied Probability $\mathbf{29}$, No. 1, pp. 458-504.





\end{thebibliography}
\end{document}